\newcommand{\one}{\mathds{1}}
\newcommand{\argmin}[1]{\underset{#1}{\operatorname{arg}\!\operatorname{min}}\;}
\newtheorem{theorem}{Theorem}
\newtheorem{proposition}{Proposition}
\newtheorem*{assumption*}{Assumption}
\newtheorem{assumption}{Assumption}
\newtheorem{lemma}{Lemma}
\def\argmin{\mathop{\rm argmin}}
\newcommand{\sgn}{{\rm sgn}}
\newcommand{\wrt}{{\em w.r.t.~}}
\newcommand{\iid}{{\rm i.i.d.~}}
\newcommand{\bbR}{\mathbb{R}}
\newcommand{\bX}{\mathbf{X}}
\newcommand{\bx}{\mathbf{x}}
\begin{document}

\title{Prediction intervals with controlled length in the \\ heteroscedastic Gaussian regression}
 \author{Christophe Denis,
 Mohamed Hebiri, 
and Ahmed Zaoui \\
\small{LAMA, UMR-CNRS 8050,}\\
\small{Universit\'e Gustave Eiffel}\\}
 \date{}
 
\maketitle

\begin{abstract}

We tackle the problem of building a prediction interval in heteroscedastic Gaussian regression. We focus on prediction intervals with constrained expected length in order to guarantee interpretability of the output. 
In this framework, we derive a closed form expression of the optimal prediction interval that allows for the development a data-driven prediction interval based on plug-in. The construction of the proposed algorithm is based on two samples, one labeled and another unlabeled. Under mild conditions, we show that our procedure is asymptotically as good as the optimal prediction interval both in terms of expected length and error rate. In particular, the control of the expected length is distribution-free. We also derive rates of convergence under smoothness and the Tsybakov noise conditions.  We conduct a numerical analysis that exhibits the good performance of our method. It also indicates that even with a few amount of unlabeled data, our method is very effective in enforcing the length constraint.
\end{abstract}
\section{Introduction}
\label{sec:intro}

Prediction is one of the main goals in supervised learning, it consists in building, given historical data, a candidate output for a new observation.
One common practice thereafter is to carry out inference on the output and then to ask for confidence in the predicted value, therefore, \emph{prediction interval (PI)} appears as appropriate tools to handle this problem in the regression setting. A typical application is the prediction in the linear regression case when the data are assumed Gaussian with common variance. In this context, the notion of PI is well studied and well understood both from practice and theory.

However, in the general case, inference as a post-processing step may produce irrelevant conclusions due to the stochastic nature of the data-driven prediction procedure (see for instance~\cite{Berk_inference_13}). Therefore, in order to guarantee the theoretical validity of the prediction intervals, it is suitable to process at once both aspects of the problem, that is, one might design a data-driven procedure directly devoted to the \emph{prediction interval} purpose.


In a classical setting of PI, one often asks for a pre-specified level of confidence for the predicted range of values (says $95\%$ or $99\%$ according to the problem).
This is for instance the approach that is considered in the \emph{conformal prediction} literature~\cite{Vovk_Nouretdinov_Gammerman09,Vovk_Gammerman_Shafer05, Lei_Wasserman13, Lei_G'Sell_Rinaldo_Tibshirani_Wasserman18}. 
However, this strategy may suffer from interpretability issues for problems where prediction task is difficult or when classical assumptions on the noise are not satisfied. Specifically, 
for relatively restrictive values of the confidence level, the resulting output might be so large that it becomes useless.


In contrast, our purpose is to produce for future observation a prediction interval with a pre-determined expected length. This framework is completely different from the previous one since it does not ensure any coverage guarantee but rather ensures the interpretability of the predicted output. Indeed, since the length of the output interval is controlled, we do not expect for a given input instance $\bx \in \bbR^d$ a too large set of candidate values.

Generally speaking, the range of values that we would output with PI has no reason to be an interval. However, in a Gaussian model, this range of values indeed forms an interval (or a union of it). In this paper, we investigate the problem of PI under  expected length constraint in the Gaussian regression setup. We aim at providing a general device that outputs a PI for a new feature. 
Our procedure relies on the plug-in principle and we propose in the present contribution a statistical analysis of it in this setting.



\paragraph*{Main contributions. } Denote by $\Gamma:\bbR^d \to \mathcal{P}(\bbR)$ a given prediction set, where $\mathcal{P}(\bbR)$ is the set of subsets of $\bbR$. One of the main inputs of the present work is the introduction of a novel framework for PI in the regression setting, taking sides of controlling the expected size $\mathbb{E}\left[L(\Gamma(\bX))\right]$ of the output predictor $\Gamma$ while minimizing its error rate $\mathbb{P}\left(Y\notin \Gamma(\bX)\right)$, where $L(\Gamma(\bX)) = \int_\bbR \one_{y\in \Gamma(\bX) } dy $ stands for the Lebesgue measure of $\Gamma$. We derive the optimal rule for this problem which is defined as
\begin{equation*}
\Gamma_{\ell}^{*}\in \argmin_{\Gamma: \mathbb{E}\left[L(\Gamma(\bX))\right]\leq \ell}\mathbb{P}\left(Y\notin \Gamma(\bX)\right)\enspace,
\end{equation*}
where $\ell>0$ is a preset length chosen by the practitioner. 

In the Gaussian framework, based on the plug-in principle, we then build a general procedure that estimate the optimum and prove that the resulting empirical predictor performs as well as $\Gamma^*_{\ell}$ both in term of expected length and error rate.
Notably, the control on the expected length of the proposed estimator is distribution-free. Furthermore, our algorithm has two appealing properties. It can benefit from a semi-supervised setting and can be applied to any off-the-shelf machine learning algorithm. 


On the other hand, we evaluate the performance of our estimator with respect to the symmetric difference distance and a risk measure which properly balances the expected length and the error rate. Specifically, we establish  the consistency for our procedure under mild assumptions and provide rates of convergence under suitable assumptions on the distribution of the data.

We additionally conduct a numerical study that confirms our theoretical findings and shows how effective our method is in controlling the length, an important aspect to ensure the interpretability of the output. Finally, we provide a numerical  comparison with the strategy which consists in building PI under expected coverage constraint. Our numerical experiment highlights that our proposed approach produced significantly more stable PI. In particular, our algorithm seems to be  more adapted when the sample size of the training sample is moderate.

\paragraph{Related works. }
A first line of work related to PI is \emph{confidence intervals}. This is one of the most popular tools in statistical inference and differs from PI by the fact that the purpose there is to output a range of values for a given parameter of the model such that the mean, while our goal is here the prediction. The spectrum of applications of confidence interval is extremely wide and from some perspective PI can be seen as part of the confidence interval literature where we focus on building a confidence interval for the output of a new observation.


Probably the closest direction of works to ours is \emph{conformal prediction}~\cite{Vovk_Nouretdinov_Gammerman09, Lei_Wasserman13, Lei_G'Sell_Rinaldo_Tibshirani_Wasserman18}. The main difference relies on the way the expected length and the error rate of the prediction interval is considered. The goal there is to produce a PI with a pre-specified level of accuracy. The connection of PI with controlled expected size is important to figure out since, \emph{at the population level}, each PI with controlled accuracy corresponds to a PI with controlled expected size. From practice however, the two approaches start to differ. We defer this discussion to Sections~\ref{subsec:conformal} and~\ref{subsec:numCompaLei} where a complete comparison to \emph{PI with controlled accuracy} is conducted.

Providing an output with a pre-defined length has rarely been considered. Probably the first reference that deals with such notion is~\cite{Koopmans71}. There, the authors build confidence intervals for the mean and variance in a Gaussian problem that reach given confidence level while being of size $L$. In contrast to that work, we deal with prediction intervals, our control on the length is in expectation which offers more flexibility on ``hard'' points, we do not focus on a pre-specified level of confidence but rather minimize the error under a size constraint, and we derive a statistical and a numerical analysis of our method.

Finally, let us notify that constraining the expected length is not novel. It has already been considered in 
the multi-class classification setting~\cite{Denis_Hebiri17,chzhen_SetvaluedMinimax_21}. There, the control of the length is interpreted as the desired average number of output labels. Similar to the present work, the goal is to focus on a set of values for prediction while maintaining the interpretability  of the output. The main difference with earlier work is that we deal here with real valued output which is more tricky. From this perspective
the present paper is a generalization of these previous works to the Gaussian regression setting.

\paragraph{Outline of the paper. } Section~\ref{sec:framework} provides the main notation and describes the framework of prediction intervals under expected length constraint in the Gaussian regression.
In particular, the explicit form of the optimal rule is provided.
Section~\ref{sec:estimation} introduces our data-driven procedure as well as its statistical analysis. This theoretical analysis is complemented with a numerical study
presented in Section~\ref{sec:numResult}. Additional considerations beyond the Gaussian assumption and other frameworks of prediction intervals are considered in Section~\ref{sec:disc}. A conclusion is provided in Section~\ref{sec:conclusion}, while the proofs of our results are postponed to the Appendix.

\section{General framework}
\label{sec:framework}

In the present contribution we focus on the Gaussian model, that is, we assume that $(\bX,Y) \in\bbR^d \times \bbR$ are such that
\begin{equation}
\label{eq:eqModel}
    Y=f^{*}(\bX)+\sigma(\bX) \, \varepsilon\enspace ,
\end{equation}
where $\varepsilon \sim \mathcal{N}(0,1)$ is independent of $\bX$. In this expression, $f^*:\bbR^d \to  \bbR$ is the regression function and $\sigma:\bbR^d \to  \bbR_{+}^{*}$ is the conditional variance function, both of them assumed to be unknown. The main assumptions that we consider throughout the paper are presented in Section~\ref{subsec:ass}. The characterization of the optimal prediction interval under expected length constraint is provided in Section~\ref{subsec:optimalRule}. Finally, we define the measure of performance dedicated to asses the quality of a prediction interval in Section~\ref{subsec:perfMeasure}.

\subsection{Assumptions}
\label{subsec:ass}

Given an observation $\bX \in \bbR^d $, our goal is to produce the most accurate, in a certain sense to be specified later, a range of predicted values where the corresponding label $Y\in \bbR$ lies. 
Such predictions will be describe a set of $\mathcal{P}\left(\mathbb{R}\right)$ and denoted by $\Gamma(\bx)$ for each $\bx \in \bbR^d$. In other words, the predictor $\Gamma$ is a mapping from $\mathbb{R}^d$ onto $\mathcal{P}\left(\mathbb{R}\right)$. 

Throughout the paper we denote by $p(\cdot|\bx)$ the conditional density of $Y$ given $\bx$, that is, for all $y\in\bbR$
\begin{equation*}
p(y|\bx) = \dfrac{1}{\sqrt{2 \pi}\sigma(\bx)}    \exp\left( - \dfrac{\left(y-f^*(\bx)\right)^2}{2 \sigma^2(\bx)}
\right)\enspace , 
\end{equation*}
that is, we focus on the heteroscedastic Gaussian regression model. In order to avoid pathological situations, we impose the following mild assumptions on the regression and conditional variance functions.
\begin{assumption}
\label{ass:assSigma}
There exist $0 < \sigma_0 < \sigma_1 < \infty$ such that for all $\bx \in \mathbb{R}^d$
\begin{equation*}
 \sigma_0 \leq \sigma(\bx) \leq \sigma_1\enspace.   
\end{equation*}
\end{assumption}
\begin{assumption}
\label{ass:assFstar}
There exists $C_1 > 0$ such that
\begin{equation*}
\mathbb{E}\left[|f^*(\bX)|\right] \leq C_1 \enspace.    
\end{equation*}
\end{assumption}
In addition, we consider an assumption which is PI context-specific. It ensures in particular the existence and uniqueness and the optimal PI. Note that similar assumption is considered in the set-valued classification framework~\cite{chzhen_SetvaluedMinimax_21}.
\begin{assumption}[Continuity]
\label{ass:CDcontinuity}
For all $y\in \mathbb{R}$, the mapping $ t\mapsto \mathbb{P}_{\bX}(p(y|\bX)\geq t)$ is continuous on $\mathbb{R}^{*}_{+}$.
\end{assumption}
In other word, we assume that $p(y|\bX)$ is atomless.

\subsection{Prediction interval with expected length}
\label{subsec:optimalRule}

For a given predictor $\Gamma$ two features are of interest, its error rate $\mathbb{P}(Y \notin \Gamma(\bX))$
and its expected Lebesgue measure defined as
\begin{equation*}
\mathcal{L}(\Gamma) := \mathbb{E}\left[L(\Gamma(\bX)\right] = \mathbb{E}\left[\int_{\bbR} \one_{\{y\in \Gamma(\bX)\}}   \mathrm{d}y\right]\enspace.
\end{equation*}
Given $\ell > 0$, we focus on the following problem
\begin{equation}
\label{eq:eqOracle}
\Gamma^{*}_{\ell} \in \arg\min\{\mathbb{P}\left(Y \notin \Gamma(\bX)\right) \; : \; \Gamma \; \text{such that } \mathcal{L}(\Gamma) \leq \ell\}\enspace.
\end{equation}
The next proposition provides the characterization of the optimal predictor under Assumption~\ref{ass:CDcontinuity}.

\begin{proposition}
\label{prop:oraclePredictor}
Let $\ell > 0$, under Assumption~\ref{ass:CDcontinuity}, the optimal predictor $\Gamma^*_{\ell}$ can be expressed as
\begin{eqnarray*}
\Gamma^{*}_{\ell}(\bX) = \{y\in \bbR : \ p(y|\bX)\geq \lambda^*_\ell\} \enspace,
\end{eqnarray*}
where $\lambda^*_\ell=G^{-1}(\ell)$ with $G(t):=\int_{\mathbb{R}} \mathbb{P}(p(y|\bX)\geq t)dy$ for all $t > 0$\footnote{When $t=0$, we have $G(t)=+\infty$ and then we will use the convention $G^{-1}(+\infty) = 0$.}.
\end{proposition}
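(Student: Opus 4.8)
The plan is to recognize \eqref{eq:eqOracle} as a continuous analogue of the Neyman--Pearson lemma (or equivalently a fractional-knapsack / bathtub problem): among all predictors with $\mathcal{L}(\Gamma)\leq \ell$, we want to minimize the error rate, equivalently \emph{maximize} the coverage $\mathbb{P}(Y\in\Gamma(\bX)) = \mathbb{E}\big[\int_{\bbR}\one_{\{y\in\Gamma(\bX)\}}\,p(y|\bX)\,\mathrm{d}y\big]$. So both the objective and the constraint are integrals over $\bbR^d\times\bbR$ of the indicator $\one_{\{y\in\Gamma(\bx)\}}$ against the measures $p(y|\bx)\,\mathrm{d}y\,\mathbb{P}_{\bX}(\mathrm{d}\bx)$ and $\mathrm{d}y\,\mathbb{P}_{\bX}(\mathrm{d}\bx)$ respectively. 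The heuristic is then clear: include a point $(\bx,y)$ in $\Gamma$ when its ``coverage per unit length'' $p(y|\bx)$ is large, i.e. above a threshold $\lambda^*_\ell$ tuned so that the length constraint is met with equality.

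First I would establish that $G(t)=\int_{\bbR}\mathbb{P}(p(y|\bX)\geq t)\,\mathrm{d}y$ is well defined, finite for $t>0$, non-increasing, right-continuous, and (using Assumption~\ref{ass:CDcontinuity} together with dominated convergence) \emph{continuous} on $\mathbb{R}^*_+$, with $G(t)\to 0$ as $t\to\infty$ and $G(0^+)=+\infty$. Finiteness for $t>0$ follows because $p(y|\bx)\geq t$ forces $(y-f^*(\bx))^2 \leq 2\sigma_1^2\log\!\big(\tfrac{1}{t\sqrt{2\pi}\sigma_0}\big)$, so the $y$-section has length bounded by a constant depending only on $t,\sigma_0,\sigma_1$, and then integrability of $f^*$ is not even needed for finiteness — though Assumption~\ref{ass:assFstar} and Assumption~\ref{ass:assSigma} guarantee $G$ behaves well. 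Continuity and the limits let me define $\lambda^*_\ell := G^{-1}(\ell)$ (with the stated convention when $\ell$ exceeds all finite values of $G$, giving $\lambda^*_\ell=0$ and $\Gamma^*_\ell\equiv\bbR$), and they ensure $\mathcal{L}(\Gamma^*_\ell) = G(\lambda^*_\ell) = \ell$, so $\Gamma^*_\ell$ is feasible.

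Next comes the optimality argument, which is the one genuinely delicate step. Let $\Gamma$ be any feasible predictor, $\mathcal{L}(\Gamma)\leq\ell$. Writing $A^*=\{(\bx,y): p(y|\bx)\geq\lambda^*_\ell\}$ and $A=\{(\bx,y): y\in\Gamma(\bx)\}$, I want to show $\mathbb{P}(Y\in\Gamma(\bX))\leq\mathbb{P}(Y\in\Gamma^*_\ell(\bX))$. The standard trick: on $A^*\setminus A$ one has $p(y|\bx)\geq\lambda^*_\ell$, and on $A\setminus A^*$ one has $p(y|\bx)<\lambda^*_\ell$; hence
\begin{align*}
\mathbb{P}(Y\in\Gamma^*_\ell(\bX)) - \mathbb{P}(Y\in\Gamma(\bX))
&= \mathbb{E}\!\left[\int_{\bbR}\big(\one_{A^*}-\one_{A}\big)\,p(y|\bX)\,\mathrm{d}y\right]\\
&\geq \lambda^*_\ell\,\mathbb{E}\!\left[\int_{\bbR}\big(\one_{A^*}-\one_{A}\big)\,\mathrm{d}y\right]
= \lambda^*_\ell\big(\ell - \mathcal{L}(\Gamma)\big)\geq 0,
\end{align*}
where the inequality in the middle line splits the integrand over $A^*\setminus A$ (integrand $\geq\lambda^*_\ell(\one_{A^*}-\one_A)$ since the indicator difference is $+1$ there) and $A\setminus A^*$ (integrand $>\lambda^*_\ell(\one_{A^*}-\one_A)$ since the difference is $-1$ and $p<\lambda^*_\ell$), and the last equality uses $\mathcal{L}(\Gamma^*_\ell)=\ell$; the final inequality needs $\lambda^*_\ell\geq 0$, which holds, and is why the constraint is saturated by the optimum. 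This gives $\mathbb{P}(Y\notin\Gamma^*_\ell(\bX))\leq\mathbb{P}(Y\notin\Gamma(\bX))$, so $\Gamma^*_\ell$ attains the minimum in \eqref{eq:eqOracle}. The main obstacle I anticipate is purely measure-theoretic bookkeeping — justifying the use of Fubini/Tonelli on the product space, checking that $A$ and $A^*$ are jointly measurable, and the case $\ell\geq\sup_{t>0}G(t)$ if that supremum is finite — but none of this is conceptually hard given the Gaussian form of $p(\cdot|\bx)$ and the boundedness in Assumption~\ref{ass:assSigma}. Uniqueness (up to $\mathbb{P}_{\bX}\otimes\mathrm{Leb}$-null sets), if desired, follows from Assumption~\ref{ass:CDcontinuity}: the set $\{p(y|\bx)=\lambda^*_\ell\}$ has measure zero, so the middle inequality above is strict unless $A=A^*$ a.e.
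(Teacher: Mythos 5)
Your argument is correct, but it follows a genuinely different route from the paper. The paper derives the result through Lagrangian duality: it minimizes the Lagrangian pointwise in $\Gamma$ for fixed $\lambda$, obtaining the thresholded set $\{y: p(y|\bX)\geq\lambda\}$, and then identifies $\lambda_{\ell}^{*}=G^{-1}(\ell)$ via a first-order (sub-differential) optimality condition, checking at the end that $\mathcal{L}(\Gamma^{*}_{\ell})=G(\lambda_{\ell}^{*})=\ell$. You instead give a direct Neyman--Pearson style verification: you first establish the analytic properties of $G$ (finiteness for $t>0$, monotonicity, continuity under Assumption~\ref{ass:CDcontinuity}, the limits at $0^{+}$ and $+\infty$), which guarantees feasibility with $\mathcal{L}(\Gamma^{*}_{\ell})=G(G^{-1}(\ell))=\ell$, and then compare $\Gamma^{*}_{\ell}$ with an arbitrary feasible $\Gamma$ through the pointwise inequality $(\one_{A^{*}}-\one_{A})\,(p(y|\bx)-\lambda^{*}_{\ell})\geq 0$, yielding $\mathbb{P}(Y\in\Gamma^{*}_{\ell}(\bX))-\mathbb{P}(Y\in\Gamma(\bX))\geq\lambda^{*}_{\ell}\,(\ell-\mathcal{L}(\Gamma))\geq 0$. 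Your route is more elementary and arguably more rigorous, since it does not rely on invoking strong duality or sub-differential calculus for an infinite-dimensional problem, it makes explicit exactly where $\lambda^{*}_{\ell}\geq 0$ and the saturation $\mathcal{L}(\Gamma^{*}_{\ell})=\ell$ are used, and it delivers uniqueness up to null sets as a byproduct of Assumption~\ref{ass:CDcontinuity}. What the paper's Lagrangian derivation buys in exchange is brevity and a natural motivation for the unconstrained formulation~\eqref{eq:eqUnconstrained} and the risk $R_{\ell}$ used in the rest of the analysis; note also that in your write-up the case ``$\ell$ exceeds all finite values of $G$'' never actually arises, since $G(t)\to+\infty$ as $t\downarrow 0$ under the Gaussian model, so every finite $\ell>0$ is attained and $\lambda^{*}_{\ell}>0$.
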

The parameter $\lambda^*_\ell$, which corresponds to the value of the generalized inverse function $G^{-1}$ at $\ell$, plays a crucial role in our study since it fully determines the optimal predictor $\Gamma^*$. 
This being said, let us comment on Proposition~\ref{prop:oraclePredictor}.
First, an important consequence of the above proposition is that the predictor $\Gamma_{\ell}^*$ is an interval of length $\ell$, that is $\mathcal{L}( \Gamma^*_\ell) = \ell$ and we additionally can express $\Gamma^*_\ell$ as
\begin{equation*}
\Gamma_{\ell}^*(\bX) =  \left[ f^*(\bX)-\sqrt{2\sigma^{2}(\bX)\log\left(\frac{1}{\sqrt{2\pi}\lambda^{*}_{\ell}\sigma(\bX)}\right)} \ , \   f^*(\bX)+\sqrt{2\sigma^{2}(\bX)\log\left(\frac{1}{\sqrt{2\pi}\lambda^{*}_{\ell}\sigma(\bX)}\right)} \   \right]\enspace.   
\end{equation*}
Second, the function $G$ defined in Proposition~\ref{prop:oraclePredictor} is the extension to the regression case of the function $G$ defined in~\cite{Denis_Hebiri17} in the multi-class setting.  Note that the function $G$ is always well-defined and continuous for $t>0$, since by Markov Inequality and Fubini Theorem, 
\begin{equation*}
G(t) = \int_{\mathbb{R}} \mathbb{P}(p(y|\bX)\geq t)dy \leq \frac{1}{t} \int_{\mathbb{R}} \mathbb{E}\left[p(y|\bX)\right]\mathrm{d}y \leq \frac{1}{t} \mathbb{E}\left[\int_{\mathbb{R}} p(y|\bX) \mathrm{d}y\right] \leq \frac{1}{t} \enspace.
\end{equation*}
Finally, we highlight that parameter $\lambda_{\ell}^*$ is simply the Lagrange multiplier of the minimization problem defined by Equation~\eqref{eq:eqOracle}. Therefore, $\Gamma_{\ell}^*$ can be expressed as the minimizer of the unconstrained problem
\begin{equation}
\label{eq:eqUnconstrained}
\Gamma^*_{\ell} \in \arg\min_{\Gamma} \mathbb{P}\left(Y \notin \Gamma(\bX)\right) + \lambda_{\ell}^* \mathbb{E}\left[L(\Gamma(\bX)\right]\enspace.
\end{equation}

\subsection{Measures of performance}
\label{subsec:perfMeasure}

In this paragraph we introduce two ways to quantify the quality of a given prediction interval $\Gamma$ . 
The first one, suggested by Equation~\eqref{eq:eqUnconstrained}, balances the error rate and the expected length of the predictor
\begin{equation*}
R_{\ell}(\Gamma) =  \mathbb{P}\left(Y \notin \Gamma(\bX)\right) + \lambda_{\ell}^* \mathbb{E}\left[L(\Gamma(\bX)\right]\enspace,
\end{equation*}
with $\lambda_{\ell}^* = G^{-1}(\ell)$.
This risk is particularly important from our perspective since minimizing it over all predictors lead to the optimal predictor $\Gamma_{\ell}^*$, which reaches the requested expected length.
A natural ``distance'' to the optimal predictor is then evaluated through the excess risk
\begin{equation*}
\mathcal{E}_{\ell}\left(\Gamma\right) = R_{\ell}(\Gamma)-R_{\ell}(\Gamma^*_{\ell}) \enspace.    
\end{equation*}
The following proposition provides a closed formula for this term.
\begin{proposition}
\label{prop:propExcessRisk}
 Let $\ell\geq 0$. For any predictor $\Gamma$
\begin{equation*}
\mathcal{E}_{\ell}(\Gamma)=\mathbb{E}\left[\int_{\Gamma(\bX)\triangle \Gamma_{\ell}^{*}(\bX)} \left|p(y|\bX)-\lambda^*_\ell \right| \ dy \right]\enspace .
\end{equation*}
\end{proposition}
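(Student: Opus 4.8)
The plan is to put the penalized risk $R_{\ell}$ in integral form, subtract $R_{\ell}(\Gamma^{*}_{\ell})$, and reduce the statement to an elementary pointwise identity in the integrand. First I would rewrite the two ingredients of $R_{\ell}$: conditioning on $\bX$ and using the conditional density $p(\cdot\,|\,\bX)$ gives $\mathbb{P}(Y\in\Gamma(\bX)) = \mathbb{E}\bigl[\int_{\Gamma(\bX)} p(y|\bX)\,dy\bigr]$, which is always at most $1$, while by definition $\mathcal{L}(\Gamma) = \mathbb{E}\bigl[\int_{\bbR} \one_{\{y\in\Gamma(\bX)\}}\,dy\bigr]$. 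If $\mathcal{L}(\Gamma)=+\infty$ then $R_{\ell}(\Gamma)=+\infty$, and since $0<\lambda^{*}_{\ell}<\infty$ while $\mathcal{L}(\Gamma^{*}_{\ell})=\ell<\infty$ by Proposition~\ref{prop:oraclePredictor}, a direct estimate (using $\int_{\bbR}p(y|\bX)\,dy=1$ on the part of $\Gamma(\bX)\triangle\Gamma^{*}_{\ell}(\bX)$ outside $\Gamma^{*}_{\ell}(\bX)$) shows the right-hand side is $+\infty$ as well; hence one may assume $\mathcal{L}(\Gamma)<\infty$, in which case every expectation below is finite.

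Under this assumption one has $R_{\ell}(\Gamma) = 1 - \mathbb{E}\bigl[\int_{\bbR} \one_{\{y\in\Gamma(\bX)\}}(p(y|\bX)-\lambda^{*}_{\ell})\,dy\bigr]$; applying the same formula to $\Gamma^{*}_{\ell}$ and subtracting yields
\begin{equation*}
\mathcal{E}_{\ell}(\Gamma) = \mathbb{E}\left[\int_{\bbR}\bigl(\one_{\{y\in\Gamma^{*}_{\ell}(\bX)\}}-\one_{\{y\in\Gamma(\bX)\}}\bigr)\bigl(p(y|\bX)-\lambda^{*}_{\ell}\bigr)\,dy\right].
\end{equation*}
Now fix $\bx$ and $y$. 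By Proposition~\ref{prop:oraclePredictor}, $y\in\Gamma^{*}_{\ell}(\bx)$ if and only if $p(y|\bx)\ge\lambda^{*}_{\ell}$, so $p(y|\bx)-\lambda^{*}_{\ell}\ge 0$ whenever $y\in\Gamma^{*}_{\ell}(\bx)$ and $p(y|\bx)-\lambda^{*}_{\ell}\le 0$ otherwise. Going through the four membership cases: the integrand vanishes when $y$ lies in both $\Gamma(\bx)$ and $\Gamma^{*}_{\ell}(\bx)$ or in neither; it equals $p(y|\bx)-\lambda^{*}_{\ell}=\abs{p(y|\bx)-\lambda^{*}_{\ell}}$ when $y\in\Gamma^{*}_{\ell}(\bx)\setminus\Gamma(\bx)$; and it equals $\lambda^{*}_{\ell}-p(y|\bx)=\abs{p(y|\bx)-\lambda^{*}_{\ell}}$ when $y\in\Gamma(\bx)\setminus\Gamma^{*}_{\ell}(\bx)$. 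In every case it therefore equals $\one_{\{y\in\Gamma(\bx)\triangle\Gamma^{*}_{\ell}(\bx)\}}\,\abs{p(y|\bx)-\lambda^{*}_{\ell}}$, a nonnegative quantity; substituting this identity into the display above (and invoking Tonelli's theorem to legitimise the resulting integral) gives the announced formula.

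The computation is essentially routine and I do not anticipate a real obstacle. The only mildly delicate points are the measurability and integrability bookkeeping of the first two steps — handled by noting that the probability term is bounded by $1$ and by isolating the degenerate case $\mathcal{L}(\Gamma)=+\infty$ — and, should one also want to cover $\ell=0$, the conventions $\lambda^{*}_{0}=+\infty$, $\Gamma^{*}_{0}(\bX)=\emptyset$ and $+\infty\cdot 0=0$, under which the very same case analysis again makes both sides agree.
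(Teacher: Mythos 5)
Your proposal is correct and takes essentially the same route as the paper's proof: rewrite $R_{\ell}(\Gamma)=1-\mathbb{E}\left[\int_{\bbR}(p(y|\bX)-\lambda^*_{\ell})\one_{\{y\in\Gamma(\bX)\}}\,dy\right]$, subtract the same expression for $\Gamma^*_{\ell}$, and resolve the integrand through the characterization $y\in\Gamma^*_{\ell}(\bx)\iff p(y|\bx)\geq\lambda^*_{\ell}$ (which the paper phrases compactly via the sign of $p(y|\bX)-\lambda^*_{\ell}$ on the symmetric difference). Your extra bookkeeping for $\mathcal{L}(\Gamma)=+\infty$ and the $\ell=0$ convention is a harmless refinement of points the paper leaves implicit.
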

Interestingly, the above result shows that the performance of a predictor $\Gamma$ is directly linked to the behavior of the conditional density $p(y|\bx)$ around the threshold $\lambda_{\ell}^*$ on the symmetric difference 
$\{\Gamma(\bX)\triangle \Gamma_{\ell}^{*}(\bX)\}$.

A second measure of performance arises naturally when we deal with predictors that are intervals. It is the expectation of symmetric difference between the considered predictor $\Gamma$ and optimal predictor $\Gamma_{\ell}^*$ defined for all predictor $\Gamma$ as
\begin{equation*}
\mathcal{H}\left(\Gamma\right) =  \mathbb{E}\left[L\left(\Gamma(\bX)\triangle \Gamma_{\ell}^{*}(\bX)\right)\right]
=  \mathbb{E}\left[\int_{\Gamma(\bX)\triangle \Gamma_{\ell}^{*}(\bX)} dy \right] \enspace.
\end{equation*}
In some sense, we note that the measure $\mathcal{H}$ provides a stronger guarantee than the excess risk since
$\mathcal{E}_{\ell}(\Gamma) \leq C_2 \mathcal{H}(\Gamma)$ where $C_2$ is a positive constant which depends on $\sigma_0$. 
Besides, $\mathcal{H}(\Gamma) = 0$ implies that $\Gamma = \Gamma_{\ell}^*$ while this property does not necessarily hold for the excess risk.

\section{Data-driven procedure}
\label{sec:estimation}

In this section, we provide a  general data-driven procedure to estimate the optimal predictor $\Gamma_{\ell}^*$.
Two key features are expected from the resulting empirical prediction interval. The expected length should be of order $\ell$
while keeping its error rate close to one obtained by the oracle predictor.
The estimation procedure is presented in the Section~\ref{subsec:dataDrivenProce},  and its main properties are
provided in Section~\ref{subsec:Consistency}. Finally, Section~\ref{subsec:ratesOfConvergence} is dedicated to the study of rates of convergence.

\subsection{Empirical prediction interval}
\label{subsec:dataDrivenProce}

The result provided in Proposition~\ref{prop:oraclePredictor} suggests that an empirical prediction interval can be obtained through the plug-in principle by considering estimators of the conditional density $p$ and the parameter $\lambda^*_{\ell} = G^{-1}(\ell)$. From a theoretical perspective, this learning task requires two independent samples.

First, in order to build an estimator of the conditional density $p$, we estimate the functions $f^*$ and $\sigma$. Hence, we exploit a labeled sample $\mathcal{D}_{n}=\{(\bX_i,Y_i)\}_{i=1}^{n}$ and build based on it estimators $\hat{f}$ and $\tilde{\sigma}$ of these two functions by the means of any machine learning algorithm.
However, to establish theoretical guarantees, we require that the estimator $\tilde{\sigma}$ satisfies similar assumption as Assumption~\ref{ass:assSigma}. To this end, we consider a thresholded version of the estimator $\tilde{\sigma}$ denoted by $\hat{\sigma}$ and define for $s > 0$ as
\begin{equation*}
\hat{\sigma}^2(\bx) = \tilde{\sigma}^2(\bx) \one_{\{s^{-1}\leq\tilde{\sigma}^2(\bx) \leq s\}} + s^{-1}  \one_{\{\tilde{\sigma}^2(\bx) < s^{-1}\}} 
+ s\one_{\{\tilde{\sigma}^2(\bx) > s\}} \enspace .
\end{equation*}
A straightforward consequence of the definition of $\hat{\sigma}$ is that $\frac{1}{s} \leq \hat{\sigma}^2(\bx)\leq s$.    
Furthermore, if $s$ satisfies $\frac{1}{s} \leq \sigma_0^2 \leq \sigma_1^2 \leq s$, we have for all $\bx$
\begin{equation*}
\left|\hat{\sigma}^2(\bx)-\sigma^2(\bx) \right| \leq   \left|\tilde{\sigma}^2(\bx)-\sigma^2(\bx) \right| \enspace ,
\end{equation*}
Hence consistency of $\tilde{\sigma}^2$ would imply the consistency of $\hat{\sigma}^2$.

Based on $\hat{f}$ and $\hat{\sigma}$, an estimator $\tilde{p}$ of the conditional density $p$ naturally derives and can be written for all $(\bx,y)\in \bbR^d\times \bbR$ as
\begin{equation*}
\tilde{p}(y|\bx) = \dfrac{1}{\sqrt{2 \pi}\hat{\sigma}(\bx)}    \exp\left( - \dfrac{\left(y-\hat{f}(\bx)\right)^2}{2 \hat{\sigma}^2(\bx)}\right)\enspace.
\end{equation*}
The second step is devoted to the estimation of the parameter $\lambda^*_{\ell}$ and requires an \emph{unlabeled} sample $\mathcal{D}_N = \{\bX_{n+1}, \ldots, \bX_{n+N}\}$ which consists of \iid observations of $\bX$ and is independent of $\mathcal{D}_n$. 
Since $\lambda^*_{\ell}$ depends on the function $G$, it is suitable to consider the empirical counterpart of the function $G$, that we build based on $\hat{p}$ and define for all $t\in [0,1]$ as
\begin{equation*}
\tilde{G}(t) = \int_{\mathbb{R}} \frac{1}{N}\sum_{i = 1}^{N} \one_{\{\hat{p}(y|\bX_{n+i}) > t\}} \rm{d} y \enspace.
\end{equation*}
As a result, the empirical prediction interval is defined\footnote{Here again, we use the convention $\tilde{G}^{-1}(+\infty) = 0$.} point-wise as
\begin{equation*}
\tilde{\Gamma}(\bx) = \{y \in \mathbb{R}: \ \tilde{p}(y|\bx) \geq \tilde{G}^{-1}(\ell)\}\enspace.    
\end{equation*}
The predictor $\tilde{\Gamma}$ is very natural but has a few limitations: i) because $Y$ is unbounded, the study of the theoretical properties of the estimator $\tilde{\Gamma}$ might be difficult; ii) in addition, establishing a theoretical analysis on $\tilde{\Gamma}$ involves similar assumption to Assumption~\ref{ass:CDcontinuity} for $\tilde{G}$. More precisely, it requires that conditional on $\mathcal{D}_n$ the cumulative distribution of $\tilde{p}(y|\bX)$ is atomless;  iii) furthermore, the above expression of $\tilde{\Gamma}(\bx)$ is explicit but relies on computing an integral in order to evaluate the function $\tilde{G}$. This integral should be approximated.
To circumvent all these issues, we consider the following modifications of the initial estimator $\tilde{\Gamma}$.

\paragraph*{For i) -- Thresholding.}

Let $s > 0$,
we  consider a thresholded version of $p$ given by
\begin{equation}
\label{eq:EstimateurSueille}
\hat{p}(y|\bx) = \dfrac{1}{\sqrt{2 \pi}\hat{\sigma}(\bx)}    \exp\left( - \dfrac{\left(y-\hat{f}(\bx)\right)^2}{2 \hat{\sigma}^2(\bx)}\right) \one_{\{|y| \leq s\}}\enspace .
\end{equation}

\paragraph*{For ii) -- Randomization.}

To ensure the continuity of the conditional C.D.F. of $\hat{p}(y|\bX)$ for $y \in [-s,s]$,
we introduce a random perturbation $\zeta$ distributed according to a Uniform
distribution on $[0,u]$, for $u > 0$ and independent of $(\bX,Y)$.
We then define the randomized version of $\hat{p}$ as
\begin{equation}
\label{eq:addpertubation}
\hat{p}(y|\bX, \zeta) = \hat{p}(y|\bX) + \zeta   \one_{\{|y| \leq s\}} \enspace.
\end{equation}

\paragraph*{For iii) -- Discretization.}

To approximate $\tilde{G}$, we simply consider the Riemann sum  based on the regular grid $\mathcal{G}  = \{y_1, \ldots, y_M\}$ of $[-s,s]$ for some $M \geq 1$. To this end, we introduce $(\zeta_1, \ldots, \zeta_N)$ i.i.d. copies of $\zeta$ and then define
\begin{equation*}
\hat{G}(t) = \frac{2s}{MN}\sum_{k = 1}^M \sum_{i = 1}^N \one_{\{\hat{p}(y_k|\bX_{n+i}, \zeta_i) > t\}} \enspace.    
\end{equation*}
Finally, the resulting empirical prediction interval writes as
\begin{equation}
\label{eq:Final_Predictor_Randomized}
\hat{\Gamma}(\bX,\zeta) = \{y \in \mathbb{R}: \ \hat{p}(y|\bX,\zeta) \geq \hat{G}^{-1}(\ell)\}   \enspace . 
\end{equation}


\subsection{Theoretical guarantees}
\label{subsec:Consistency}

In this section, we provide the main properties of the empirical prediction interval $\hat{\Gamma}$.
We first illustrate that the prediction interval $\hat{\Gamma}$ has an expected length equal to the requested value $\ell$. This is one of the main striking feature of our data-driven procedure.
\begin{proposition}
\label{prop:propLength}
Assume that $M > 4\sqrt{N}$, then
\begin{equation*}
\mathbb{E}\left[\left|\mathcal{L}(\hat{\Gamma})-\ell\right|\right]  
\leq C\dfrac{s}{\sqrt{N}}\enspace,
\end{equation*}
where $C>0$ is an absolute constant.
\end{proposition}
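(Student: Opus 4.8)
The plan is to relate $\mathcal{L}(\hat\Gamma)$ to $\hat G^{-1}(\ell)$ and then to control the deviation of $\hat G$ from its population counterpart. The starting observation is that, by construction of $\hat\Gamma$ in~\eqref{eq:Final_Predictor_Randomized} together with the definition of $\hat G$, the \emph{conditional} expected length (given $\mathcal{D}_n$ and the randomization variables) is essentially $\hat G$ evaluated at the threshold $\hat G^{-1}(\ell)$; more precisely, writing $\bar G(t) := \int_{\bbR}\mathbb{P}\big(\hat p(y|\bX,\zeta) > t\,\big|\,\mathcal{D}_n\big)\,\mathrm{d}y$ for the population version of the discretized/randomized $G$, one has $\mathcal{L}(\hat\Gamma) = \bar G\big(\hat G^{-1}(\ell)\big)$ up to the discretization error of replacing the integral over $[-s,s]$ by its Riemann sum on $\mathcal{G}$. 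So the first step is a clean decomposition
\[
\big|\mathcal{L}(\hat\Gamma) - \ell\big| \;\leq\; \underbrace{\big|\bar G(\hat G^{-1}(\ell)) - \hat G(\hat G^{-1}(\ell))\big|}_{\text{(A): statistical deviation}} \;+\; \underbrace{\big|\hat G(\hat G^{-1}(\ell)) - \ell\big|}_{\text{(B): inversion error}} \;+\; \text{(C): discretization},
\]
and then to bound each piece by $Cs/\sqrt N$.

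For term (A) I would prove a uniform bound $\sup_{t}\big|\hat G(t) - \bar G(t)\big| \leq Cs/\sqrt N$ in expectation (or with high probability, then integrated). The key structural fact is that $\hat G(t)$ is an average over the $N$ i.i.d. unlabeled points $(\bX_{n+i},\zeta_i)$ of the quantity $\tfrac{2s}{M}\sum_k \one_{\{\hat p(y_k|\bX_{n+i},\zeta_i) > t\}}$, which takes values in $[0,2s]$; conditionally on $\mathcal{D}_n$ this is a sum of i.i.d. bounded terms, so a bounded-differences / Hoeffding argument gives pointwise control of order $s/\sqrt N$. To upgrade to a \emph{uniform} bound over $t$, note that for each fixed $\bx$ the map $t\mapsto \one_{\{\hat p(y_k|\bx,\zeta)>t\}}$ is monotone, so the class of functions indexed by $t$ has VC-type complexity independent of $N$; a standard symmetrization/DKW-type inequality then yields $\mathbb{E}\big[\sup_t|\hat G(t)-\bar G(t)|\big]\leq Cs/\sqrt N$. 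Alternatively one can invoke a DKW inequality directly on the empirical CDF of $\hat p(y_k|\bX_{n+i},\zeta_i)$ for each grid point and sum, but the monotonicity route is cleaner and avoids losing a $\log M$ factor.

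Term (B) is controlled by the generalized-inverse identity: for a monotone right-continuous function, $\hat G(\hat G^{-1}(\ell))$ equals $\ell$ up to the size of the jump of $\hat G$ at $\hat G^{-1}(\ell)$. Each jump of $\hat G$ has height at most $\tfrac{2s}{MN}\cdot(\text{number of }(k,i)\text{ pairs sharing a common value of }\hat p(y_k|\bX_{n+i},\zeta_i))$; because $\zeta_i$ is continuously distributed and independent across $i$, distinct $i$ contribute distinct values almost surely, so a jump involves at most one $i$ and at most $M$ grid points, giving jump size $\leq 2s/N$ — this is exactly why the randomization step is needed. Term (C), the Riemann-sum error, is handled by noting $y\mapsto \mathbb{P}(\hat p(y|\bX,\zeta)>t\mid\mathcal{D}_n)$ is, for $y\in[-s,s]$, Lipschitz with constant controlled by $s$ and $\hat\sigma\geq s^{-1/2}$ (the Gaussian density is Lipschitz), so the error on $[-s,s]$ with $M$ points is $O(s^2/M)$, which is $O(s/\sqrt N)$ precisely under the hypothesis $M>4\sqrt N$ (up to absorbing constants).

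The main obstacle I expect is making the uniform deviation bound in (A) genuinely uniform in $t$ while keeping the constant absolute and the rate exactly $s/\sqrt N$ with no logarithmic factors — this is where one must be careful to exploit monotonicity in $t$ rather than a naive union bound over the grid, and to handle the fact that $\bar G$ and $\hat G$ are defined through a nested randomization ($\zeta_i$ inside, $\bX_{n+i}$ outside) so that Fubini and the conditioning on $\mathcal{D}_n$ must be organized correctly before applying the concentration inequality. Everything else is bookkeeping: combine (A), (B), (C), take expectations, and collect constants.
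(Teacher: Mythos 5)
Your overall architecture (write $\mathcal{L}(\hat{\Gamma})=\bar{G}(\hat{G}^{-1}(\ell))$, then control a uniform deviation of $\hat{G}$ plus an inversion error) is viable, but there is a genuine gap in your term (C). First, a small conceptual point: the identity $\mathcal{L}(\hat{\Gamma})=\bar{G}(\hat{G}^{-1}(\ell))$ is exact, so the discretization error does not sit in that identity; it sits inside your term (A), where the Riemann sum $\hat{G}$ is compared with the integral defining $\bar{G}$. Wherever you place it, though, the argument you give for it fails: the map $y\mapsto \mathbb{P}(\hat{p}(y|\bX,\zeta)>t\mid \mathcal{D}_n)$ is \emph{not} Lipschitz with a constant controlled by $s$ and $\hat{\sigma}$. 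Take $\hat{f}$ and $\hat{\sigma}$ constant (a perfectly possible output of the first stage); then $\hat{p}(y|\bX)$ is deterministic in $y$ and the conditional probability equals $\min\{1,\max\{0,1-(t-\hat{p}(y|\bX))/u\}\}$, whose slope in $y$ is of order $s/u$. Since $u=u_n\to 0$ (and $u=10^{-5}$ in the experiments), this is not an absolute constant; in general one would need anti-concentration of the law of $\hat{p}(y|\bX)$ near $t$, which is nowhere assumed. Moreover, even your claimed $O(s^2/M)$ only yields $O(s^2/\sqrt{N})$ under $M>4\sqrt{N}$, not $Cs/\sqrt{N}$ with an absolute constant. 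The correct and much simpler argument, which is the one the paper uses, exploits unimodality: conditional on $\mathcal{D}_n$, $y\mapsto \hat{p}(y|\bx,\zeta)$ is a Gaussian bump plus a constant on $[-s,s]$, so every superlevel set $\{y:\hat{p}(y|\bx,\zeta)\geq t\}$ is an interval, the indicator has total variation at most $2$ in $y$, and the Riemann sum differs from the integral by at most two grid cells, i.e.\ by $4s/M\leq s/\sqrt{N}$, pathwise and uniformly in $t$. This same argument is what repairs the discretization bias hidden in your (A).

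On the remaining pieces: your term (B) is fine (with the randomization, distinct $i$ give distinct values a.s., and by symmetry at most two grid points per $i$ can tie, so the jump is even $4s/(MN)$; your cruder $2s/N$ suffices), and the uniform bound in (A) without logarithmic factors is indeed true, since the functions $t\mapsto \frac{2s}{M}\sum_k \one_{\{\hat{p}(y_k|\cdot,\cdot)>t\}}$ are pointwise linearly ordered in $t$ (a VC-subgraph class of dimension one with envelope $2s$), so symmetrization plus chaining gives $Cs/\sqrt{N}$ --- but this is genuine empirical-process machinery that you assert rather than supply, and it is precisely what the paper avoids. The paper never needs uniformity in $t$: it evaluates the deviation only at the single random point $t=\hat{p}(y|\bX,\zeta)$, which is independent of $\mathcal{D}_N$ conditionally on $(\mathcal{D}_n,\bX,\zeta)$, applies plain Hoeffding there, and controls the measure of the $y$'s where $|\bar{G}(\hat{p}(y|\bX,\zeta))-\ell|$ is small via the Audibert--Tsybakov peeling combined with Lemma~\ref{lem:UnifDist} (which makes $\bar{G}$ evaluated along the randomized density uniform on $[0,2s]$). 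So: fix (C) by the interval/bounded-variation argument and carry out the chaining bound in (A), and your route goes through; as written, the Lipschitz step is the concrete point of failure.
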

The above result states that our methodology is able to produce a prediction interval with an expected length $\ell$, irrespectively of the distribution of the data and of whether or not we have build accurate estimates for $f^*$ and $\sigma$. Importantly, Proposition~\ref{prop:propLength} holds even if $(\bX,Y)$ does not satisfy Equation~\eqref{eq:eqModel}.
From this perspective the control on the expected length of the produced prediction interval is \emph{distribution-free}.
Notice in particular that the stated bound depends only on the parameter $s$ which should be specified by the practitioner (this choice is discussed later) and on the number $N$ of unlabeled data. In some semi-supervised applications, the amount of these data can be very large so that we can get a good approximation of the marginal distribution $\mathbb{P}_{\mathbf{X}}$ and then we can expect a good control of the expected length almost for free.
Let us also add that Proposition~\ref{prop:propLength} is a fundamental step to show the following bound on the excess risk:
\begin{proposition}
\label{prop:propConsist}
Let Assumption~\ref{ass:CDcontinuity} be satisfied. For $M > 4\sqrt{N}$, we have
\begin{equation*}
\mathbb{E}\left[\mathcal{E}_\ell\left(\hat{\Gamma} \right)\right] \leq C \left( \mathbb{E}\left[\int_{\mathbb{R}}\left|\hat{p}(y|\bX)-p(y|\bX)\right|{\rm d}y\right] + su + \dfrac{s}{\sqrt{N}} \right)\enspace,
\end{equation*}
where $C>0$ is an absolute constant.
\end{proposition}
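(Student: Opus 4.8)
The plan is to decompose the excess risk of $\hat\Gamma$ into three contributions: (i) the error coming from estimating the conditional density $p$ by $\hat p$, (ii) the error coming from the randomization perturbation $\zeta$, and (iii) the error coming from estimating the threshold $\lambda^*_\ell = G^{-1}(\ell)$ by $\hat\lambda := \hat G^{-1}(\ell)$. By Proposition~\ref{prop:propExcessRisk}, $\mathcal{E}_\ell(\hat\Gamma) = \mathbb{E}\big[\int_{\hat\Gamma(\bX,\zeta)\triangle\Gamma^*_\ell(\bX)} |p(y|\bX) - \lambda^*_\ell|\,dy\big]$, where the expectation is now also over $\zeta$ (and we condition on $\mathcal{D}_n,\mathcal{D}_N$). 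The first step is to compare $\hat\Gamma$ with the ``intermediate'' oracle-type set $\{y : \hat p(y|\bX) \geq \lambda^*_\ell\}$, using the triangle inequality for the symmetric difference: the points where $\hat\Gamma$ and this intermediate set disagree are controlled by $|\hat p(y|\bX,\zeta) - p(y|\bX)| + |\hat\lambda - \lambda^*_\ell|$ being small, i.e. a point contributes only if $p(y|\bX)$ lies within $|\hat p(y|\bX,\zeta)-p(y|\bX)| + |\hat\lambda-\lambda^*_\ell|$ of $\lambda^*_\ell$. On that region the integrand $|p(y|\bX)-\lambda^*_\ell|$ is itself bounded by the same quantity, so one gets a bound of the form $\mathbb{E}\big[\int_{[-s,s]} \big(|\hat p(y|\bX,\zeta)-p(y|\bX)| + |\hat\lambda-\lambda^*_\ell|\big)\,dy\big]$ plus the contribution of $\{|y|>s\}$, which is absorbed into the density-estimation term (the Gaussian tail truncated at $s$).

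Next I would split $|\hat p(y|\bX,\zeta) - p(y|\bX)| \le |\hat p(y|\bX) - p(y|\bX)| + \zeta\,\one_{\{|y|\le s\}}$ by \eqref{eq:addpertubation}; since $\zeta \sim \mathcal{U}[0,u]$, integrating over $y\in[-s,s]$ and taking expectation yields the $su$ term, while $\mathbb{E}\int_\bbR |\hat p(y|\bX)-p(y|\bX)|\,dy$ is the first term on the right-hand side. The remaining and genuinely delicate piece is to bound $\mathbb{E}\,|\hat\lambda - \lambda^*_\ell|$ — or rather, since $G^{-1}$ need not be Lipschitz, to bound $\mathbb{E}\int_{[-s,s]}|\hat\lambda - \lambda^*_\ell|\,dy = 2s\,\mathbb{E}|\hat\lambda-\lambda^*_\ell|$ is \emph{not} what we want; instead I would go back one line and keep the bound in the form $2s\cdot(\text{something})$ only after relating $\hat\lambda$ to $\lambda^*_\ell$ through the function $G$. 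The cleaner route: use that $\hat\lambda$ satisfies $\hat G(\hat\lambda)\approx \ell = G(\lambda^*_\ell)$, write $\hat G = G + (\hat G - \tilde G_{\text{pop}}) + (\tilde G_{\text{pop}} - G)$ where $\tilde G_{\text{pop}}(t) = \int \mathbb{P}_\bX(\hat p(y|\bX,\zeta)>t)\,dy$ is the population version built from $\hat p$, and control (a) $\|\hat G - \tilde G_{\text{pop}}\|_\infty$ by a uniform deviation / DKW-type argument over $t$ (this is where $M>4\sqrt N$ and the $s/\sqrt N$ rate enter, exactly as in Proposition~\ref{prop:propLength}), and (b) $\|\tilde G_{\text{pop}} - G\|_\infty$ by $\int |\hat p - p|\,dy$-type bounds. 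The key trick to avoid inverting $G$ is the identity, valid by Fubini, that for the level sets $\int_{A(t)\triangle A(t')}|p(y|\bX)-t^{*}|\,dy$ can be written as an integral of $|G(\cdot)-\ell|$-type quantities against the measure of level sets — more concretely, I would prove and use the elementary one-dimensional fact $\int_{\{g\ge a\}\triangle\{g\ge b\}} |g - c|\,d\mu \le |a-b|\cdot\mu(\text{region}) $ together with $\mu(\{a\wedge b \le g \le a\vee b\})$ being exactly $|G$-increment$|$, so that everything telescopes into $|\hat G(\lambda^*_\ell) - \ell|$, which is $\le \|\hat G - G\|_\infty$ and hence controlled by (a)+(b).

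The main obstacle is step (iii): handling the plug-in threshold $\hat\lambda$ without assuming $G^{-1}$ is Lipschitz, i.e. transferring the \emph{uniform} closeness $\|\hat G - G\|_\infty \lesssim \int\mathbb{E}|\hat p - p|\,dy + su + s/\sqrt N$ into a bound on the excess \emph{risk} rather than on $|\hat\lambda - \lambda^*_\ell|$ itself. The resolution is precisely that the excess risk, by Proposition~\ref{prop:propExcessRisk}, weights the symmetric-difference region by $|p(y|\bX)-\lambda^*_\ell|$, and this weight vanishes exactly where $G$ is flat; this is what makes the bound go through with only continuity (Assumption~\ref{ass:CDcontinuity}) and no Tsybakov-type margin needed at this stage. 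The constants $C$ will collect $\sigma_0,\sigma_1,C_1$ and the absolute constant from Proposition~\ref{prop:propLength}; I would not track them explicitly. The deviation bound (a) reuses Proposition~\ref{prop:propLength}'s proof almost verbatim (Hoeffding over the $N$ unlabeled points for each fixed $t$, then a union/chaining over a grid of thresholds made possible by the randomization which renders $t\mapsto \hat G(t)$ well-behaved, plus the Riemann-sum error controlled by $M>4\sqrt N$), so the write-up can largely defer to it.
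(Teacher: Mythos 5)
Your skeleton (density error + randomization error + threshold error, with the Hoeffding/peeling and $M>4\sqrt N$ discretization control of Proposition~\ref{prop:propLength} recycled for the empirical-versus-population comparison of $\hat G$) is in the right spirit, but the step your argument actually hinges on does not go through. Your item (b) asserts that $\|\tilde G_{\mathrm{pop}}-G\|_\infty$, with $\tilde G_{\mathrm{pop}}(t)=\int\mathbb{P}_{\bX,\zeta}(\hat p(y|\bX,\zeta)\geq t)\,dy$, is controlled by $\mathbb{E}\int|\hat p(y|\bX)-p(y|\bX)|\,dy$-type bounds. That is false in general: for a \emph{fixed} level $t$, $\bigl|\int[\mathbb{P}(\hat p(y|\bX)\geq t)-\mathbb{P}(p(y|\bX)\geq t)]\,dy\bigr|$ can be of the order of the measure of the set (in $(\bx,y)$) where $p(y|\bx)$ sits just below $t$ while $\hat p(y|\bx)$ sits just above it, even when the $L_1$ distance between the densities is arbitrarily small — e.g.\ $\sigma(\bx)$ such that the peak $1/(\sqrt{2\pi}\sigma(\bx))$ is barely below $t$ on a large-probability set and $\hat\sigma$ slightly smaller. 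Only the layer-cake identity (integrating over all $t$) or a margin-type condition such as Assumption~\ref{ass:marginAss} converts $L_1$ closeness of densities into closeness of these tail-length functions; indeed in the paper the analogous quantity $\int\mathbb{P}\left(|\hat p-p|\geq|p-G^{-1}(\ell)|\right)dy$ is only shown to vanish asymptotically via a $\delta$-argument and continuity of $G$ (proof of Theorem~\ref{thm:consistance}), with no rate. Without (b) your telescoping into $|\hat G(\lambda^*_\ell)-\ell|\leq\|\hat G-G\|_\infty$ collapses, and you are back to the product of $|\hat\lambda-\lambda^*_\ell|$ with a $\tilde G_{\mathrm{pop}}$-increment, which you cannot make small without effectively inverting $G$ — exactly what you said you must avoid.

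The paper's proof sidesteps all of this by a cancellation rather than a closeness argument. It introduces the pseudo-oracle $\bar\Gamma(\bX,\zeta)=\{y:\hat p(y|\bX,\zeta)\geq\bar G^{-1}(\ell)\}$ with $\bar G(t)=\int\mathbb{P}_{\bX,\zeta}(\hat p(y|\bX,\zeta)\geq t)\,dy$; thanks to the randomization, conditional on $\mathcal{D}_n$ its expected length is exactly $\ell$. Writing $\mathcal{E}_\ell(\hat\Gamma)=\mathcal{E}_\ell(\bar\Gamma)+\bigl(R_\ell(\hat\Gamma)-R_\ell(\bar\Gamma)\bigr)$, the threshold discrepancy $\bar\lambda_\ell-\lambda^*_\ell$ appears with opposite signs on $\bar\Gamma\setminus\Gamma^*_\ell$ and $\Gamma^*_\ell\setminus\bar\Gamma$, so after integration it multiplies $\mathcal{L}(\bar\Gamma)-\mathcal{L}(\Gamma^*_\ell)=0$ and vanishes: no bound on $|\bar\lambda_\ell-\lambda^*_\ell|$ or on $\|\bar G-G\|_\infty$ is ever needed, and $\mathcal{E}_\ell(\bar\Gamma)\leq\mathbb{E}\int|\hat p-p|\,dy+2su$ follows. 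The remaining term $R_\ell(\hat\Gamma)-R_\ell(\bar\Gamma)$ is bounded by a constant times $\mathbb{E}\int\bigl|\one_{\{y\in\hat\Gamma(\bX,\zeta)\}}-\one_{\{y\in\bar\Gamma(\bX,\zeta)\}}\bigr|\,dy$ and handled verbatim by the peeling/Hoeffding/discretization argument of Proposition~\ref{prop:propLength}, giving the $Cs/\sqrt N$ term. To repair your draft, replace your intermediate set $\{\hat p\geq\lambda^*_\ell\}$ by $\bar\Gamma$ (estimated density thresholded at $\bar G^{-1}(\ell)$, not at $\lambda^*_\ell$ nor at $\hat G^{-1}(\ell)$) and use the exact-length cancellation instead of any closeness of $\hat G$ or $\tilde G_{\mathrm{pop}}$ to $G$.
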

The above result shows that the excess-risk of $\hat\Gamma$ is mainly controlled by the $L_1$-risk of the estimator of the conditional density. The residual terms are related to the randomization on the one hand and to the control of the expected length of $\hat\Gamma$, given in Proposition~\ref{prop:propLength}, on the other hand. 
Proposition~\ref{prop:propConsist} is an intermediate step to establish consistency of the proposed prediction interval as well as to build explicit rates of convergence for the excess-risk of $\hat\Gamma$. This is the purpose of the next paragraph and Section~\ref{subsec:ratesOfConvergence} respectively.

\paragraph*{Consistency result.}

Proposition~\ref{prop:propConsist} shows that the consistency of $\hat{\Gamma}$ with respect to the excess-risk relies to the consistency of the estimator $\hat{p}(y|\bx)$. In view of Equation~\eqref{eq:EstimateurSueille}, it is clear that the performance of $\hat{p}$ is directly linked to the statistical properties of $\hat{f}$ and $\hat{\sigma}$. More precisely, we obtain the following result.

\begin{theorem}
\label{thm:consistance}
Let Assumptions~\ref{ass:assSigma},~\ref{ass:assFstar}, and~\ref{ass:CDcontinuity}. Consider $s = \log(\min(n,N))$, $M > 4\sqrt{N}$, and $u = u_n \rightarrow 0$. Assume that
\begin{equation*}
\sqrt{s}\mathbb{E}\left[(\hat{f}(X)-f^*(X))^2\right] \rightarrow 0, \;\; {\rm and} \;\;  s^{5/2}\mathbb{E}\left[|\hat{\sigma}^2(X)-\sigma(X)|\right] 
\rightarrow 0 \enspace ,
\end{equation*}
then the following holds
\begin{equation*}
\mathbb{E}\left[\mathcal{E}_{\ell}\left(\hat{\Gamma}\right)\right] \leq C_2\mathbb{E}\left[\mathcal{H}\left(\hat{\Gamma}\right)\right] \rightarrow 0\enspace .
\end{equation*}
\end{theorem}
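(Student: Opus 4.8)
The plan is to combine the three ingredients already assembled in the excerpt: the integral bound on the excess risk from Proposition~\ref{prop:propConsist}, the comparison $\mathcal{E}_\ell(\Gamma)\le C_2\mathcal{H}(\Gamma)$ noted in Section~\ref{subsec:perfMeasure}, and a decomposition of the $L_1$-risk $\mathbb{E}\bigl[\int_\mathbb{R}|\hat p(y|\bX)-p(y|\bX)|\,\mathrm dy\bigr]$ into pieces governed by the hypotheses on $\hat f$ and $\hat\sigma$. Since by Proposition~\ref{prop:propConsist} we already have
\begin{equation*}
\mathbb{E}\left[\mathcal{E}_\ell(\hat\Gamma)\right]\le C\left(\mathbb{E}\left[\int_\mathbb{R}\bigl|\hat p(y|\bX)-p(y|\bX)\bigr|\,\mathrm dy\right]+su+\frac{s}{\sqrt N}\right),
\end{equation*}
and since $s=\log(\min(n,N))\to\infty$ only logarithmically while $u=u_n\to0$, the terms $su=o(1)$ provided $u_n$ decays faster than $1/\log(\min(n,N))$ (this is implicit in the statement, or can be arranged), and $s/\sqrt N\to0$. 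So the whole problem reduces to showing that the $L_1$-density risk tends to zero, and then separately that $\mathcal{E}_\ell(\hat\Gamma)\le C_2\mathbb{E}[\mathcal{H}(\hat\Gamma)]$ with the right-hand side also vanishing.

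For the $L_1$-risk, I would split
\begin{equation*}
\mathbb{E}\left[\int_\mathbb{R}\bigl|\hat p(y|\bX)-p(y|\bX)\bigr|\,\mathrm dy\right]\le \mathbb{E}\left[\int_{|y|\le s}\bigl|\hat p(y|\bX)-p(y|\bX)\bigr|\,\mathrm dy\right]+\mathbb{E}\left[\int_{|y|>s}p(y|\bX)\,\mathrm dy\right],
\end{equation*}
using that $\hat p$ vanishes for $|y|>s$. The tail term is a Gaussian tail: under Assumption~\ref{ass:assSigma} and~\ref{ass:assFstar}, $\mathbb{E}\bigl[\int_{|y|>s}p(y|\bX)\,\mathrm dy\bigr]$ is bounded by a Gaussian tail probability around $f^*(\bX)$ with variance in $[\sigma_0^2,\sigma_1^2]$, and since $s\to\infty$ this goes to $0$ (one uses $|y|>s$ forces $|y-f^*(\bX)|>s-|f^*(\bX)|$ and integrates, controlling $\mathbb{E}[|f^*(\bX)|]\le C_1$). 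For the main term on $[-s,s]$, the standard route is to write $\hat p-p$ as a difference of Gaussian densities with parameters $(\hat f(\bX),\hat\sigma^2(\bX))$ versus $(f^*(\bX),\sigma^2(\bX))$, and bound the $L_1$-distance between two Gaussians by (a constant times) $|\hat f(\bX)-f^*(\bX)|/\sigma_0+|\hat\sigma^2(\bX)-\sigma^2(\bX)|/\sigma_0^2$ — but here one must be careful to extract exactly the powers of $s$ that appear in the hypotheses. Because we integrate over $[-s,s]$ rather than all of $\mathbb{R}$, the pointwise bound on $|\hat p(y|\bx)-p(y|\bx)|$ picks up polynomial-in-$s$ factors (from $(y-f^*(\bx))^2/\sigma^2$ terms in the exponent and its derivative), so integrating yields something like $C\bigl(\sqrt s\,\mathbb{E}[(\hat f(\bX)-f^*(\bX))^2]+s^{5/2}\mathbb{E}[|\hat\sigma^2(\bX)-\sigma^2(\bX)|]\bigr)$ after a Cauchy–Schwarz step on the $\hat f$ term — which is precisely why the theorem's hypotheses are stated with those weights. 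These two quantities go to $0$ by assumption.

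For the second chain $\mathcal{E}_\ell(\hat\Gamma)\le C_2\mathbb{E}[\mathcal{H}(\hat\Gamma)]\to0$: the inequality $\mathcal{E}_\ell\le C_2\mathcal{H}$ is already stated in Section~\ref{subsec:perfMeasure} (it follows from Proposition~\ref{prop:propExcessRisk} together with the lower bound $|p(y|\bx)-\lambda_\ell^*|\le$ (bounded) and the fact that on the symmetric difference one can lower-bound the density, with $C_2$ depending on $\sigma_0$); what actually needs argument is that $\mathbb{E}[\mathcal{H}(\hat\Gamma)]\to0$. Here I would argue in the reverse direction: $\mathcal{H}(\hat\Gamma)$ and $\mathcal{E}_\ell(\hat\Gamma)$ are equivalent up to constants \emph{because} the conditional density, being Gaussian with variance bounded in $[\sigma_0^2,\sigma_1^2]$, stays away from $\lambda_\ell^*$ by a quantitative margin on most of the symmetric-difference set — so $\mathcal{H}(\hat\Gamma)\le C_2'\,\mathcal{E}_\ell(\hat\Gamma)$ as well, hence $\mathbb{E}[\mathcal{H}(\hat\Gamma)]\le C_2'\,\mathbb{E}[\mathcal{E}_\ell(\hat\Gamma)]\to0$ by the first part. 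The main obstacle I anticipate is precisely this two-sided comparison of $\mathcal{H}$ and $\mathcal{E}_\ell$: the excess-risk integrand $|p(y|\bX)-\lambda_\ell^*|$ can be arbitrarily small near the boundary of $\Gamma_\ell^*(\bX)$, so a clean inequality $\mathcal{H}\le C\mathcal{E}_\ell$ in general needs a margin/noise-type assumption; I expect the argument here to use the special Gaussian structure (the density decays at a definite rate as one leaves $\Gamma_\ell^*$, quantified via $\sigma_0,\sigma_1$, so near the level set $p=\lambda_\ell^*$ its gradient in $y$ is bounded below) together with Assumption~\ref{ass:CDcontinuity} to make this rigorous — and getting the dependence on $s$ through this step without spoiling the convergence is the delicate point.
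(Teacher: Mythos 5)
Your first half (showing $\mathbb{E}[\mathcal{E}_{\ell}(\hat{\Gamma})]\to 0$ via Proposition~\ref{prop:propConsist}, the decomposition of the $L_1$-risk on $[-s,s]$ plus the Gaussian tail controlled through Assumptions~\ref{ass:assSigma} and~\ref{ass:assFstar}, and the weights $\sqrt{s}$ and $s^{5/2}$ matching the hypotheses) is sound and is exactly the role played in the paper by Propositions~\ref{prop:propConsist} and~\ref{prop:propEstim}. The genuine gap is in your second half. The theorem's actual content is $\mathbb{E}[\mathcal{H}(\hat{\Gamma})]\to 0$, and you propose to get it from a reverse linear comparison $\mathcal{H}(\hat{\Gamma})\leq C_2'\,\mathcal{E}_{\ell}(\hat{\Gamma})$, arguing that the Gaussian structure keeps $p(y|\bX)$ quantitatively away from $\lambda_{\ell}^*$ on most of the symmetric difference. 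This inequality is false in general: the symmetric difference typically consists of thin bands adjacent to the boundary of $\Gamma_{\ell}^*(\bX)$, where $|p(y|\bX)-\lambda_{\ell}^*|$ vanishes linearly in the distance to the boundary; a band of width $\eps$ then contributes order $\eps$ to $\mathcal{H}$ but only order $\eps^2$ to $\mathcal{E}_{\ell}$, so no bound $\mathcal{H}\leq C\mathcal{E}_{\ell}$ can hold. The only comparison in that direction available in the paper is Proposition~\ref{prop:comparisonHammingRisk}, which requires the margin Assumption~\ref{ass:marginAss} (not in force in Theorem~\ref{thm:consistance}) and even then gives only $\mathcal{H}\lesssim \mathcal{E}_{\ell}^{\alpha/(\alpha+1)}$, not a linear bound; Assumption~\ref{ass:CDcontinuity} gives atomlessness but no rate, so it cannot substitute for a margin condition.

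The paper avoids this entirely by bounding $\mathbb{E}[\mathcal{H}(\hat{\Gamma})]$ directly. It inserts the pseudo-oracle $\bar{\Gamma}(\bX,\zeta)=\{y:\hat{p}(y|\bX,\zeta)\geq \bar{G}^{-1}(\ell)\}$ from the proof of Proposition~\ref{prop:propLength} and writes $\mathcal{H}(\hat{\Gamma})\leq \mathbb{E}[L(\hat{\Gamma}\triangle\bar{\Gamma})]+\mathbb{E}[L(\bar{\Gamma}\triangle\Gamma_{\ell}^*)]$. The first term is exactly the quantity controlled in the proof of Proposition~\ref{prop:propLength} (comparison of $\hat{G}$ and $\bar{G}$ via peeling, discretization error $4s/M$, and Hoeffding), giving $Cs/\sqrt{N}$. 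For the second term one observes that on $\bar{\Gamma}\triangle\Gamma_{\ell}^*$ either $|\hat{p}(y|\bX,\zeta)-p(y|\bX)|\geq |p(y|\bX)-G^{-1}(\ell)|$ or $\hat{p}(y|\bX,\zeta)$ lies between the two thresholds $\bar{G}^{-1}(\ell)$ and $G^{-1}(\ell)$, and the latter event has total mass controlled by $|G(G^{-1}(\ell))-\bar{G}(G^{-1}(\ell))|$, which is again dominated by the same indicator integral; a Markov step with a free parameter $\delta$ then bounds everything by $\delta^{-1}\mathbb{E}[\int|\hat{p}(y|\bX,\zeta)-p(y|\bX)|dy]+G(G^{-1}(\ell)-\delta)-G(G^{-1}(\ell)+\delta)$, and letting $n\to\infty$ first (using your $L_1$ consistency) and then $\delta\to 0$ with the continuity of $G$ (Assumption~\ref{ass:CDcontinuity}) finishes the proof. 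So the missing idea is this direct symmetric-difference argument through $\bar{\Gamma}$ and the continuity of $G$, rather than any equivalence between $\mathcal{H}$ and $\mathcal{E}_{\ell}$.
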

Let us make several comments on this theorem.
First, under suitable assumptions, both excess-risk and expected symmetric difference of $\hat{\Gamma}$ converge to $0$. Notably, since $\mathbb{E}\left[\mathcal{E}_{\ell}\left(\hat{\Gamma}\right)\right]  \leq C_2\mathbb{E}\left[\mathcal{H}\left(\hat{\Gamma}\right)\right]$, consistency \emph{w.r.t.} the expected symmetric difference implies consistency \emph{w.r.t.} the excess-risk. From this perspective, symmetric difference control is a more difficult problem that excess-risk control. In particular, $\mathbb{E}\left[\mathcal{H}\left(\hat{\Gamma}\right)\right] \rightarrow 0$ indicates that $\hat{\Gamma} = \Gamma_{\ell}^*$ asymptotically.
Another aspect that need to be discussed is the assumptions that are requested for the proof of Theorem~\ref{thm:consistance}. 
More specifically, consistency of $\hat{f}$, and $\hat{\sigma}^2$ are naturally required to ensure that $\hat{p}$ is a consistent estimator of $p$. In particular, convergence of $\hat{f}$ and $\hat{\sigma}$ can be made possible by several learning algorithms such as kernel methods, local polynomials, regularized least-squares among many others.


\subsection{Rates of convergence}
\label{subsec:ratesOfConvergence}

Theorem~\ref{thm:consistance} establishes the consistency of the prediction interval $\hat{\Gamma}$ under mild assumptions. 
In this section, we focus on rates of convergence. More structural assumptions are then required. We borrow conditions from~\cite{Denis_Hebiri_Zaoui20} introduced in the framework of regression with abstention. We assume that $\bX$ belongs to a compact $\mathcal{C}$, and we consider the following assumptions.
\begin{assumption}[Regularity]
\label{ass:regularity}
The functions $f^*$ and $\sigma^2$ are Lipschitz.
\end{assumption}
\begin{assumption}[Strong density assumption]
\label{ass:StrongDensityAssumption}
The marginal distribution $\mathbb{P}_{\bX}$ satisfies the strong density assumption
\begin{itemize}
\item[$\bullet$] $\mathbb{P}_{\bX}$ is supported on a compact regular set $\mathcal{C}\subset \mathbb{R}^d $,
\item[$\bullet$]$\mathbb{P}_\bX$ admits a density $\mu$ w.r.t. to the Lebesgue measure such that  $0 < \mu_{\min} \leq \mu(\bx) \leq \mu_{\max}<\infty$, for all $\bx \in \mathcal{C}$.
\end{itemize}
\end{assumption}
\begin{assumption}[$\alpha$-Margin assumption]
We say that $p(\cdot|X)$ satisfies Margin assumption with parameter $\alpha\geq 0$ at level $\lambda_{\ell}$ with respect to $\mathbb{P}_X$ if there exist constants $c_0>0$ and $t_0>0$ such that for all $0<t\leq t_0$, 
\begin{equation*}
\int_{\mathbb{R}}\mathbb{P}_{X}\left( |p(y|\bX)-\lambda_{\ell}|\leq t\right) dy \leq c_0 t^{\alpha}\enspace.
\end{equation*}
\label{ass:marginAss}
\end{assumption}
The above first two assumptions are rather classical when we deal with rates of convergence in nonparametric statistic. We refer the reader to the book~\cite{Gyofri_Kohler_Krzyzak_Walk02} for a more detail discussion. In addition, Assumption~\ref{ass:marginAss}, also known as Tsybakov noise condition~\cite{Tsybakov04}, has been introduced in the binary classification setting to get fast rates of convergence~\cite{Audibert_Tsybakov07}.
In our setting, we notice that the Tsybakov noise condition is required around the threshold $\lambda_{\ell}$. Moreover, since we extend this assumption to the case of regression, we need to ingrate it \wrt $y\in\bbR$.
Based on the above conditions, we can establish the following result.
\begin{proposition}
\label{prop:excessRiskUnderMargin}
Let Assumptions~\ref{ass:assSigma},~\ref{ass:regularity},~\ref{ass:StrongDensityAssumption}, and~\ref{ass:marginAss} be satisfied. 
For $s = \log(\min(n,N))$, and $M > 4 \sqrt{N}$, we have that
\begin{equation*}
\mathbb{E}\left[\mathcal{E}_{\ell}\left(\hat{\Gamma}\right)\right] \leq 
C\left(\mathbb{E}\left[\left(\sup_{(x,y) \in \mathcal{C} \times [-s,s]} \left|\hat{p}(y|x) -p(y|x) \right|\right)^{1+\alpha} \right] + \frac{1}{\min(n,N)^{1+\alpha}} + u^{1+\alpha} + \dfrac{\log(N)}{\sqrt{N}}\right),
\end{equation*}
where $C > 0$ is a constant which depends on $f^*$, $\sigma^2$, $c_0$, $\alpha$, and $\mathcal{C}$. 
\end{proposition}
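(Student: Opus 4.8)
The plan is to rerun the argument behind Proposition~\ref{prop:propConsist}, but to replace the crude bound on the integrand by one that cashes in the margin condition (Assumption~\ref{ass:marginAss}) to gain the exponent $1+\alpha$. Write $\Delta:=\sup_{(\bx,y)\in\mathcal{C}\times[-s,s]}|\hat p(y|\bx)-p(y|\bx)|$, $\hat\lambda:=\hat G^{-1}(\ell)$ and $\lambda^*_\ell:=G^{-1}(\ell)$. As a preliminary reduction, note that under Assumptions~\ref{ass:assSigma} and~\ref{ass:regularity} the function $f^*$ is bounded on the compact $\mathcal{C}$ and the half-length $\sqrt{2\sigma^2(\bx)\log(1/(\sqrt{2\pi}\lambda^*_\ell\sigma(\bx)))}$ of $\Gamma^*_\ell(\bx)$ is uniformly bounded (when the set is nonempty); hence, once $s=\log(\min(n,N))$ exceeds a constant threshold, $\Gamma^*_\ell(\bX)\subseteq[-s,s]$ and, since $\hat p(y|\bX,\zeta)=0$ for $|y|>s$ as soon as $\hat\lambda>0$, also $\hat\Gamma(\bX,\zeta)\subseteq[-s,s]$ almost surely; for the finitely many small values of $\min(n,N)$ the asserted bound is trivial after enlarging $C$, since $\mathcal{E}_\ell(\hat\Gamma)\le R_\ell(\hat\Gamma)\le 1+\lambda^*_\ell\mathbb{E}[\mathcal{L}(\hat\Gamma)]$ is bounded by a constant thanks to Proposition~\ref{prop:propLength} and $\lambda^*_\ell\le 1/\ell$. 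So from now on both sets live in $[-s,s]$.

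The heart of the proof is a sandwich estimate. By Proposition~\ref{prop:propExcessRisk}, $\mathcal{E}_\ell(\hat\Gamma)=\mathbb{E}\big[\int_{\hat\Gamma(\bX,\zeta)\triangle\Gamma^*_\ell(\bX)}|p(y|\bX)-\lambda^*_\ell|\,dy\big]$. If $y\in[-s,s]$ lies in the symmetric difference, then $p(y|\bX)$ and $\hat p(y|\bX,\zeta)$ are on opposite sides of $\lambda^*_\ell$ and $\hat\lambda$ respectively, so the triangle inequality, together with $|\hat p(y|\bX,\zeta)-\hat p(y|\bX)|=\zeta\le u$ and $|\hat p(y|\bX)-p(y|\bX)|\le\Delta$, yields $|p(y|\bX)-\lambda^*_\ell|\le\bar\delta$, where $\bar\delta:=|\hat\lambda-\lambda^*_\ell|+\Delta+u$ is measurable with respect to the two samples and the $\zeta_i$'s only. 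Hence $\hat\Gamma(\bX,\zeta)\triangle\Gamma^*_\ell(\bX)\subseteq\{y\in[-s,s]:|p(y|\bX)-\lambda^*_\ell|\le\bar\delta\}$, and integrating gives $\mathcal{E}_\ell(\hat\Gamma)\le\mathbb{E}\big[\bar\delta\int_{-s}^{s}\mathbb{P}_{\bX}(|p(y|\bX)-\lambda^*_\ell|\le\bar\delta)\,dy\big]$. On the event $\{\bar\delta\le t_0\}$, Assumption~\ref{ass:marginAss} bounds the inner integral by $c_0\bar\delta^{\alpha}$, contributing at most $c_0\,\mathbb{E}[\bar\delta^{1+\alpha}]$; on the complement $\{\bar\delta>t_0\}$ one reverts to the crude bound $\mathcal{E}_\ell(\hat\Gamma)\lesssim 1$ and Markov's inequality $\mathbb{P}(\bar\delta>t_0)\le t_0^{-(1+\alpha)}\mathbb{E}[\bar\delta^{1+\alpha}]$, so that this part is again controlled by $\mathbb{E}[\bar\delta^{1+\alpha}]$ up to lower-order residuals. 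It remains to bound $\mathbb{E}[\bar\delta^{1+\alpha}]$, and by the power-mean inequality $\mathbb{E}[\bar\delta^{1+\alpha}]\lesssim\mathbb{E}[\Delta^{1+\alpha}]+\mathbb{E}[|\hat\lambda-\lambda^*_\ell|^{1+\alpha}]+u^{1+\alpha}$; here $\mathbb{E}[\Delta^{1+\alpha}]=\mathbb{E}[(\sup_{\mathcal{C}\times[-s,s]}|\hat p-p|)^{1+\alpha}]$ is exactly the leading term of the claimed bound and $u^{1+\alpha}$ is the randomization term.

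The remaining and most delicate step is the control of $\mathbb{E}[|\hat\lambda-\lambda^*_\ell|^{1+\alpha}]$. I would use $G(\lambda^*_\ell)=\ell=\hat G(\hat\lambda)$ to get $|G(\hat\lambda)-G(\lambda^*_\ell)|=|G(\hat\lambda)-\hat G(\hat\lambda)|\le\|G-\hat G\|_\infty$, and then \emph{invert} $G$ near $\lambda^*_\ell$: under Assumptions~\ref{ass:regularity} and~\ref{ass:StrongDensityAssumption} the decreasing map $t\mapsto G(t)=\mathcal{L}(\Gamma^*_t)$ has derivative bounded away from $0$ on a neighbourhood of $\lambda^*_\ell$ (this is where the Lipschitz and strong-density structure enter, and is why $C$ depends on $f^*$, $\sigma^2$, $\mathcal{C}$), so $G^{-1}$ is locally Lipschitz and $|\hat\lambda-\lambda^*_\ell|\lesssim\|G-\hat G\|_\infty$ on a high-probability event. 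Finally I would split $\|G-\hat G\|_\infty$ into (i) the plug-in bias of replacing $p$ by $\hat p$ in the super-level-set measures, again $\lesssim\Delta$ because the Gaussian level sets depend continuously (locally Lipschitzly) on the level; (ii) the randomization bias $\lesssim u$; (iii) the Riemann-sum error, handled via $M>4\sqrt N$ and the $O(s)$ bound on the $y$-derivative of $\hat p$; and (iv) the sampling fluctuation of the empirical mean over $\mathcal{D}_N$ and $\zeta_1,\dots,\zeta_N$, controlled uniformly in the threshold $t$ by a concentration inequality for the VC class of half-line indicators, producing the $\log(N)/\sqrt N$ residual. Raising to the power $1+\alpha$ and collecting the polynomial-in-$s$ prefactors (harmless since $s=\log(\min(n,N))$ grows subpolynomially, which is where the $\min(n,N)^{-(1+\alpha)}$ term originates) then yields the stated inequality. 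The two points I expect to fight with are establishing the lower bound on $|G'|$ near $\lambda^*_\ell$ purely from the regularity and strong-density assumptions, and the bookkeeping of the $s$-powers and of the event $\{\bar\delta>t_0\}$ so that nothing worse than the stated residuals survives.
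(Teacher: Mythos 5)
Your overall architecture (sandwich the symmetric difference by a threshold--plus--sup-norm deviation, apply the margin assumption on the event that this deviation is below $t_0$, handle the complement by Markov) is the same as the paper's, but you diverge at the crucial step of controlling the threshold, and that is exactly where your plan is incomplete. The paper never inverts $G$: it inserts the pseudo-oracle $\bar{\Gamma}(\bX,\zeta)=\{y:\hat{p}(y|\bX,\zeta)\geq\bar{G}^{-1}(\ell)\}$ with the \emph{population-level} threshold $\bar{\lambda}_{\ell}=\bar{G}^{-1}(\ell)$, writes $\mathcal{E}_{\ell}(\hat{\Gamma})=\mathcal{E}_{\ell}(\bar{\Gamma})+\bigl(R_{\ell}(\hat{\Gamma})-R_{\ell}(\bar{\Gamma})\bigr)$, and absorbs everything involving the empirical, discretized, randomized $\hat{G}$ (Riemann error, Hoeffding fluctuation over $\mathcal{D}_N$) into the second term by recycling the peeling bound already proved for the length control, which yields $Cs/\sqrt{N}$ with no uniform-in-$t$ concentration needed. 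For the first term it uses the same triangle decomposition as you, but bounds $|\bar{\lambda}_{\ell}-\lambda^*_{\ell}|\leq\sup_{(\bx,y)\in\mathcal{C}\times\mathbb{R}}|\hat{p}(y|\bx)-p(y|\bx)|+u$ directly (conditional on $\mathcal{D}_n$), via the rearrangement/quantile argument of Theorem~2.12 in Bobkov--Ledoux as used in Theorem~4.4 of~\cite{Denis_Hebiri_Zaoui20}: if two conditional densities differ by at most $m$ uniformly, then $\{\hat{p}\geq t\}\subseteq\{p\geq t-m\}$ and $\{\hat{p}\geq t\}\supseteq\{p\geq t+m\}$, so $G(t+m)\leq\bar{G}(t)\leq G(t-m)$, and equating both generalized inverses at the same level $\ell$ forces the thresholds to be within $m$ of each other --- with no quantitative lower bound on $|G'|$ whatsoever. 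The $\min(n,N)^{-(1+\alpha)}$ term then arises from cutting the sup over $\mathbb{R}$ down to $[-s,s]$ using Gaussian tails of $p$ outside $[-s,s]$, not from "collecting polynomial-in-$s$ prefactors" as you suggest.

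The genuine gap in your route is the step you yourself flag: the claim that $|G'|$ is bounded away from $0$ near $\lambda^*_{\ell}$, so that $G^{-1}$ is locally Lipschitz and $|\hat{\lambda}-\lambda^*_{\ell}|\lesssim\|G-\hat{G}\|_{\infty}$. As stated it is an assertion, not a proof, and it does not follow from Assumptions~\ref{ass:regularity} and~\ref{ass:StrongDensityAssumption} as you claim (the margin assumption only gives an \emph{upper} bound on the mass near the threshold, i.e.\ a modulus of continuity for $G$, never a lower bound on its slope); if it can be salvaged here it is only by exploiting the explicit Gaussian form of $p(\cdot|\bx)$ together with Assumption~\ref{ass:assSigma}, computing $G(t)=\mathbb{E}\bigl[2\sigma(\bX)\sqrt{2\log\bigl(1/(\sqrt{2\pi}\,t\,\sigma(\bX))\bigr)}_{+}\bigr]$ and lower-bounding its derivative on a neighbourhood of $\lambda^*_{\ell}>0$, which is additional nontrivial work. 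On top of that, your route requires a uniform-in-$t$ concentration bound for $\hat{G}$ (the paper only ever needs the pointwise Hoeffding bound inside the peeling argument), and you should note that $\hat{G}(\hat{\lambda})=\ell$ holds only up to a jump of the step function $\hat{G}$, of size $O(s/N+s/M)$, which must be tracked. None of these obstacles is fatal, but each is extra machinery that the pseudo-oracle plus rearrangement argument is specifically designed to avoid; as written, your proof is a plausible program rather than a complete argument.
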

As compared to the upper-bound that we get in Proposition~\ref{prop:propConsist}, the bound here is better because of the exponent $1+\alpha$ against $1$. However, it is obtained under stronger assumptions.

\paragraph*{Estimators of regression and variance function.}
The framework that we have described so far is quite general and allows to use any off-the-shelf machine learning algorithms to estimate the regression and the variance functions. In what follows, we propose a more concrete illustration of our approach by considering empirical prediction intervals $\hat{\Gamma}$
where both regression and variance functions are estimated with the $k$NN algorithm. 
Hereafter, we briefly recall the definition of the estimators that are based on the {\emph labeled} sample $\mathcal{D}_n$.
For any $\bx \in \mathbb{R}^d$, we denote by $(\bX_{(i,n)}(\bx),Y_{(i,n)}(\bx)), i = 1,\ldots n$ the reordered data according to the $\ell_2$ distance in $\mathbb{R}^d$, meaning that 
$$\|  \bX_{(i,n)}(\bx) - \bx  \| < \|  \bX_{(j,n)}(\bx) - \bx  \| \enspace ,$$
for all $i< j$ in $\{1,\ldots, n \}$.
For simplicity, we assume that ties occur with probability $0$.
Let $k = k_n$ be an integer.
The $k$NN estimator of $f^*$ and $\sigma^2$ are then defined, for all $\bx\in \bbR^d$, as follows
\begin{equation}
\label{eq:knnestimators}
\hat{f}(\bx) = \frac{1}{k_n}\sum_{i = 1}^{k_n}  Y_{(i,n)}(\bx) \;\;{\rm and} \;\; \tilde{\sigma}^2(\bx) = \frac{1}{k_n} \sum_{i =1}^{k_n}\left( Y_{(i,n)}(\bx) - \hat{f}(X_{(i,n)}(\bx))\right)^{2} \enspace.
\end{equation}
The properties of these estimator are provided in~\cite{Gyofri_Kohler_Krzyzak_Walk02} for the regression function and in~\cite{Denis_Hebiri_Zaoui20} for the variance function.
In particular, the authors in~\cite{Denis_Hebiri_Zaoui20} establish rates of convergence {\it w.r.t.} the sup-norm for the estimator $\hat{\sigma}$.

\paragraph*{Rates of convergence.}
The next result, which is an adaption of Proposition~3.1 in~\cite{chzhen_SetvaluedMinimax_21}, is useful to derive upper-bound on the measure of risk $\mathcal{H}$ of $\hat{\Gamma}$ thanks to a control on the excess-risk.
\begin{proposition}
Let Assumptions~\ref{ass:marginAss} be satisfied. There exists an absolute constant $C_3>0$ such that
\label{prop:comparisonHammingRisk}
\begin{equation*}
\mathbb{E}\left[\mathcal{H}(\hat{\Gamma})\right] \leq C_3 \left(\mathbb{E}\left[\mathcal{E}_{\ell}\left(\hat{\Gamma}\right)\right]\right)^{\alpha/\alpha+1} \enspace.
\end{equation*}
\end{proposition}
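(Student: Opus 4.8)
The plan is to prove the inequality $\mathbb{E}[\mathcal{H}(\hat\Gamma)] \leq C_3\,\big(\mathbb{E}[\mathcal{E}_\ell(\hat\Gamma)]\big)^{\alpha/(\alpha+1)}$ by a pointwise comparison of the two integrands, splitting the symmetric difference $\hat\Gamma(\bX)\triangle\Gamma_\ell^*(\bX)$ according to how far the conditional density $p(y\mid\bX)$ is from the threshold $\lambda_\ell^*$. The key observation is that on the symmetric difference, the sign of $p(y\mid\bX)-\lambda_\ell^*$ determines whether $y$ lies in $\Gamma_\ell^*$ but not $\hat\Gamma$ or vice versa, and in both cases the "misclassified" region contributes $|p(y\mid\bX)-\lambda_\ell^*|$ to the excess risk by Proposition~\ref{prop:propExcessRisk}, while contributing only $1$ to $\mathcal{H}$.

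First I would fix a threshold parameter $\delta>0$ to be optimized at the end, and for each $\bx$ decompose
\begin{equation*}
L\big(\hat\Gamma(\bx)\triangle\Gamma_\ell^*(\bx)\big) = \int_{\hat\Gamma(\bx)\triangle\Gamma_\ell^*(\bx)} \one_{\{|p(y|\bx)-\lambda_\ell^*|\leq \delta\}}\,dy + \int_{\hat\Gamma(\bx)\triangle\Gamma_\ell^*(\bx)} \one_{\{|p(y|\bx)-\lambda_\ell^*|> \delta\}}\,dy.
\end{equation*}
For the first term, I would drop the restriction to the symmetric difference and bound it by $\int_{\mathbb{R}}\one_{\{|p(y|\bx)-\lambda_\ell^*|\leq\delta\}}\,dy$, so that after taking expectation over $\bX$ the $\alpha$-margin assumption (Assumption~\ref{ass:marginAss}, applied at $\lambda_\ell = \lambda_\ell^*$) yields a bound of order $c_0\delta^\alpha$, provided $\delta\leq t_0$. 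For the second term, on the event $|p(y|\bx)-\lambda_\ell^*|>\delta$ one has $\one_{\{y\in\hat\Gamma\triangle\Gamma_\ell^*\}} \leq \delta^{-1}|p(y|\bx)-\lambda_\ell^*|\,\one_{\{y\in\hat\Gamma\triangle\Gamma_\ell^*\}}$, so integrating and taking expectation gives, via Proposition~\ref{prop:propExcessRisk}, a bound of $\delta^{-1}\mathbb{E}[\mathcal{E}_\ell(\hat\Gamma)]$.

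Combining the two pieces gives $\mathbb{E}[\mathcal{H}(\hat\Gamma)] \leq c_0\delta^\alpha + \delta^{-1}\mathbb{E}[\mathcal{E}_\ell(\hat\Gamma)]$ for all $0<\delta\leq t_0$. Optimizing over $\delta$ — balancing the two terms by taking $\delta \asymp \big(\mathbb{E}[\mathcal{E}_\ell(\hat\Gamma)]\big)^{1/(\alpha+1)}$ — produces the rate $\big(\mathbb{E}[\mathcal{E}_\ell(\hat\Gamma)]\big)^{\alpha/(\alpha+1)}$ with a constant depending only on $c_0$ and $\alpha$. I would then handle the boundary case where the optimal $\delta$ would exceed $t_0$ separately: there $\mathbb{E}[\mathcal{E}_\ell(\hat\Gamma)]$ is bounded below by a constant, and since $\mathcal{H}$ is trivially bounded (the symmetric difference of two intervals contained in a region of controlled length, using Assumption~\ref{ass:assSigma} and the explicit form of $\Gamma_\ell^*$, or simply the support compactness), the claimed inequality holds after enlarging $C_3$.

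The main obstacle I anticipate is making the peeling argument fully rigorous when $\alpha=0$ (where the bound degenerates to $\mathbb{E}[\mathcal{H}(\hat\Gamma)]\leq C_3$, which must be argued directly from boundedness) and, more substantively, ensuring that the $\alpha$-margin assumption is genuinely applied at the population threshold $\lambda_\ell^*$ rather than at any data-dependent threshold $\hat G^{-1}(\ell)$ — the decomposition above is written purely in terms of $p$ and $\lambda_\ell^*$, so the symmetric difference with the \emph{estimated} predictor enters only through the excess-risk term, which is exactly the point of invoking Proposition~\ref{prop:propExcessRisk}. A secondary technical point is confirming that $\delta\mapsto$ the optimizer stays in the admissible range $(0,t_0]$ or otherwise falls back to the trivial bound; this is routine but must be stated to get an honest absolute-type constant $C_3$.
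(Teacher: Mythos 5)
Your argument is correct and is exactly the standard margin-based comparison that Proposition~\ref{prop:comparisonHammingRisk} relies on: the paper does not spell out a proof (it defers to Proposition~3.1 of the cited set-valued classification work), and that proof is precisely your split of $\hat{\Gamma}(\bX)\triangle\Gamma^*_{\ell}(\bX)$ at level $\delta$, with Assumption~\ref{ass:marginAss} handling $\{|p(y|\bX)-\lambda^*_{\ell}|\leq\delta\}$, Proposition~\ref{prop:propExcessRisk} and the bound $\one\leq\delta^{-1}|p(y|\bX)-\lambda^*_{\ell}|$ handling the rest, and optimization in $\delta\leq t_0$.

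One small caveat on your fallback case $\delta>t_0$: ``support compactness'' alone does not give an absolute bound on $\mathbb{E}[\mathcal{H}(\hat{\Gamma})]$, since $\hat{\Gamma}\subseteq[-s,s]$ only yields $2s+\ell$ with $s=\log(\min(n,N))$ growing. The clean way to close this case is Proposition~\ref{prop:propLength} (which needs $M>4\sqrt{N}$, in force throughout): $\mathbb{E}[\mathcal{L}(\hat{\Gamma})]\leq \ell + Cs/\sqrt{N}$ and $s/\sqrt{N}$ is uniformly bounded, so $\mathbb{E}[\mathcal{H}(\hat{\Gamma})]\leq \mathbb{E}[\mathcal{L}(\hat{\Gamma})]+\mathcal{L}(\Gamma^*_{\ell})\leq 2\ell+C$. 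With this fix the constant depends on $c_0$, $\alpha$, $t_0$ and $\ell$ (the paper's ``absolute constant'' is loose on this point), and your proof goes through, including the degenerate case $\alpha=0$.
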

Importantly, this proposition, together with the inequality $\mathcal{E}_{\ell}\left(\Gamma\right) \leq C_2\mathcal{H}({\Gamma}) $ for all $\Gamma$, shows that under appropriate regularity condition consistency of $\hat{\Gamma}$ \emph{w.r.t.} the distance $\mathcal{H}$ and the excess-risk are equivalent. The only difference is in the rates of convergence. The above result highlights the link between them under Assumption~\ref{ass:marginAss}.
In particular, we only have to establish rates of convergence {\it w.r.t.}~$\mathcal{E}$.
Let us introduce the following notation. When $ a \propto b$, it means that the quantities $a$ and $b$ are equal up to a constant. Moreover  $\lesssim_{\log(n)}$ says that the inequality holds up to some constants and logarithmic factors. Now, we state the main result of this section. 
\begin{theorem}
\label{thm:RatesOfcve}
Let Assumptions~\ref{ass:assSigma} and~\ref{ass:regularity}-\ref{ass:marginAss} be satisfied.
Let $k_n \propto n^{-2/d+2}$,
$s = \log(\min(n,N))$, $M > 4\sqrt{N}$, and $u_n = \frac{1}{n}$. 
The following holds
\begin{equation*}
\mathbb{E}\left[\mathcal{E}_{\ell}(\hat{\Gamma})\right]
\lesssim_{\log(n)} n^{-(1+\alpha)/(d+2)} + \min(n,N)^{-(1+\alpha)}+ N^{-1/2} \enspace .
\end{equation*}
\end{theorem}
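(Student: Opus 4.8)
Looking at Theorem~\ref{thm:RatesOfcve}, the plan is to combine the general excess-risk bound of Proposition~\ref{prop:excessRiskUnderMargin} with the known $k$NN rates for estimating $f^*$ and $\sigma^2$ under the regularity and strong density assumptions. Thus the proof has two distinct halves: first, reduce the right-hand side of Proposition~\ref{prop:excessRiskUnderMargin} to a control on $\mathbb{E}[(\sup_{(x,y)\in\mathcal{C}\times[-s,s]}|\hat p(y|x)-p(y|x)|)^{1+\alpha}]$; second, bound that sup-norm density error in terms of the sup-norm errors of $\hat f$ and $\hat\sigma^2$, and plug in the $k$NN rates.

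For the first half, I would start from Proposition~\ref{prop:excessRiskUnderMargin} with the stated parameter choices $s=\log(\min(n,N))$, $u_n=1/n$, $M>4\sqrt N$. The term $u^{1+\alpha}=n^{-(1+\alpha)}$ is absorbed into the $\min(n,N)^{-(1+\alpha)}$ term; the term $\log(N)/\sqrt N$ is exactly the $N^{-1/2}$ contribution up to the logarithmic factor hidden in $\lesssim_{\log(n)}$; and the term $\min(n,N)^{-(1+\alpha)}$ appears verbatim. So everything reduces to showing
\begin{equation*}
\mathbb{E}\left[\left(\sup_{(x,y)\in\mathcal{C}\times[-s,s]}\left|\hat p(y|x)-p(y|x)\right|\right)^{1+\alpha}\right] \lesssim_{\log(n)} n^{-(1+\alpha)/(d+2)}\enspace.
\end{equation*}

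For the second half, I would expand $|\hat p(y|x)-p(y|x)|$ using the explicit Gaussian form~\eqref{eq:EstimateurSueille}. On the compact domain $\mathcal{C}\times[-s,s]$, with both $\sigma$ and $\hat\sigma$ bounded away from $0$ and $\infty$ (Assumption~\ref{ass:assSigma} and the thresholding with $s$ large enough, since eventually $1/s\le\sigma_0^2\le\sigma_1^2\le s$), the map $(f,\tau)\mapsto \frac{1}{\sqrt{2\pi\tau}}\exp(-(y-f)^2/(2\tau))$ is Lipschitz in $(f,\tau)$ with a Lipschitz constant that grows at most polynomially in $s$ (because $y\in[-s,s]$ and $f^*$ is bounded on $\mathcal{C}$). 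Hence
\begin{equation*}
\sup_{(x,y)\in\mathcal{C}\times[-s,s]}|\hat p(y|x)-p(y|x)| \lesssim \mathrm{poly}(s)\left(\sup_{x\in\mathcal{C}}|\hat f(x)-f^*(x)| + \sup_{x\in\mathcal{C}}|\hat\sigma^2(x)-\sigma^2(x)|\right)\enspace,
\end{equation*}
and raising to the power $1+\alpha$ and taking expectations, the polynomial-in-$s$ factor becomes a polylogarithmic factor in $n$ which is swallowed by $\lesssim_{\log(n)}$. It then remains to invoke the $k$NN sup-norm rates: under Assumptions~\ref{ass:regularity} and~\ref{ass:StrongDensityAssumption}, with $k_n\propto n^{2/(d+2)}$, one has $\mathbb{E}[\sup_{x\in\mathcal{C}}|\hat f(x)-f^*(x)|^{1+\alpha}]\lesssim_{\log(n)} n^{-(1+\alpha)/(d+2)}$ and likewise for $\hat\sigma^2$ (citing~\cite{Gyofri_Kohler_Krzyzak_Walk02} and~\cite{Denis_Hebiri_Zaoui20}; note the exponent $-2/(d+2)$ on $k_n$ in the statement should read $2/(d+2)$, consistent with the classical Lipschitz-nonparametric bias–variance trade-off $k_n^{-1}+\left(k_n/n\right)^{2/d}$ optimized at $k_n\asymp n^{2/(d+2)}$). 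Combining the two halves gives the claimed bound.

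The main obstacle is the second half: carefully controlling the Lipschitz constant of the Gaussian-density map on the expanding window $[-s,s]$ and verifying that the resulting powers of $s=\log(\min(n,N))$ are genuinely logarithmic (so that they disappear into $\lesssim_{\log(n)}$ without corrupting the polynomial rate), and matching the $k$NN sup-norm moment bounds of the right order $1+\alpha$ — the cited references typically state expectation or high-probability bounds, so a short argument (e.g. via a tail bound and integration, or Jensen if $\alpha\le 1$) is needed to obtain the $(1+\alpha)$-th moment in the form used here. The arithmetic of collecting the three residual terms is routine by comparison.
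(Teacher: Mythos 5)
Your proposal is correct and follows essentially the same route as the paper: the paper's proof simply chains Proposition~\ref{prop:excessRiskUnderMargin}, a sup-norm bound on $|\hat p-p|$ over $\mathcal{C}\times[-s,s]$ in terms of $\sup|\hat f-f^*|$ and $\sup|\hat\sigma^2-\sigma^2|$ with polynomial-in-$s$ constants (Proposition~\ref{prop:supNormEstimator}, which is exactly the Lipschitz-type control you re-derive), and the $k$NN sup-norm rates, together with $|\hat\sigma^2-\sigma^2|\le|\tilde\sigma^2-\sigma^2|$ for $n,N$ large. The only superfluous worry is the $(1+\alpha)$-th moment of the $k$NN errors: the paper's Theorem~\ref{thm:cveKnn} (from~\cite{Denis_Hebiri_Zaoui20}) already states these moment bounds directly, and you correctly flag that $k_n\propto n^{-2/d+2}$ should be read as $k_n\asymp n^{2/(d+2)}$.
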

Several comments can be made from the above result.
The first term is the classical nonparametric fast rate of convergence for the excess-risk under the Margin assumption and the Lipschitzness of the regression function. The last two terms that are related to the problem of PI estimation have different behavior according to the interplay between $n$ and $N$. In particular, as soon as $N\leq n$, the limiting term is $N^{-1/2}$ and the rate becomes slow if $n^{-(1+\alpha)/(d+2)} $ goes faster to $0$. On the other hand, if the number of unlabeled data $N$ is large with $N \gg  n^{1+\alpha}$ we recover the fast rate of convergence $n^{-(1+\alpha)/(d+2)} $. Between these two extremes, $N^{-1/2}$ can still be the limiting term. However, we hope that in our semi-supervised setting, enough data are available to make this term negligible as compared to the others.

\section{Extension and other approach}
\label{sec:disc}
In this section, we discuss some points beyond the considered framework in this paper. The extension of our results to other regression models is presented in Section~\ref{subsec:extension}. Another approach to build prediction interval based on the control of the expected error rate~\cite{Lei_Wasserman13} is described in Section~\ref{subsec:conformal}. In particular, we exhibit the main differences with our considered procedure.

\subsection{Beyond Gaussian setting}
\label{subsec:extension}

In the present work, we study prediction intervals under expected length constraint in the heteroscedastic Gaussian regression setup. The appealing aspect of this framework lies in the form of the optimal predictor 
\begin{eqnarray}
\label{eq:optimalrulediscussion}
\Gamma^{*}_{\ell}(\bX) = \{y\in \bbR : \ p(y|\bX)\geq \lambda^*_\ell\} \enspace,
\end{eqnarray}
with $\lambda^*_\ell=G^{-1}(\ell)$ and $G(t):=\int_{\mathbb{R}} \mathbb{P}(p(y|\bX)\geq t)dy$. Furthermore, the density $p(y|\bX)$ has an explicit expression that exclusively depends on the regression and the conditional variance functions $f$ and $\sigma$. Therefore, our proposed algorithm only involves estimators of $f$ and $\sigma$ to estimate the conditional density $p$. In particular, we do not consider any general procedure for density estimation. 

In this paragraph, we discuss possible extensions outside the Gaussian framework but still considering the regression framework 
$Y=f^{*}(\bX)+\sigma(\bX) \, \varepsilon$.
In order to make sure that the optimal predictor is well defined, we require the following assumption.
\begin{assumption}
\label{ass:Conddensity}
We assume that the variable $Y$ given $\bX$ has density $p(\cdot|\bX)$.
\end{assumption}
If we do not assume that $Y|\bX $ belongs to a given family of distribution, the characterization of the prediction interval~\eqref{eq:optimalrulediscussion} still holds but the expression of the conditional density can not be simplify. Therefore, a data-driven predictor, based on the plug-in principle, must rely on estimates $\hat{p}(\cdot | \bx)$ of the conditional density $p(\cdot | \bx)$. The way to build the estimator $\hat{\Gamma}$ does not differ from the Gaussian case ones $\hat{p}$ is obtained (see Section~\ref{sec:estimation}). From the theoretical perspective, general properties such as Propositions~\ref{prop:oraclePredictor} and~\ref{prop:propExcessRisk} still hold and the question here is to investigate consistency results of the algorithm $\hat{\Gamma}$. The control on the expected length of the prediction interval $\mathbb{E}\left[\left|\mathcal{L}(\hat{\Gamma})-\ell\right|\right]  
\leq C\dfrac{s}{\sqrt{N}}$ given in Proposition~\ref{prop:propLength} is also still valid since this result is distribution-free. On the other hand, consistency for the excess-risk requires conditions. 
In the case where $Y$ is bounded, if the estimator of the conditional probabilities is such that 
$
\mathbb{E}\left[\int_{\mathbb{R}} |\hat{p}(y|\bX)-p(y|\bX)|dy\right] \underset{n\to +\infty}{\longrightarrow}0,
$ we can establish under Assumptions~\ref{ass:assSigma},~\ref{ass:assFstar},~\ref{ass:CDcontinuity}, and~\ref{ass:Conddensity}  that 
\begin{equation*}
\mathbb{E}\left[\mathcal{H}\left(\hat{\Gamma}\right)\right] \underset{n,N \to +\infty}{\longrightarrow}0 \enspace.    
\end{equation*}

Essentially, this result says that the estimation procedure that we study in this paper extends beyond the Gaussian setting. In particular, we still manage to get consistency for bounded random variable. It is worth mentioning that consistency might also be obtained as soon as $Y|\bX$ is sub-Gaussian. Then our method is statistically valid for general settings.

\subsection{Prediction interval under expected coverage constraint}
\label{subsec:conformal}

In this section, we present the approach which focuses on the construction of prediction interval under expected coverage. This method consists in minimizing the length of the prediction interval under a constraint on its expected error rate.
This approach is for instance studied in~\cite{Lei_Wasserman13}.

More precisely, let $\beta > 0$. We consider the following problem
\begin{equation*}
\Gamma^{*}_{\beta} = \arg\min_{\mathbb{P}(Y \notin \Gamma(\bX) \leq \beta}
\mathbb{E}\left[L(\Gamma(\bX)\right]]\enspace.
\end{equation*}
Under Assumptions~\ref{ass:CDcontinuity} and~\ref{ass:Conddensity} we can derive an expression of $\Gamma_{\beta}^*$ based on thresholding of the conditional densities:
\begin{equation*}
\Gamma^*_{\beta} = \{y \in \mathbb{R}, \;\; p(y|\bx) \geq t_{\beta}\} \enspace,    
\end{equation*}
with $t_{\beta}$ defined as solution of
\begin{equation*}
\mathbb{E}\left[\one_{\{p(Y|\bX) \geq t_{\beta} \}}\right] = \int_{\mathbb{R}} \one_{\{p(y|\bx) \geq t_{\beta}\}} p(y|\bx) \, {\rm d}y = 1-\beta \enspace. 
\end{equation*}
Therefore, from the above equation, we deduce that
\begin{equation*}
H^{-1}(t_{\beta}) =  1-\beta \enspace,    
\end{equation*}
where $H(t) = \mathbb{E}\left[\one_{\{p(Y|\bX) \geq t\}}\right]$.
Similarly to the procedure described in Section~\ref{subsec:dataDrivenProce}, we are able to provide a randomized prediction interval $\hat{\Gamma}_{\beta}$
based on the estimator $\hat{p}$. We point out that an important difference between the construction of estimators $\hat{\Gamma}_{\beta}$ and $\hat{\Gamma}$  is the estimation of the function $H$. Indeed, this step require a {\it labeled} and not an {\it unlabeled} dataset, but do not request the discretization step. More formally, considering a {\it labeled} dataset $\mathcal{D}_K = \{(\bX_i,Y_i), \ i = 1, \ldots, K\}$, and $(\zeta_1, \ldots, \zeta_K)$ the vector of perturbation, the estimator $\hat{H}$ of the function $H$ is defined for each $t > 0$, as follows 
\begin{equation*}
\hat{H}(t)  =   \frac{1}{K} \sum_{i = 1}^K \one_{\{p(Y_i|\bX_i,\zeta_i) \geq t\}}\enspace .
\end{equation*}
Although a theoretical comparison with our proposed method is not our purpose,
using similar arguments as in~\cite{Lei_Wasserman13}, we can establish the consistency of $\hat{\Gamma}_{\beta}$ under same assumptions as in Theorem~\ref{thm:consistance}. 
\begin{equation*}
\mathbb{E}\left[\mathcal{H}\left(\hat{\Gamma}_{\beta}\right)\right] \rightarrow 0 \enspace .
\end{equation*}
In Section~\ref{sec:numResult},  we focus on a comparison between our method and the expected coverage approach from a numerical perspective.

\section{Numerical experiments}
\label{sec:numResult}
This section is devoted to a numerical study of the performance of our procedure. More precisely, we analyze our approach on synthetic data in Section~\ref{subsec:simulation} and provide a comparison with the expected coverage approach described in Section~\ref{subsec:numCompaLei}.
 
\subsection{Simulation study}
\label{subsec:simulation}

We illustrate the performance of our procedure on the following model 
\begin{equation}
\label{eq:modelSimu}
Y = \exp(-\|\bX\|_2) +\frac{d\varepsilon}{2+4\|\bX\|_2}, \;\; \bX \in \mathbb{R}^d \enspace ,
\end{equation}
where $\bX =(X^1, \ldots, X^d)$ is such that for $j=1, \ldots, d$, the $X^j$ are \iid simulated according to a Uniform distribution on $[0,1]$ and are independent from $\varepsilon \sim \mathcal{N}(0,1)$. Note that the considered model satisfies Equation~\ref{eq:eqModel}, and that Assumptions~\ref{ass:assSigma}, and~\ref{ass:assFstar} are fulfilled.

For our numerical experiments, we choose reasonable dimensions of the features space $d \in \{1,5\}$.  
Before going further in our investigations, we display the boxplots of the output variable $Y$ in Figure~\ref{fig:boxplotModel}. We see that the range of values of $Y$ is much larger for $d=5$ and is included in $[-5,5]$ for both $d =1,5$. Besides, we chose to focus on $\ell \in \{0.1, 0.5, 1, 2\}$ which seems to be relevant values according to Figure~\ref{fig:boxplotModel} in order to still get interpretation of the output.
\begin{figure}
    \centering
    \includegraphics[scale= 0.5]{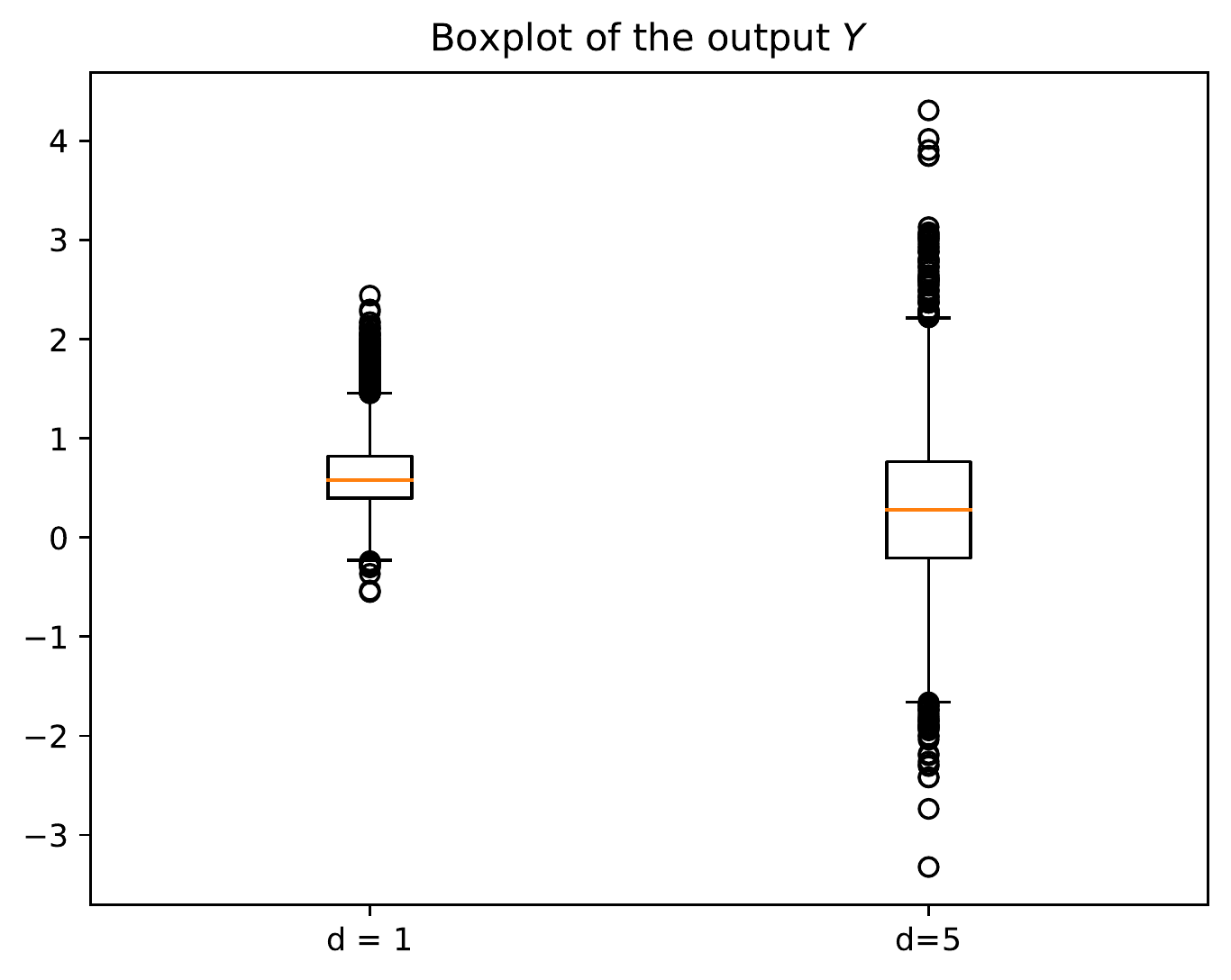}
    \caption{Boxplot of the output $Y$ for $d = 1,5$}
    \label{fig:boxplotModel}
\end{figure}
For $\ell \in \{0.1, 0.5, 1, 2\}$,  
we provide the evaluation of the expected length and the error rate for the oracle prediction set $\Gamma_{\ell}^*$. 
To this end, we repeat $100$ times the following scheme.
\begin{enumerate}
\item[i)] estimate $\lambda^*_{\ell}$ from an {\it unlabeled} dataset of size $N = 1000$ on a regular grid of size $M=1000$ of the interval $[-5,5]$;
\item[ii)] derive the resulting  prediction interval on the same grid over a test set of size $T =1000$;
\item[iii)] based on the test set, compute the expected length and the error rate.
\end{enumerate}
From these repetitions, we compute the mean and standard deviation of the estimates. The obtained results are provided in Table~\ref{tab:perfBayes}.

 \begin{table}[]
    \centering
    \begin{tabular}{c ||c | c || c | c} 
               & \multicolumn{2}{c||}{Expected length} &  \multicolumn{2}{c}{Error rate}\\ \hline
    $\ell$     & $d=1$ & $d=5$ & $d=1$ & $d=5$  \\\hline
    $0.1$      &  0.1 (0.01)    &0.1 (0.01) & 0.81 (0.01) & 0.94 (0.01)\\
    $0.5$      & 0.49 (0.01) & 0.49 (0.01) &  0.34 (0.01) & 0.71 (0.01) \\
    $1$     &  0.99 (0.01)   & 0.99 (0.01) & 0.07 (0.01) & 0.48 (0.01)\\
    $2$   & 1.99 (0.03) & 1.99 (0.01) & 0.00 (0.00) & 0.17 (0.01) \\
    \end{tabular}
    \caption{Performance of the Oracle PI for $\ell \in \{0.1,0.5,1,2\}$.}
    \label{tab:perfBayes}
\end{table}
 
\paragraph*{Simulation scheme.}

To assess the performance of our procedure,  we consider the following scheme. For $d \in \{1,5\}$ and $\ell \in \{0.1,0.5,1,2\}$,  we repeat $100$ the following steps.
\begin{enumerate}
    \item[i)] estimate $f^*$ and $\sigma^2$ from a training test of size $n=500$. We consider the residual-based method~\cite{Hall_Caroll89}. The estimation of $f^*$ and $\sigma^2$ relies on the random forests algorithm from \texttt{python} library \texttt{sklearn}. We also choose $u=10^{-5}$ for the parameter of the perturbation $\zeta$ (see Eq.~\eqref{eq:addpertubation});
    \item[ii)] compute $\hat{G}^{-1}(\ell)$ using an {\it unlabeled} dataset of size $N = 100$ on a regular grid of size $M=100$ of the interval $[-s,s]$, where $s = \max(-\min(Y_{train}); \max(Y_{train})$,
    \item[iii)] derive the resulting  prediction interval on a regular grid of size $1000$ of $[-s,s]$ over a test set of size $T =1000$;
    \item[iv)] based on the test set, compute the expected length and the error rate.
\end{enumerate}
From these experiments, we compute the empirical means and standard deviations expected length and the error rate. The results are provided in Table~\ref{tab:perfPred}. A visual description of the behavior of our PI is also given in Figure~\ref{fig:figPerf}.

Notice that the value of $s$ that we consider here is different from the one suggested by the theory in Theorem~\ref{thm:RatesOfcve}. This is a minor point. The parameter $s$ in the theory is set such that most of the labels lie in $[-s,s]$ with high probability. This happens when $n$ and $M$ grow since $s = \log(\min(n,N))$. Our choice in practice ensures that this property holds regardless the values of $n$ and $N$.


\begin{table}[]
    \centering
    \begin{tabular}{c ||c | c || c | c} 
               & \multicolumn{2}{c||}{Expected length} &  \multicolumn{2}{c}{Error rate}\\ \hline
    $\ell$     & $d=1$ & $d=5$ & $d=1$ & $d=5$  \\\hline
    $0.1$      &  0.1 (0.01)    &0.1 (0.02) & 0.81 (0.02) & 0.94 (0.01)\\
    $0.5$      & 0.50 (0.01) & 0.50 (0.02) &  0.34 (0.02) & 0.72 (0.02) \\
    $1$     &  1.00 (0.02)   & 1.00 (0.02) & 0.07 (0.01) & 0.48 (0.01)\\
    $2$   & 2.01 (0.06) & 2.01 (0.01) & 0.00 (0.00) & 0.17 (0.01) \\
    \end{tabular}
    \caption{Performance of $\hat{\Gamma}$ for $\ell \in \{0.1,0.5,1,2\}$.}
    \label{tab:perfPred}
\end{table}

\begin{figure}
    \centering
    \includegraphics[scale= 0.4]{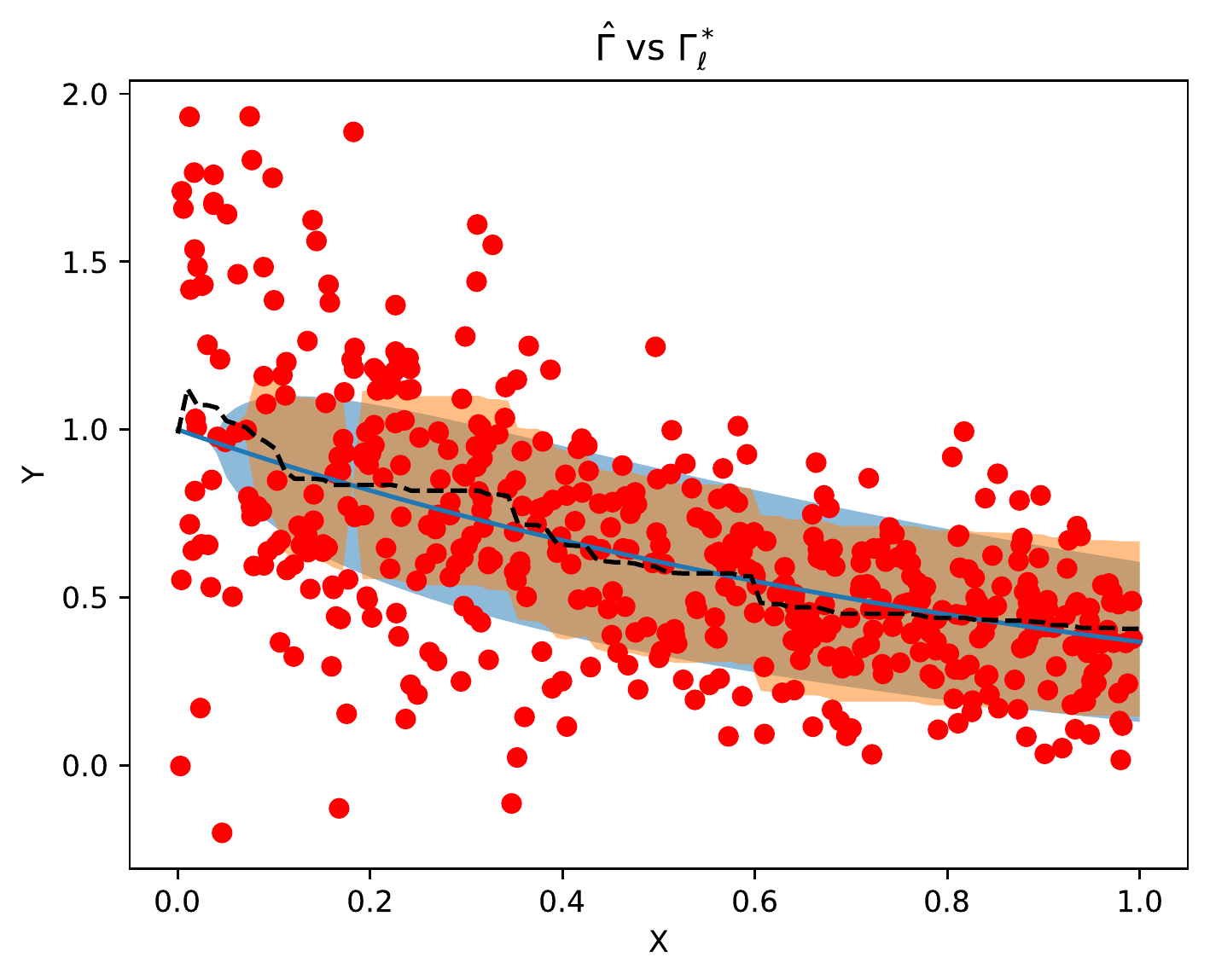} \hspace*{0.5cm}
    \includegraphics[scale= 0.4]{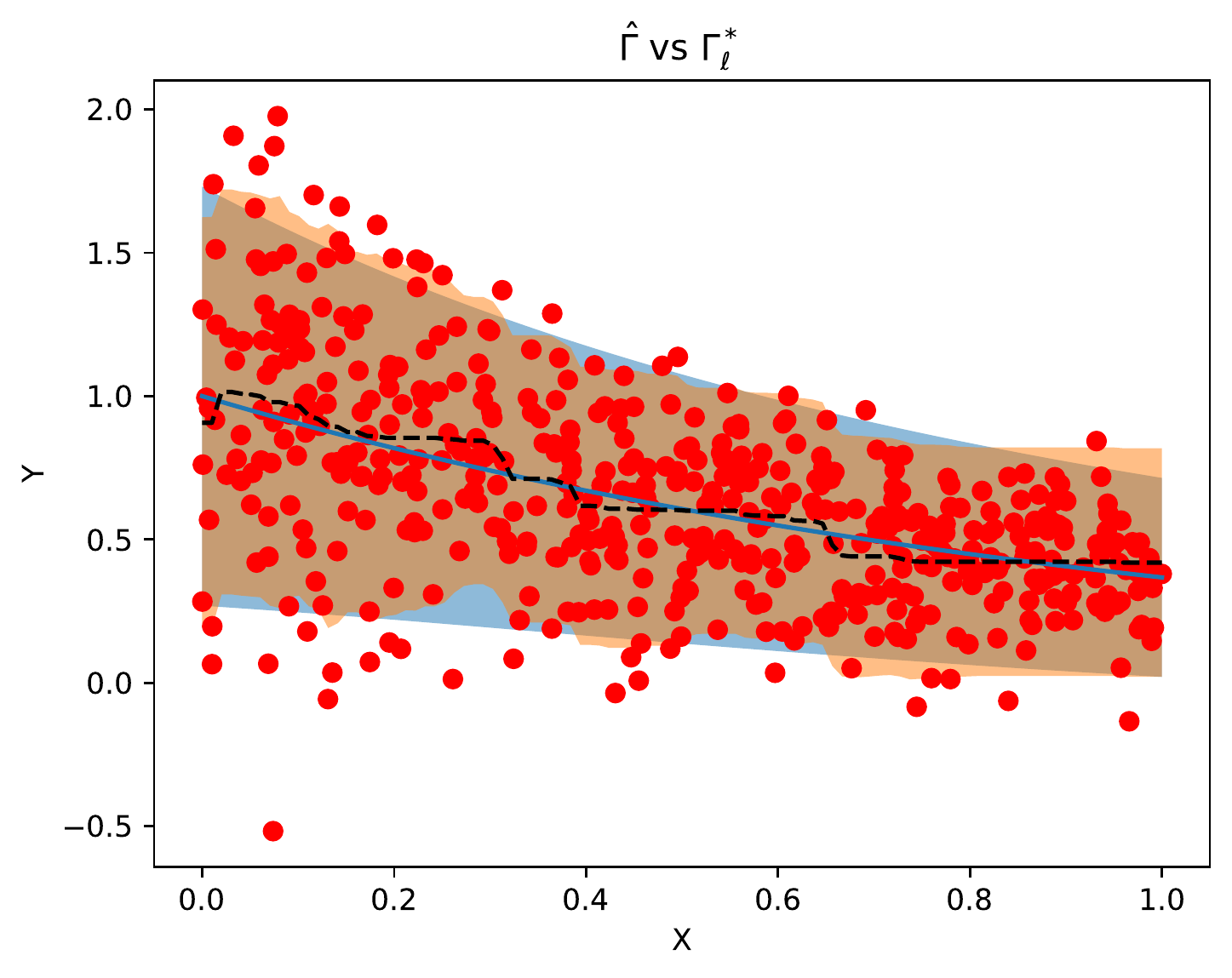}
    \caption{Visual description of the empirical PI $\hat{\Gamma}$ and its oracle counterpart $\Gamma^*_{\ell}$, with $\ell = 0.5$ on the left and $\ell=1$ on the right for $d = 1$. The scatter plot of data is displayed and the graph of both regression function $f^*$ and estimator $\hat{f}$ is represented (solid line for $f^*$, dashed line for $\hat{f}$. The oracle PI $\Gamma^*_{\ell}$ (empirical PI $\hat{\Gamma}$, respectively) is given in blue (orange, respectively).}
    \label{fig:figPerf}
\end{figure}

\paragraph*{Results.}
Two conclusions can be made from this first numerical study. First Tables~\ref{tab:perfBayes} and~\ref{tab:perfPred} highlight how effective our method is in producing PI with (almost) exactly the right length. This is an important point and suggests that our strategy succeeds to enforce the constraint on the length prescribed by the optimization problem.
Second, let us focus on a comparison between $\Gamma_{\ell}^*$, the oracle PI, and its empirical counterpart $\hat{\Gamma}$. Table~\ref{tab:perfBayes} and Table~\ref{tab:perfPred} show how close are the performance of these two PI both in terms of expected length and of error rate. Interestingly, the performance of $\hat{\Gamma}$ is obtained with a moderate size $N$ of the unlabeled sample that is used to estimate the threshold. These results also suggest that $n=500$ is enough to have good estimations of the regression and variance functions. The closeness between $\Gamma_{\ell}^*$ and $\hat{\Gamma}$ is also illustrated in Figure~\ref{fig:figPerf}.

\subsection{Numerical comparison with expected coverage approach}
\label{subsec:numCompaLei}
In this section, we numerically compare our procedure to the approach that constraint the expected coverage described in Section~\ref{subsec:conformal}. We consider the model defined in Equation~\ref{eq:modelSimu} with $d = 5$ and focus on the estimation of $\Gamma^*_{\ell}$ for $\ell = 2$. With this expected length, the oracle predictor $\Gamma^*_{\ell}$ reaches an error rate of $\beta = 0.17 $. Therefore, for this learning task, we are able to provide empirical PI for both approaches. That is to say, we compute $\hat{\Gamma}$ with  $\ell = 2$ as expected length and $ \hat{\Gamma}_{\beta}$ with $\beta = 0.17$ as expected error.
In order to get a fair comparison of the methods, we repeat $20$ times the following steps. 
For both approaches, we use a training set of size $n=500$ to estimate the density $p$ and we estimate the threshold of the considered procedure with a dataset of size $N \in \{10, 30, 50, 70, 100,150,200,500,1000\}$. Finally, we compute the expected length and error rate of both empirical PI
over a test set of size $T=1000$.
From these repetitions, we compute empirical means and standard deviations.
The results are displayed in Figure~\ref{fig:comparison}.

As expected, in average, both methods behaves similarly. However there are important differences in favor of our approach. First, the convergence of our method is much faster to the mean value both for the expected length and the error rate. We notice that $N=10 $ is already enough for our method while more than $500$ samples are needed for the method that focus on the coverage as constraint.
Second, it seems that our construction is much more stable, in particular for length calibration. It illustrates the efficiency of our procedure to build prediction interval with the right expected length.

The two approaches are definitively not comparable in terms of objectives. Indeed, if we are really focused on constraining the error rate, then the length constraint appears (at first sight) sub-optimal and vice versa if we ask for interpretable outputs. However, our numerical analysis clearly suggests that our methodology is more stable: it induces a procedure with a lower variance.

\begin{figure}
    \centering
    \includegraphics[scale= 0.4]{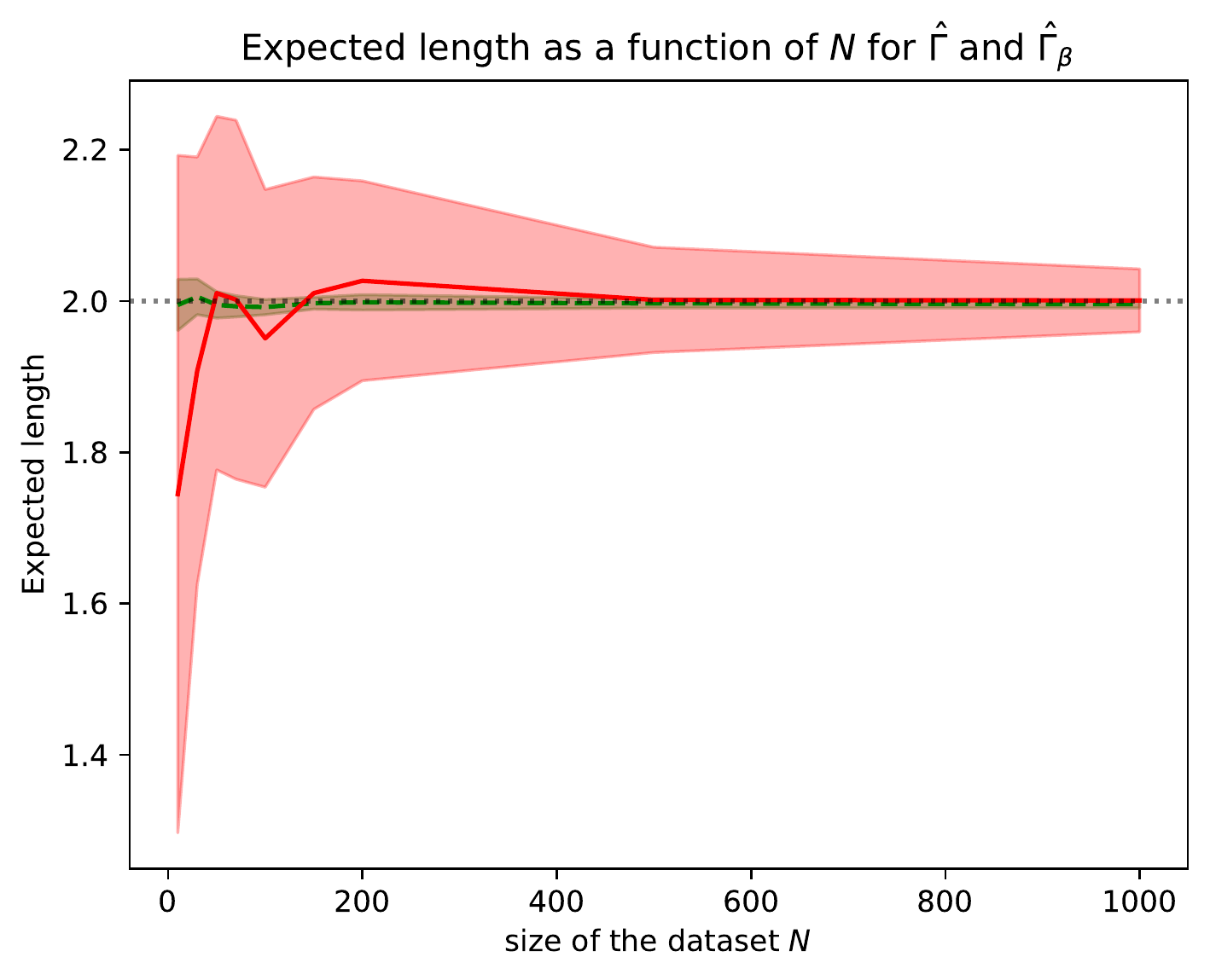} \hspace*{0.5cm}
    \includegraphics[scale= 0.4]{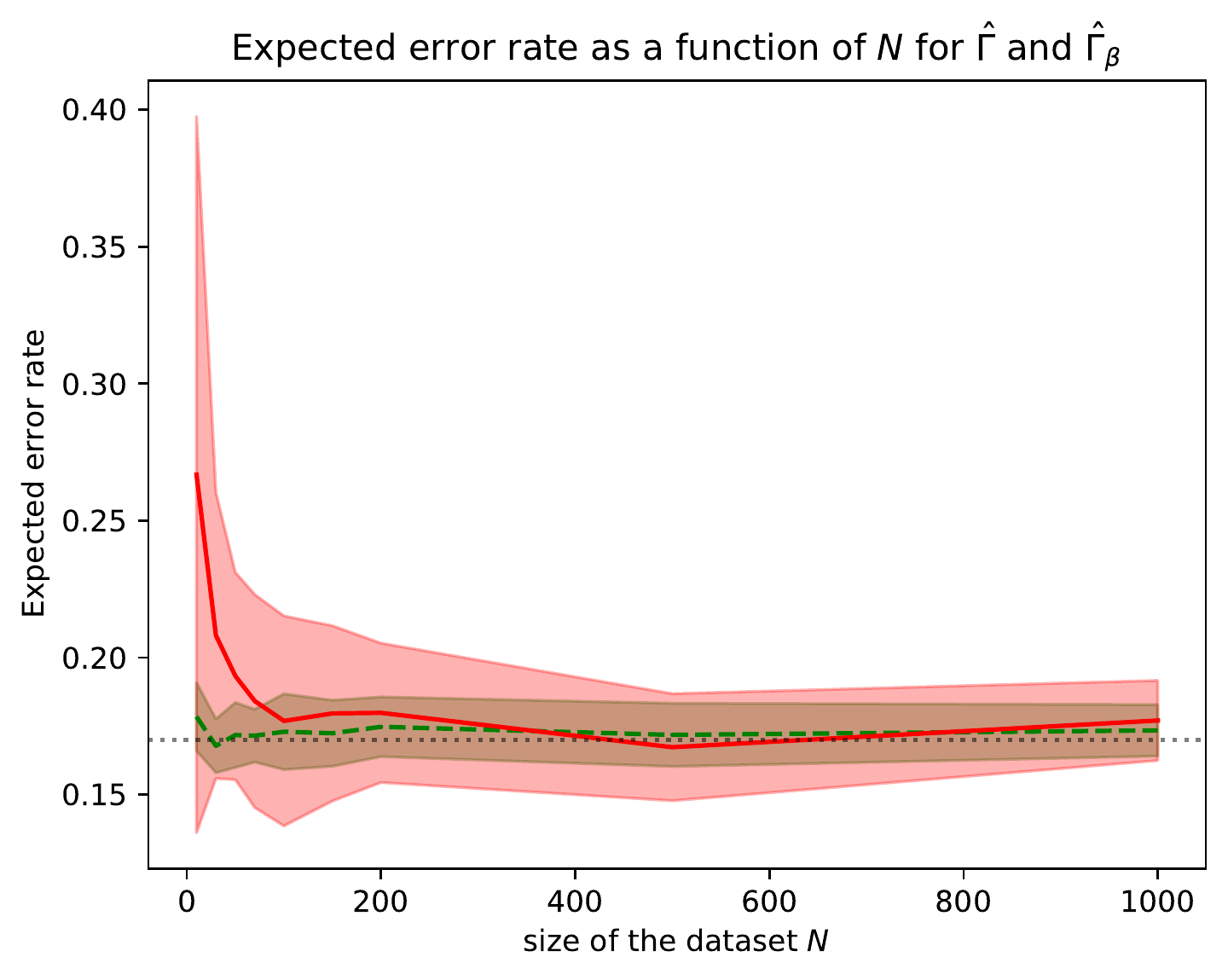}
    \caption{Comparison between $\hat{\Gamma}$ and $\hat{\Gamma}_{\beta}$. We plot the expected length (on the left) and the expected coverage (on the right) as a function of $N$ over $20$ repetitions for $\hat{\Gamma}$ (dashed) and $\hat{\Gamma}_{\beta}$ (solid line in red). The true value of the parameter is given by the dotted line.}
    \label{fig:comparison}
\end{figure}

\section{Conclusion}
\label{sec:conclusion}
 
In this paper, we provide a general methodology to build \emph{prediction intervals with controlled expected length} in the Gaussian regression. Our proposed algorithm is very effective in controlling the expected length of the output and then ensure the interpretability of the outcome. The theoretical analysis indicates that our method mimics the optimal rule \wrt the expected length and, under appropriate properties on the base estimators of the regression function, it is also efficient \wrt the symmetric difference distance and the excess-risk. Furthermore, a numerical study supports our theoretical results. Notably, it highlights good stability properties as compared to prediction intervals that focus on expected coverage constraints. 

Our numerical comparison to PI under expected coverage constraint additionally opens a very significant door to the use of our method. Because of the stability of our method, one may think to the following two-stage procedure to produce a PI with error rate $\beta$. 
\begin{itemize}
    \item{\it Step 1}. Build the PI with error rate $\beta$ and evaluate its length $\tilde{\ell}$;
    \item{\it Step 2}. Build our PI with average length $\tilde{\ell}$.
\end{itemize}
While we do not expect a significant improvement in average, the resulting prediction interval might be more stable. This will be the purpose of future investigation.

On the other hand, inference in the high-dimensional setting is a crucial challenge with modern data. Several successful studies consider the Gaussian \emph{homoscedastic} linear regression~\cite{Lu_inferenceLasso_17,Sara_TestHighdim_14,Minnier_PerturbInfe_11,Belloni_infer_14}. An important direction for future research is to carry out PI i) for non Gaussian models; ii) and that can handle heteroscedastic model. Both of these questions have their applications in the high dimensional setting.

\bibliographystyle{plain}
\bibliography{BIB}

\newpage
\appendix

\section*{Appendix}

This appendix is devoted to the proof of our main results.
The proofs related to Section~\ref{sec:framework} are provided in Section~\ref{sec:proofS2}, while Section~\ref{sec:proofS3} is devoted to the proofs of Section~\ref{sec:estimation}. Finally,  Section~\ref{sec:techRes} gathers useful results. In particular, we give rates of convergence for $K$NN estimates for both regression and variance function. Notice that in the whole appendix, $C$ is a positive constant that may change from one line to another.

\section{Technical results}
\label{sec:techRes}

In this section, we provide some useful properties that are used for the proof of our main results

\subsection{Technical lemmas}
The first tool we introduce is a generalization of the classical inverse transform theorem~\cite[Lemma 21.1]{vanderVaart98} to the continuous case. Let $a>0$. We consider a random process $(Z_y)_{y\in [-a,a]}$ such that the function $H$ defined by 
\begin{equation*}
H(t)=\frac{1}{2a}\int_{-a}^{a}\mathbb{P}(Z_y\geq t)dy \enspace,
\end{equation*}
is continuous on $\mathbb{R}_{+}$.
\begin{lemma}
\label{lem:UnifDist}
Let $T$ uniformly distributed on $[-a,a]$ and independent of $(Z_y)_{y\in [-a,a]}$. We consider the random variable $Z_T$ and let $U$ be distributed according to the uniform distribution on $[0,1]$. Then
\begin{eqnarray*}
H(Z_T)\overset{\mathcal{L}}=U \enspace \text{and} \enspace H^{-1}(U)\overset{\mathcal{L}}=Z_T \enspace.
\end{eqnarray*}
\end{lemma}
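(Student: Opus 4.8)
The statement is a continuous analogue of the standard probability integral transform: if $X$ has continuous c.d.f. $F$, then $F(X)$ is uniform and $F^{-1}(U)$ has the same law as $X$. Here the ``c.d.f.'' is the averaged survival function $H(t) = \frac{1}{2a}\int_{-a}^a \mathbb{P}(Z_y \geq t)\,dy$, and the ``random variable'' is $Z_T$ with $T$ independent uniform on $[-a,a]$. The plan is first to identify the distribution function of $Z_T$ in terms of $H$, then apply the classical one-dimensional result.

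The first step is to show that $t \mapsto \mathbb{P}(Z_T \geq t)$ equals $H(t)$. Conditioning on $T$ and using independence of $T$ and $(Z_y)_{y\in[-a,a]}$, together with Fubini's theorem (the integrand is nonnegative and bounded by $1$ on a finite measure space, so this is justified),
\begin{equation*}
\mathbb{P}(Z_T \geq t) = \mathbb{E}\left[\mathbb{P}(Z_T \geq t \mid T)\right] = \frac{1}{2a}\int_{-a}^{a}\mathbb{P}(Z_y \geq t)\,dy = H(t)\enspace.
\end{equation*}
Equivalently, writing $\bar F_{Z_T}(t) = \mathbb{P}(Z_T \geq t)$ for the survival function of $Z_T$, we have $\bar F_{Z_T} = H$, which by hypothesis is continuous on $\mathbb{R}_+$. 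I would also note that $Z_T$ is supported on $\mathbb{R}_+$ in the relevant sense here (or at least that $H$ is the object playing the role of the survival function), so that continuity of $H$ means $Z_T$ has no atoms.

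The second step is to invoke the classical inverse transform theorem (the cited \cite[Lemma 21.1]{vanderVaart98}) applied to $Z_T$: since $Z_T$ has continuous survival function $H$ (equivalently, continuous c.d.f. $1-H$), the random variable $H(Z_T)$ is uniformly distributed on $[0,1]$, and conversely, if $U$ is uniform on $[0,1]$, then $H^{-1}(U)$ has the same law as $Z_T$ — here $H^{-1}$ is the generalized inverse, and the identity $H^{-1}(U) \overset{\mathcal{L}}{=} Z_T$ uses only that $H$ is a bona fide survival function of a real random variable together with the defining property of the generalized inverse. One mild technical care point, which I expect to be the only real subtlety: the classical statement is usually phrased for c.d.f.s and for the forward transform $F(X) \sim U$ needs $F$ continuous; here we work with the survival function $H$, and one must check the generalized-inverse conventions match the footnote convention $G^{-1}(+\infty)=0$ used elsewhere in the paper, and that the behavior of $H$ at $t = 0$ (where it may equal $1$ or blow up depending on normalization) does not cause an issue. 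Since $H$ takes values in $[0,1]$ and is continuous and nonincreasing, this matches the hypotheses of the one-dimensional result directly, and both claimed distributional identities follow. The main obstacle is thus purely bookkeeping about generalized inverses and endpoint conventions rather than anything substantive; the conceptual content is entirely the Fubini computation in the first step plus a black-box citation.
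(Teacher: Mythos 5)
Your proposal is correct and follows essentially the same route as the paper: the core step in both is the conditioning/Fubini identity $\mathbb{P}(Z_T\geq t)=\frac{1}{2a}\int_{-a}^{a}\mathbb{P}(Z_y\geq t)\,dy=H(t)$, after which the two distributional identities are exactly the classical (survival-function) inverse transform. The only cosmetic difference is that you cite the one-dimensional result as a black box once $H$ is identified as the survival function of $Z_T$, whereas the paper re-derives it inline, computing $\mathbb{P}(H(Z_T)\leq t)=H(H^{-1}(t))=t$ via continuity and $\mathbb{P}(H^{-1}(U)\leq t)=\mathbb{P}(Z_T\leq t)$ directly.
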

\begin{proof}
For every $t\geq 0$, we have $\mathbb{P}(H(Z_T)\leq t)= \mathbb{P}(Z_T\geq H^{-1}(t))$. Denote by $d\mathbb{P}_{T}$ the marginal distribution of $T$. Since the variable $T$ is independent of $(Z_y)_{y\in [-a,a]}$ and $H$ is continuous, one gets
\begin{eqnarray*}
\mathbb{P}(H(Z_T)\leq t)&=&\int \mathbb{P}(Z_T\geq H^{-1}(t)|T=y)) \ d\mathbb{P}_{T}(y)\\
&=& \frac{1}{2a}\int_{-a}^{a} \mathbb{P}(Z_y\geq H^{-1}(t)|T=y)) \ dy\\
&=&\frac{1}{2a}\int_{-a}^{a} \mathbb{P}(Z_y\geq H^{-1}(t)) \ dy=H(H^{-1}(t))=t \enspace,
\end{eqnarray*}
and we deduce that $H(Z_T)\overset{\mathcal{L}}=U$. For the second point of the Lemma, we observe that 
\begin{eqnarray*}
\mathbb{P}(H^{-1}(U)\leq t)&=& \mathbb{P}(U\geq H(t))=\frac{1}{2a}\int_{-a}^{a}\mathbb{P}(Z_y\leq t)dy= \frac{1}{2a}\int_{-a}^{a} \mathbb{P}(Z_y\leq t|T=y))dy=\mathbb{P}(Z_T\leq t)  \enspace .
\end{eqnarray*}
\end{proof}
\subsection{Rates of convergence for K-NN estimators}

In this section, we gather the results we use for $K$-NN estimators of both regression and variance function. The proof of this result is provided in \cite{Denis_Hebiri_Zaoui20}.

\begin{theorem}
\label{thm:cveKnn}
Grants Assumptions~\ref{ass:regularity}, ~\ref{ass:StrongDensityAssumption}, for
$k_n\propto n^{-2/d+2}$, and all $\alpha > 0$, the $K$-NN estimators defined in Equation~\eqref{eq:knnestimators} satisfy
\begin{eqnarray*}
\mathbb{E}\left[\left(\sup_{\bx \in \mathcal{C}}|\hat{f}(\bx) - f^{*}(\bx)|\right)^{1+\alpha}\right]  & \leq &  C\log(n)^{1+\alpha}n^{-(1+\alpha)/(2+d)}\enspace,\\   
\mathbb{E}\left[\left(\sup_{\bx \in \mathcal{C}}|\hat{\sigma}^2(\bx) - \sigma(\bx)|\right)^{1+\alpha}\right]  & \leq &  C\log(n)^{1+\alpha}n^{-(1+\alpha)/(2+d)}\enspace.
\end{eqnarray*}
\end{theorem}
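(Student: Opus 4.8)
The statement to prove is Theorem~\ref{thm:cveKnn}, which gives sup-norm rates of convergence for $k$NN estimators of the regression and variance functions. The proof is attributed to~\cite{Denis_Hebiri_Zaoui20}, so my plan is to reconstruct the standard argument.

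\medskip

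\textbf{Plan.} The approach is a classical bias-variance decomposition for $k$NN estimators, carried out at the level of the sup-norm over the compact regular set $\mathcal{C}$, and then pushed from the $L^2(\mathbb{P}_\bX)$ or $L^\infty$ scale to the $(1+\alpha)$-moment via boundedness considerations and Gaussian concentration. First I would handle $\hat f$. Write $\hat f(\bx) - f^*(\bx) = \big(\hat f(\bx) - \mathbb{E}[\hat f(\bx)\mid \bX_1,\dots,\bX_n]\big) + \big(\mathbb{E}[\hat f(\bx)\mid \bX_1,\dots,\bX_n] - f^*(\bx)\big)$. The conditional-bias term is controlled by the Lipschitz assumption on $f^*$ (Assumption~\ref{ass:regularity}): it is at most $\mathrm{Lip}(f^*)\cdot \max_{i\le k_n}\|\bX_{(i,n)}(\bx)-\bx\|$, and under the strong density assumption (Assumption~\ref{ass:StrongDensityAssumption}) the $k_n$-th nearest-neighbor radius is $O((k_n/n)^{1/d})$ uniformly over $\bx\in\mathcal{C}$ with high probability — this is where one invokes the standard covering/VC-type bound on NN radii for regular densities. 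With $k_n \propto n^{2/(d+2)}$ (note the exponent in the statement, written $n^{-2/d+2}$, should read $n^{2/(d+2)}$) this radius is of order $n^{-1/(d+2)}$, matching $\log(n)\, n^{-1/(2+d)}$ up to the log factor. The stochastic term $\hat f(\bx)-\mathbb{E}[\hat f(\bx)\mid \bX_{1:n}]$ is a normalized sum $\frac{1}{k_n}\sum_{i=1}^{k_n}\sigma(\bX_{(i,n)}(\bx))\varepsilon_i$ of independent (conditionally on the $\bX_i$) sub-Gaussian variables with variance proxy bounded by $\sigma_1^2/k_n$; a Gaussian maximal inequality over a suitable net of $\mathcal{C}$ (or over the finitely many distinct NN-cells) gives $\sup_{\bx\in\mathcal{C}} |\cdot| \lesssim \sqrt{\log n / k_n} \asymp \log(n)\, n^{-1/(2+d)}$ after choosing $k_n$ as above. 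Combining, raising to the power $1+\alpha$, and integrating the Gaussian tails yields the first bound.

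\medskip

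For $\hat\sigma^2$, the argument is analogous but one extra layer deep, since $\tilde\sigma^2(\bx) = \frac{1}{k_n}\sum_{i=1}^{k_n}(Y_{(i,n)}(\bx)-\hat f(\bX_{(i,n)}(\bx)))^2$ and then $\hat\sigma^2$ is the thresholded version. I would write $(Y_{(i,n)} - \hat f(\bX_{(i,n)}))^2 = (f^*(\bX_{(i,n)}) + \sigma(\bX_{(i,n)})\varepsil_i - \hat f(\bX_{(i,n)}))^2$ and expand, producing: a term $\frac{1}{k_n}\sum \sigma^2(\bX_{(i,n)})\varepsilon_i^2$ (whose conditional mean is close to $\sigma^2(\bx)$ by Lipschitzness of $\sigma^2$, and whose fluctuation is a normalized sum of $\chi^2$-type variables, handled by Bernstein since $\sigma\le\sigma_1$); a cross term $\frac{2}{k_n}\sum \sigma(\bX_{(i,n)})\varepsilon_i(f^*(\bX_{(i,n)})-\hat f(\bX_{(i,n)}))$ controlled using the already-established sup-norm rate for $\hat f$ together with a sub-Gaussian bound; and a term $\frac{1}{k_n}\sum (f^*(\bX_{(i,n)})-\hat f(\bX_{(i,n)}))^2$ bounded by the square of the $\hat f$ sup-norm rate, which is of smaller order. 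The thresholding only helps: since $1/s \le \sigma_0^2 \le \sigma_1^2 \le s$ eventually (here $s$ is effectively $+\infty$ / a fixed large constant in this statement, or one notes the truncation is inactive on the relevant event), $|\hat\sigma^2-\sigma^2| \le |\tilde\sigma^2-\sigma^2|$ as already observed in Section~\ref{sec:estimation}. Assembling these pieces and taking the $(1+\alpha)$-moment via tail integration gives the second bound, with the same rate $\log(n)^{1+\alpha} n^{-(1+\alpha)/(2+d)}$ (the cross and quadratic terms contribute only lower-order corrections, absorbed into the constant $C$).

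\medskip

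\textbf{Main obstacle.} The delicate step is the uniform (in $\bx\in\mathcal{C}$) control of the $k_n$-th nearest-neighbor radius and the uniform control of the stochastic averages: one cannot simply take a union bound over a continuum, so the argument must exploit that the $k$NN estimator is piecewise constant with respect to a partition of $\mathcal{C}$ into finitely many (polynomially in $n$) cells determined by the ordering of distances to the sample points, reducing the sup to a finite maximum of polynomially many sub-Gaussian/sub-exponential tails. Making the cardinality bound and the regularity-of-$\mathcal{C}$ geometry precise — ensuring the NN-radius bound $O((k_n/n)^{1/d})$ holds with probability at least $1 - n^{-c}$ for every $\bx$ simultaneously — is the technical heart, and it is exactly the content of the cited reference~\cite{Denis_Hebiri_Zaoui20} (building on~\cite{Gyofri_Kohler_Krzyzak_Walk02}); I would cite it rather than redo it. Everything else is the routine bias-variance bookkeeping and Gaussian moment estimates sketched above.
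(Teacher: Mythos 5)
The paper gives no proof of this theorem at all: it is stated as a borrowed result whose proof "is provided in \cite{Denis_Hebiri_Zaoui20}", which is exactly where you also end up deferring the technically hard step (the uniform control of the nearest-neighbour radii and of the stochastic averages over $\bx\in\mathcal{C}$). Your bias--variance sketch is a reasonable reconstruction of how that cited result is obtained, so your handling matches the paper's — a citation for the substance, with no gap introduced by your outline.
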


\section{Proof of Section~\ref{sec:framework}}
\label{sec:proofS2}

In this section, we provide proofs related to the optimal confidence and to the excess-risk formula

\begin{proof}[Proof of Proposition~\ref{prop:oraclePredictor}] First, let us consider the Lagrangian of the optimization problem~\ref{eq:eqOracle}. It can be written as
$$
H(\Gamma,\lambda)= \mathbb{P}\left(Y\notin \Gamma (\bX)\right) + \lambda \left(\mathbb{E}_{\bX}[L(\Gamma(\bX)]-\ell\right)\enspace ,
$$
where $\lambda \geq 0$ is a dual variable of the problem. Since, $$
\mathbb{P}\left(Y\in \Gamma (\bX)\right) = \mathbb{E}_{\bX}\left[\mathbb{E}\left[\one_{\{Y\in\Gamma (\bX)\}}|\bX\right]\right] = \mathbb{E}_{\bX}\left[\int_{\bbR} p(y|\bX)\one_{\{y\in\Gamma (\bX)\}}dy\right]\enspace,
$$
the Lagrangian reads as
\begin{equation}
    \label{eq:proofLagrangian}
    H(\Gamma,\lambda)= 1 - \lambda \ell - \mathbb{E}_{\bX}\left[\int_{\bbR}(p(y|\bX)-\lambda)\one_{\{y\in\Gamma (\bX)\}}dy\right] \enspace.
\end{equation}
Minimizing \emph{w.r.t.} $\Gamma$ leads to an optimal solution that can be written for all $\lambda\geq 0$ and all $\bx \in \bbR^d$ as
$$
\Gamma^*(\lambda,\bx) = \left\{y\in \mathbb{R}: p(y|\bX)\geq \lambda\right\}\enspace .
$$
Injecting this value into~\ref{eq:proofLagrangian} gives
$$
H(\Gamma^*(\lambda,\bX),\lambda)= 1 - \lambda \ell - \mathbb{E}_{\bX}\left[\int_{\bbR}(p(y|\bX)-\lambda)_+\one_{\{y\in\Gamma^* (\lambda,\bX)\}}dy\right] \enspace,
$$
where $(\cdot)_+$ stands for the positive part. First order optimality conditions for convex non-smooth minimization
problems implies $0 \in \partial H(\Gamma^*(\lambda_{\ell}^*,\bX),\lambda_{\ell}^*)$ where $\partial H$ is the sub-differential of $H$. Therefore, using the Fundamental Theorem of Calculus, we get 
$\mathbb{E}_{\bX} \left[\int_{\bbR} \one_{\{y\in\Gamma^* (\lambda_{\ell}^*,\bX)\}}dy\right] = \ell .$
But, using the above definition of $\Gamma^*$ we can write by  Fubini's theorem the left hand side term as $\mathbb{E}_{\bX} \left[\int_{\bbR} \one_{\{y\in\Gamma^* (\lambda_{\ell}^*,\bX)\}}dy\right] = \int_{\bbR} \mathbb{P}\left( (p(y|\bX) \geq \lambda_{\ell}^* \right) dy = G(\lambda_{\ell}^*)$. We then conclude that $\lambda_{\ell}^* = G^{-1}(\ell)$. Notice that for this value, we have 
\begin{eqnarray*}
\mathcal{L}(\Gamma^*) = \mathbb{E}_{\bX}[L(\Gamma^{*}(\lambda_{\ell}^*, \bX)] = \mathbb{E}_{\bX}\left[\int\one_{\{y\in\Gamma^{*}(\lambda_{\ell}^*, \bX) \}} dy\right]= G(\lambda_{\ell}^*)=\ell \enspace . 
\end{eqnarray*}
\end{proof}

\begin{proof}[Proof of Proposition~\ref{prop:propExcessRisk}]
Let $\ell\geq 0$. Considering a similar decomposition as in the proof of Proposition~\ref{prop:oraclePredictor}, we can write the error rate of a predictor $\Gamma$ as
\begin{equation}
\label{eq:ProofexcessRisk}
R_{\ell}(\Gamma)=1 -\mathbb{E}_{\bX}\left[\int_\bbR (p(y|\bX)-\lambda_{\ell}^*)\one_{\{y\in\Gamma (\bX)\}}dy\right] \enspace .
\end{equation}
Therefore, we deduce
$$
\mathcal{E}_{\ell}\left(\Gamma\right) = \mathbb{E}_{\bX}\left[\int_\bbR (p(y|\bX) - \lambda_{\ell}^* )\left( \one_{\{y\in\Gamma^{*}_{\ell}(\bX)\}} -   \one_{\{y\in\Gamma (\bX)\}} \right)dy\right]\enspace,
$$
and the result follows from the fact that $\one_{\{y\in\Gamma^{*}_{\ell}(\bX)\}} -  \one_{\{y\in\Gamma (\bX)\}} = \sgn(p(y|\bX) - \lambda_{\ell}^* )$ since we have the equality between events $\{ y\in \Gamma^{*}_{\ell}(\bX) \} = \{ p(y|\bX) - \lambda_{\ell}^* \geq 0\}$, where $\sgn:\bbR\to\{-1,1\}$ stands for the sign.
\end{proof}

\section{Proof of Section~\ref{sec:estimation}}
\label{sec:proofS3}
We now consider the theoretical properties of the prediction interval $\hat{\Gamma}$. We first consider its expected length and then derive a finite sample bound on its excess-risk.
\subsection{Length control}
\begin{proof}[Proof of Proposition~\ref{prop:propLength}]
To show this result, we need to introduce some pseudo-oracle predictor that has expected length $\ell$. Let us then define the randomized predictor

\begin{equation}
\label{eq:pseudoOracle}
\bar{\Gamma}(\bX,\zeta) = \{y \in \mathbb{R}: \ \hat{p}(y|\bX,\zeta) \geq \bar{G}^{-1}(\ell)\}   \enspace , 
\end{equation}
where $\bar{G}(t):=\int_{\mathbb{R}} \mathbb{P}_{\bX,\zeta}(\hat{p}(y|\bX,\zeta )\geq t)dy$ for all $t>0$. Here again, the property $\mathcal{L}(\bar{\Gamma}) := \mathbb{E}_{\bX,\zeta}\left[L\left(\bar{\Gamma}(\bX,\zeta)\right)\right] = \ell$ is due to the fact that the conditional on the data $\mathcal{D}_{n}$ the r.v. $\hat{p}(y|\bX,\zeta) $ has no atoms since it is randomized.

Let us now consider the purpose of the proposition. We need to bound $\mathbb{E}\left[ \big|  \mathcal{L}(\hat{\Gamma}) -\ell \big| \right]$. 
We can write
\begin{eqnarray}
\label{eq:decompSize}
\big|  \mathcal{L}(\hat{\Gamma}) -\ell \big|  = \big|\mathcal{L}(\hat{\Gamma})  - \mathcal{L}(\bar{\Gamma})  \big| 
&=&\bigg|\mathbb{E}\left[\int_{\mathbb{R}}\left(\one_{\{\hat{G}(\hat{p}(y|\bX,\zeta))\leq \ell\}}-\one_{\{\bar{G}(\hat{p}(y|\bX,\zeta)) \leq \ell\}}\right)dy\right]\bigg|\\ \nonumber
&\leq & \mathbb{E}\left[\int_{\mathbb{R}}\bigg|\one_{\{\hat{G}(\hat{p}(y|\bX,\zeta)) \leq \ell\}}-\one_{\{\bar{G}(\hat{p}(y|\bX,\zeta)) \leq \ell\}}\bigg|dy\right]\\ \nonumber
&\leq &\mathbb{E}\left[\int_{\mathbb{R}}\one_{\{|\hat{G}(\hat{p}(y|\bX,\zeta))-\bar{G}(\hat{p}(y|\bX,\zeta))|\geq |\bar{G}(\hat{p}(y|\bX,\zeta))-\ell|\}}dy\right]\\ \nonumber
&=& \int_{\mathbb{R}}\mathbb{P}\left(|\hat{G}(\hat{p}(y|\bX,\zeta))-\bar{G}(\hat{p}(y|\bX,\zeta))|\geq |\bar{G}(\hat{p}(y|\bX,\zeta))-\ell|\right)dy \enspace ,
\end{eqnarray}
where we use Fubini's theorem at last. Now notice that the above integral is limited to the compact $[-s,s]$ since, this is the support of the function $\hat{p}(\cdot|\bx,z) $ for all $(\bx,z)\in \bbR^d \times [0,u]$.
To bound this integral, we make use of the peeling technique of~\cite{Audibert_Tsybakov07}. That is, we consider for $\delta>0$ and $y\in [-s,s]$
\begin{eqnarray*}
A_{0}(y)&=& \left\{0\leq |\bar{G} (\hat{p}(y|\bX, \zeta)) -\ell|\leq \delta\right\}\\
A_{j}(y)&=& \left\{2^{j-1}\delta\leq |\bar{G}(\hat{p}(y|\bX, \zeta))-\ell|\leq 2^{j}\delta\right\}, \qquad \text{for} \quad j\geq 1 \enspace .
\end{eqnarray*}
Since for $y\in [-s,s]$, the events $(A_{j}(y))_{j\geq 0}$ are mutually exclusive, we deduce
\begin{multline}
\label{eq:IntPA(y)j1}
\int_{-s}^{s}\mathbb{P}\left(|\hat{{G}} (\hat{p}(y|\bX, \zeta)) -\bar{G}(\hat{p}(y|\bX, \zeta))|\geq |\bar{G}(\hat{p}(y|\bX, \zeta))-\ell|\right)\ dy=\\
\int_{-s}^{s}\sum_{j\geq 0}\mathbb{P}\left(|\hat{{G}} (\hat{p}(y|\bX, \zeta))-\bar{G}(\hat{p}(y|\bX, \zeta))|\geq |\bar{G}(\hat{p}(y|\bX, \zeta))-\ell| \ ,\  A_{j}(y)\right) \ dy \enspace .
\end{multline}
Controlling this term relies on a bound on $\int_{-s}^{s}\mathbb{P}(A_{j}(y))dy  $. 
It is clear that $
0\leq \bar{G}(t)=\int_{-s}^{s} \mathbb{P}_{X}(\hat{p}(y|\bX, \zeta)\geq t|\mathcal{D}_n)dy \leq 2 s $ for all $t\in [0,1]$.
We can apply Lemma~\ref{lem:UnifDist} to say that $\bar{G}(Z_T)$ is uniformly distributed on $[0 , 2s]$ and then, 
for all $j\geq 0$ and $\delta>0$, we deduce that 
\begin{multline}
\label{eq:IntPA(y)j2}
\int_{-s}^{s}\mathbb{P}(A_{j}(y))dy   =  2s \frac{1}{2s}\int_{-s}^{s}\mathbb{P}\left(|\bar{G}(\hat{p}(y|\bX,\zeta))-\ell|
 \leq 2^j\delta \ | \ \mathcal{D}_n\right)dy \\ 
  =  2s \times \mathbb{P}\left(|\bar{G}(Z_T)-\ell | \leq 2^j\delta|\mathcal{D}_n\right)\leq 2s \frac{2^{j+1}\delta}{2s} = 2^{j+1}\delta \enspace .
\end{multline}
Next, let us consider~\eqref{eq:IntPA(y)j1}. We observe that for all $j\geq 1$
\begin{multline}
\label{eq:IntPA(y)j3}
\int_{-s}^{s}\mathbb{P}\left(|\hat{{G}}(\hat{p}(y|\bX,\zeta))-\bar{G}(\hat{p}(y|\bX,\zeta))|\geq |\bar{G}(\hat{p}(y|\bX,\zeta))-\ell| \ , \  A_{j}(y)\right)dy \\ 
\leq 
\int_{-s}^{s}\mathbb{P}\left(|\hat{{G}}(\hat{p}(y|\bX,\zeta))-\bar{G}(\hat{p}(y|\bX,\zeta))|\geq 2^{j-1}\delta \ 
,  \ A_{j}(y)\right)dy  \\
\leq
\int_{-s}^{s}\mathbb{E}_{(\mathcal{D}_n,\bX,\zeta)}\left[\mathbb{P}_{\mathcal{D}_N}\left(|\hat{{G}}(\hat{p}(y|\bX,\zeta))-\bar{G}(\hat{p}(y|\bX,\zeta))|\geq 2^{j-1}\delta \right)    \one_{ A_{j}(y)}\right]dy.
\end{multline}
In Section~\ref{subsec:dataDrivenProce}, we have presented the predictor $\hat{\Gamma}$ that relies on the function $\hat{G}$ which is discretized. On the other hand, $\bar{G}$ is not discretized. Because of this difference, it is convenient, in order to control~\eqref{eq:IntPA(y)j3}, to provide some additional notation. Let us define 
\begin{equation*}
\hat{\bar{G}}(t):= \frac{1}{N} \sum_{i=1}^N \int_{-s}^s \one_{ \{   \hat{p}(y|\bX_{n+i}, \zeta_i )  \geq t \} } dy \enspace .
\end{equation*}
Then for all $y\in [-s,s]$, conditional on $(\mathcal{D}_{n},\bX,\zeta)$, the probability in Eq.~\eqref{eq:IntPA(y)j3} is bounded as follows
\begin{eqnarray}
\label{eqproof:triangleG}
\mathbb{P}_{\mathcal{D}_N}\left(|\hat{{G}}(\hat{p}(y|\bX,\zeta))-\bar{G}(\hat{p}(y|\bX,\zeta))|\geq 2^{j-1}\delta\right)  \leq & \nonumber \\
\mathbb{P}_{\mathcal{D}_N}\left(|\hat{\bar{G}}(\hat{p}(y|\bX,\zeta))-\bar{G}(\hat{p}(y|\bX,\zeta))|\geq 2^{j-1}\frac{\delta}{2}\right)
+ & \mathbb{P}_{\mathcal{D}_N}\left(|\hat{\bar{G}}(\hat{p}(y|\bX,\zeta))-\hat{G}(\hat{p}(y|\bX,\zeta))|\geq 2^{j-1}\frac{\delta}{2}\right) \enspace.
\end{eqnarray}
These two last terms are treated in different ways. For the first one, we observe that for all $t\in[0,1]$
\begin{eqnarray*}
    |\hat{G}(t)-\hat{\bar{G}}(t)| & = &  \left|  \frac{1}{N} \sum_{i=1}^N  \sum_{k=1}^M \left( \int_{y_k}^{y_{k+1}} \one_{\{\hat{p}(y|\bX_{n+i}, \zeta_i) \geq t   \}}  -    \one_{ \{   \hat{p}(y_{k}|\bX_{n+i}, \zeta_i )  \geq t \} }   \right)  dy \right| 
    \\
    & \leq & \frac{1}{N} \sum_{i=1}^N  \sum_{k=1}^M \left( \int_{y_k}^{y_{k+1}} \left| \one_{\{\hat{p}(y|\bX_{n+i}, \zeta_i) \geq t   \}}  -    \one_{ \{   \hat{p}(y_{k}|\bX_{n+i}, \zeta_i )  \geq t \} }  \right|  \right)  dy \enspace.
\end{eqnarray*}
We recall that for all $|y| \leq s $, we have $\hat{p}(y|\bx, \zeta ) = \hat{p}(y|\bx) + \zeta  $.
Because, conditional on $\mathcal{D}_n$, the function $\hat{p}(\cdot|\bx )$ is a Gaussian density and since the perturbation $\zeta $ acts on each $y$ in the same way, it turns out that the function $\hat{p}(.|\bx, \zeta )$ is continuously increasing and then decreasing with a maximum at $y=\hat{f}(\bx)$. Therefore, for any fixed $t$ the indicators $\one_{\{\hat{p}(y|\bX_{n+i}, \zeta_i) \geq t   \}} $ and $\one_{ \{   \hat{p}(y_{k}|\bX_{n+i}, \zeta_i )  \geq t \} } $ differ at most in $2$ intervals of the form $[y_k,y_{k+1}]$. Then we deduce that
\begin{equation*}
|\hat{G}(t)-\hat{\bar{G}}(t)| \leq 2 \times \frac{2s}{M} \enspace.
\end{equation*}
Injecting this inequality to~\eqref{eqproof:triangleG} gives
\begin{multline}
\label{eq:FinalDecomp}
\mathbb{P}_{\mathcal{D}_N}\left(|\hat{{G}}(\hat{p}(y|\bX,\zeta))-\bar{G}(\hat{p}(y|\bX,\zeta))|\geq 2^{j-1}\delta\right)  \leq \\
\mathbb{P}_{\mathcal{D}_N}\left(|\hat{\bar{G}}(\hat{p}(y|\bX,\zeta))-\bar{G}(\hat{p}(y|\bX,\zeta))|\geq 2^{j-1}\frac{\delta}{2}\right)
+ \one_{\{4s/M \geq 2^{j-2}\delta\}}\enspace.
\end{multline}
Let us now consider the second term. Conditional on $(\mathcal{D}_{n},\bX,\zeta)$, the random variable $\hat{\bar{G}}(\hat{p}(y|\bX,\zeta))$ is an empirical mean of \iid random variables of common mean $\bar{G}(\hat{p}(y|X,\zeta))\in [0,2s]$, we deduce from Hoeffding's inequality that 
\begin{eqnarray*}
\mathbb{P}_{\mathcal{D}_N}\left(|\hat{\bar{G}}(\hat{p}(y|\bX,\zeta))-\bar{G}(\hat{p}(y|\bX,\zeta))|\geq 2^{j-2}\delta|\mathcal{D}_{n},\bX\right)\leq 2\exp\left(\frac{-N\delta^{2}2^{2j-1}}{16s^2}\right) \enspace .
\end{eqnarray*}
Therefore, from Inequalities~\eqref{eq:IntPA(y)j1},~\eqref{eq:IntPA(y)j2},~\eqref{eq:IntPA(y)j3}, and~\eqref{eq:FinalDecomp} one gets for $\delta= \frac{4s}{\sqrt{N}}$ and $M > 4\sqrt{N}$
\begin{multline}
\label{eq:sizeProbRate}
\int_{-s}^{s}\mathbb{P}\left(|\hat{G}(\hat{p}(y|\bX,\zeta))-\bar{G}(\hat{p}(y|\bX,\zeta))|\geq |\bar{G}(\hat{p}(y|\bX,\zeta))-\ell|\right)dy \\  \leq\int_{-s}^{s} \mathbb{P}(A_{0}(y))dy + \sum_{j\geq 1}2\exp\left(\frac{-N\delta^{2}2^{2j-1}}{16s^2}\right)\int_{-s}^{s}\mathbb{P}(A_{j}(y))dy \\ 
\leq 2\delta +\delta \sum_{j\geq 1}2^{j+2}\exp\left(\frac{-N\delta^{2}2^{2j-1}}{16s^2}\right)
\leq \dfrac{Cs}{\sqrt{N}}\enspace.
\end{multline}
\end{proof}

\subsection{Excess-risk control}

\begin{proof}[Proof of Proposition~\ref{prop:propConsist}]

Throughout the proof, we denote $\bar{\lambda}_{\ell} := \bar{G}^{-1}(\ell)$,
where $\bar{G}$ is defined in Equation~\eqref{eq:pseudoOracle}.
We start with the following decomposition.
\begin{equation}
\label{eq:excessRiskFirstdecomp}
\mathcal{E}_{\ell}\left(\hat{\Gamma}\right) = \mathcal{E}\left(\bar{\Gamma}\right)
+ \left(R_{\ell}(\hat{\Gamma}) - R_{\ell}(\bar{\Gamma})\right) \enspace.
\end{equation}
For the second term of the {\it r.h.s.} in the above equation, thanks to Equation~\eqref{eq:ProofexcessRisk}, we have that
\begin{equation*}
R_{\ell}(\hat{\Gamma}) - R_{\ell}(\bar{\Gamma}) = \mathbb{E}_{{\bf X},\zeta} \left[\int_{\mathbb{R}}\left(p(y|\bf X)-\lambda_{\ell} \right) \left( \one_{\{y\in \bar{\Gamma}({\bf X},\zeta)\}} - \one_{\{y\in \hat{\Gamma}({\bf X},\zeta)\} } \right) dy\right] \enspace. 
\end{equation*}
From Assumption~\ref{ass:assSigma}, we have that $\left|p(y|\bf X)-\lambda_{\ell} \right|$ is bounded by $C_1 > 0$ which depends on $\sigma_0$. Hence, we deduce that
\begin{equation*}
\mathbb{E}\left[ \left| R_{\ell}(\hat{\Gamma}) - R_{\ell}(\bar{\Gamma}) \right| \right]
 \leq C_1 \mathbb{E}\left[\int_\mathbb{R} \left| \one_{\{y\in \bar{\Gamma}({\bf X},\zeta)\}} - \one_{\{y\in \hat{\Gamma}({\bf X},\zeta)\}}  \right| dy \right]\enspace.
\end{equation*}
This last inequality can be rewritten as
\begin{equation*}
\mathbb{E}\left[\left| R_{\ell}(\hat{\Gamma}) - R_{\ell}(\bar{\Gamma}) \right| \right]
 \leq C_1 \mathbb{E}\left[\int_{\mathbb{R}}\bigg|\one_{\{\hat{G}(\hat{p}(y|\bX,\zeta)) \leq \ell\}}-\one_{\{\bar{G}(\hat{p}(y|\bX,\zeta)) \leq \ell\}}\bigg|dy\right]\enspace .
\end{equation*}
Therefore, from Equation~\ref{eq:decompSize}, and~\eqref{eq:sizeProbRate}, we deduce
\begin{equation}
\label{eq:diffRiskR}
\mathbb{E}\left[\left| R_l(\hat{\Gamma}) - R_l(\bar{\Gamma}) \right| \right]
 \leq C \dfrac{s}{\sqrt{N}} \enspace .
\end{equation}
Now we bound the first term in the {\it r.h.s.} in Equation~\eqref{eq:excessRiskFirstdecomp}.
Thanks to Proposition~\ref{prop:propExcessRisk}, we have that
\begin{equation*}
\mathcal{E}_{\ell}(\bar{\Gamma})= \mathbb{E}_{{\bf X},\zeta}\left[\int_{\bar{\Gamma}(\bX, \zeta)\triangle \Gamma_{\ell}^{*}(\bX)} \left|p(y|\bX)-\lambda^*_\ell \right| \ dy \right]\enspace .
\end{equation*}
Now, we consider the following cases
\begin{itemize}
     \item[$\bullet$]
     If $y\in \bar{\Gamma}(\bX, \zeta) \setminus \Gamma_{\ell}^{*}(\bX)$, we have that $p(y|\bX)<\lambda^*_\ell$ and $\hat{p}(y|\bX,\zeta)\geq \bar{\lambda}_\ell$.
     Therefore, 
\begin{equation*}
|p(y|\bX)-\lambda^*_{\ell}|=(\lambda^*_{\ell}- \bar{\lambda}_\ell)+(\bar{\lambda}_\ell-\hat{p}(y|\bX,\zeta))+(\hat{p}(y|\bX,\zeta)-p(y|\bX))\enspace.
\end{equation*}
Using the fact that $\bar{\lambda}_\ell- \hat{p}(y|\bX,\zeta)\leq 0$, we get
\begin{equation*}
\int |p(y|\bX)-\lambda^*_{\ell}|\one_{\{y \in \bar{\Gamma}(\bX, \zeta) \setminus \Gamma_{\ell}^{*}(\bX) \}}dy\leq \int\left( ( \lambda^*_{\ell}- \bar{\lambda}_\ell) +\big|\hat{p}(y|\bX,\zeta)-p(y|\bX)\big|\right)\one_{\{y\in \bar{\Gamma}(\bX, \zeta) \setminus \Gamma_{\ell}^{*}(\bX)\}}dy\enspace.
\end{equation*}
 \item[$\bullet$]
     If $y\in \Gamma_{\ell}^{*}(\bX) \setminus  \bar{\Gamma}(\bX, \zeta)$, we have that $p(y|\bX)\geq \lambda^*_\ell$ and $\hat{p}(y|\bX,\zeta)< \bar{\lambda}_\ell$. Therefore, 
\begin{equation*}
|p(y|\bX)-\lambda^*_{\ell}|=(p(y|\bX)-\hat{p}(y|\bX,\zeta))+(\hat{p}(y|\bX,\zeta)-\bar{\lambda}_\ell)+(\bar{\lambda}_\ell-\lambda^*_{\ell}) \enspace .
\end{equation*}
Using the fact that $ \hat{p}(y|\bX,\zeta)-\bar{\lambda}_\ell < 0$, we get
\begin{equation*}
\int |p(y|\bX)-\lambda_{\ell}|\one_{\{y\in \Gamma_{\ell}^{*}(\bX) \setminus  \bar{\Gamma}(\bX, \zeta)\}} dy\leq \int\left((\bar{\lambda}_\ell-\lambda^*_{\ell}) +\big|\hat{p}(y|\bX,\zeta)-p(y|\bX)\big|\right)\one_{\{y\in \Gamma_{\ell}^{*}(\bX) \setminus  \bar{\Gamma}(\bX, \zeta) \}}dy \enspace.
\end{equation*}
\end{itemize}
From the above considerations, we deduce the following inequality
\begin{multline*}
\mathbb{E}_{{\bf X},\zeta}\left[\int_{\bar{\Gamma}(\bX, \zeta)\triangle \Gamma_{\ell}^{*}(\bX)} \left|p(y|\bX)-\lambda^*_\ell \right| \ dy \right] 
\\
\leq 
| \bar{\lambda}_\ell-\lambda^*_{\ell} | \  \mathbb{E}\left[\int_{\mathbb{R}}  \one_{\{y\in {\bar{\Gamma}(\bX, \zeta)\triangle \Gamma_{\ell}^{*}(\bX)} \}} {\rm d}y\right]
+
\mathbb{E}\left[\int_{\mathbb{R}}\left|\hat{p}(y|\bX,\zeta)-p(y|\bX)\right|{\rm d}y\right] 
\\
\leq 
| \bar{\lambda}_\ell-\lambda^*_{\ell} | \times 
(\mathcal{L}(\bar{\Gamma})  - \mathcal{L}(\Gamma^*))  
+
\mathbb{E}\left[\int_{\mathbb{R}}\left|\hat{p}(y|\bX)-p(y|\bX)\right|{\rm d}y\right] + 2su \enspace ,
\end{multline*}
where the last inequality is due to the fact that $\hat{p}(y|\bX,\zeta) = \hat{p}(y|\bX) + \zeta \one_{y\in[-s,s]}$ with $ |\zeta| \leq u$. But $\mathcal{L}(\bar{\Gamma})   = \mathcal{L}(\Gamma^*) = \ell $ by construction. Then,
$$
\mathbb{E}\left[\mathcal{E}_{\ell}(\bar{\Gamma})\right] \leq 
\mathbb{E}\left[\int_{\mathbb{R}}\left|\hat{p}(y|\bX)-p(y|\bX)\right|{\rm d}y\right] + 2su \enspace .
$$
Injecting this last inequality and~\eqref{eq:diffRiskR} into ~\eqref{eq:excessRiskFirstdecomp} gives the announced result.
\end{proof}

\subsection{Consistency Result}

This section is devoted to the proof of Theorem~\ref{thm:consistance}. We first provide a result on the $L_1$-integrated estimation error of $\hat{p}$.
\begin{proposition}
\label{prop:propEstim}
Under Assumption~\ref{ass:assSigma}, we have that 
\begin{multline*}
\mathbb{E}\left[\int_{\mathbb{R}}\left|\hat{p}(y|\bX)-p(y|\bX)\right|{\rm d}y\right]
 \leq \\  
 C\left(\sqrt{s}\mathbb{E}\left[(\hat{f}(\bX)-f^*(\bX))^2\right] + \mathbb{E}\left[\left|\hat{f}(\bX)-f^*(\bX)\right|\right]\right)\\  + Cs^{5/2}\mathbb{E}\left[|\hat{\sigma}^2(\bX)-\sigma^2(\bX)|\right] \enspace ,
\end{multline*}
where $C>0$ is a constant which depends on $\sigma_0$ and $\sigma_1$ in Assumption~\ref{ass:assSigma}.
\end{proposition}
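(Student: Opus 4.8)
The plan is to establish, for every fixed $\bx$, a deterministic pointwise bound and then integrate it against $\mathbb{P}_{\bX}$, using $\sigma_0\le\sigma(\bX)\le\sigma_1$ a.s.\ and the fact that the thresholding forces $s^{-1}\le\hat\sigma^2(\bX)\le s$. Fix $\bx$ and abbreviate $\hat f=\hat f(\bx)$, $f^*=f^*(\bx)$, $\hat v=\hat\sigma^2(\bx)$, $v^*=\sigma^2(\bx)$, and $\phi_{m,v}(y)=(2\pi v)^{-1/2}\exp\bigl(-(y-m)^2/(2v)\bigr)$. Since $p(\cdot|\bx)=\phi_{f^*,v^*}$ while $\hat p(\cdot|\bx)$ coincides with $\phi_{\hat f,\hat v}$ on $[-s,s]$ and vanishes off it,
\[
\int_{\bbR}\bigl|\hat p(y|\bx)-p(y|\bx)\bigr|\,dy\;\le\;\int_{\bbR}\bigl|\phi_{\hat f,\hat v}(y)-\phi_{f^*,v^*}(y)\bigr|\,dy\;+\;\int_{|y|>s}\phi_{f^*,v^*}(y)\,dy .
\]
The last term is a Gaussian tail (at most $2\exp\bigl(-s^2/(8\sigma_1^2)\bigr)$ when $s>2|f^*(\bx)|$, using $\sigma\le\sigma_1$), which is of lower order and I would dispose of it separately; the core is to bound the $L_1$ distance between the Gaussian densities $\mathcal N(\hat f,\hat v)$ and $\mathcal N(f^*,v^*)$.

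For that I would change the mean and the variance one at a time, inserting the intermediate density $\phi_{\hat f,v^*}$, so that $|\phi_{\hat f,\hat v}-\phi_{f^*,v^*}|\le|\phi_{\hat f,\hat v}-\phi_{\hat f,v^*}|+|\phi_{\hat f,v^*}-\phi_{f^*,v^*}|$. The \emph{mean} term is handled via $\phi_{\hat f,v^*}-\phi_{f^*,v^*}=(\hat f-f^*)\int_0^1 (v^*)^{-1}(y-m_t)\phi_{m_t,v^*}(y)\,dt$ with $m_t=(1-t)f^*+t\hat f$: integrating in $y$ and using $\int_{\bbR}|y-m_t|\phi_{m_t,v^*}(y)\,dy=\sqrt{2v^*/\pi}$ together with $v^*\ge\sigma_0^2$ gives $\int_{\bbR}|\phi_{\hat f,v^*}-\phi_{f^*,v^*}|\,dy\le C\sigma_0^{-1}|\hat f-f^*|$. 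The \emph{variance} term is handled via $\phi_{\hat f,\hat v}-\phi_{\hat f,v^*}=(\hat v-v^*)\int_0^1\partial_v\phi_{\hat f,v_t}(y)\,dt$ with $v_t=(1-t)v^*+t\hat v$ and $\partial_v\phi_{m,v}(y)=\tfrac12\bigl((y-m)^2v^{-2}-v^{-1}\bigr)\phi_{m,v}(y)$: when integrating $|\partial_v\phi_{\hat f,v_t}|$ over $[-s,s]$ I would keep the Gaussian weight for the quadratic piece $(y-\hat f)^2v_t^{-2}\phi_{\hat f,v_t}(y)$ (whose integral over $\bbR$ is $v_t^{-1}$) but use only the crude bound $\phi_{\hat f,v_t}\le(2\pi v_t)^{-1/2}$ for the flat piece $v_t^{-1}\phi_{\hat f,v_t}(y)$ (whose integral over the window of length $2s$ is $\le 2s(2\pi)^{-1/2}v_t^{-3/2}$). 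Since $v_t\ge s^{-1}$, these are $\le s$ and $\le Cs^{5/2}$ respectively, so $\int_{-s}^{s}|\phi_{\hat f,\hat v}-\phi_{\hat f,v^*}|\,dy\le Cs^{5/2}|\hat v-v^*|$.

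The one genuinely delicate point is this variance step: the thresholding guarantees only $\hat\sigma^2\ge s^{-1}$, and since $s^{-1}<\sigma_0^2$ in the regime of interest, the interpolating variances $v_t$ may be as small as $s^{-1}$, so $v_t^{-1}\le s$ and $v_t^{-3/2}\le s^{3/2}$, which — multiplied by the length-$2s$ integration window — is exactly what forces the $s^{5/2}$ factor; keeping the integrable Gaussian tail for the quadratic piece is what prevents the power of $s$ from getting worse. Finally, collecting the mean term, the variance term and the negligible tail, and taking expectation over $\bX$ (with $\sigma_0\le\sigma(\bX)\le\sigma_1$ and $s^{-1}\le\hat\sigma^2(\bX)\le s$), one gets a bound of the form $C\,\mathbb{E}|\hat f(\bX)-f^*(\bX)|+C\,\mathbb{E}[(\hat f(\bX)-f^*(\bX))^2]+C\,s^{5/2}\,\mathbb{E}|\hat\sigma^2(\bX)-\sigma^2(\bX)|$, with $C=C(\sigma_0,\sigma_1)$; since $s\ge 1$ we have $\mathbb{E}[(\hat f-f^*)^2]\le\sqrt s\,\mathbb{E}[(\hat f-f^*)^2]$, which is the form stated in the proposition.
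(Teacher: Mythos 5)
Your argument is correct and establishes the stated bound, but it organizes the comparison of the two Gaussian densities differently from the paper. The paper splits $|\hat p(y|\bX)-p(y|\bX)|$ into \emph{three} pieces (change the normalizing factor, then the mean in the exponent, then the variance in the exponent) and controls each pointwise via the $1$-Lipschitz property of $x\mapsto e^{-x}$ on $\mathbb{R}_{+}$; the term $\sqrt{s}\,\mathbb{E}[(\hat f(\bX)-f^*(\bX))^2]$ appears there precisely because the mean is moved while the estimated variance (possibly as small as $1/s$) sits in the exponent. You instead insert the single intermediate density $\mathcal{N}(\hat f,\sigma^2)$ and use exact interpolation (fundamental theorem of calculus) in the mean and then in the variance parameter. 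This buys a cleaner mean step: moving the mean at the true variance $\sigma^2(\bX)\ge\sigma_0^2$ gives $C|\hat f-f^*|$ alone, so the quadratic term in the statement is not even needed (your remark that $s\ge 1$ just embeds your sharper bound into the stated one). Your variance step correctly recovers $Cs^{5/2}|\hat\sigma^2-\sigma^2|$; note that keeping the Gaussian weight for the flat piece as well, via $\int_{\mathbb{R}} v_t^{-1}\phi_{\hat f,v_t}(y)\,dy=v_t^{-1}\le s$, would even give the sharper factor $s$, so your crude sup-bound is only a harmless loss. One caveat concerns your additive tail term $\int_{|y|>s}p(y|\bX)\,dy$: it cannot be ``disposed of'' within this proposition, since it is not dominated by the right-hand side (take perfect estimators), and your pointwise tail bound requires $s>2|f^*(\bx)|$, which Assumption~\ref{ass:assSigma} alone does not provide. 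The paper's own proof silently drops the truncation indicator, i.e.\ it bounds the $L_1$ distance between the two untruncated Gaussian densities, and the truncation tail is handled only later, in the proof of Theorem~\ref{thm:consistance} (Equation~\eqref{eq:decompL1Risk}), where Assumption~\ref{ass:assFstar} is available; with the proposition read in that way, your argument proves exactly the intended statement.
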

\begin{proof} 
To build this proof, we use the triangle inequality to split the term $\left|\hat{p}(y|\bX)-p(y|\bX)\right|$ into $3$. We then have to consider each of these terms consecutively.
The first of these terms can be bounded as follows:
\begin{multline}
\label{eq:eq1bis}
\left|\dfrac{1}{\sqrt{2\pi\hat{\sigma}^2(\bX)}} \exp\left(-\dfrac{(y-\hat{f}(\bX))^2}{2\hat{\sigma}^2(\bX)}\right)   -  \dfrac{1}{\sqrt{2\pi{\sigma^2(\bX)}}} \exp\left(-\dfrac{(y-\hat{f}(\bX))^2}{2\hat{\sigma}^2(\bX)}\right)\right| 
\\
\leq  
\dfrac{1}{\sqrt{2\pi\hat{\sigma}^2(\bX)}} \exp\left(-\dfrac{(y-\hat{f}(\bX))^2}{2\hat{\sigma}^2(\bX)}\right) \left|1-\dfrac{\hat{\sigma}(\bX)}{\sigma(\bX)} \right| 
\\ 
= \dfrac{1}{\sqrt{2\pi\hat{\sigma}^2(\bX)}} \exp\left(-\dfrac{(y-\hat{f}(\bX))^2}{2\hat{\sigma}^2(\bX)}\right) \left|\dfrac{\sigma(\bX)-\hat{\sigma}(\bX)}{\sigma(\bX)} \right|\enspace .
\end{multline}
This upper-bound consists of two parts. One part which is the density of a Gaussian random variable (whose integral \emph{w.r.t.} $y$ is $1$) and a second term which is independent of $y$. Observe that this second term $|\sigma(\bX)-\hat{\sigma}(\bX)|$ is of the same order as $|\sigma^2(\bX)-\hat{\sigma}^2(\bX)|$. 
Indeed, notice that when $\sigma(\bX) > \hat{\sigma}(\bX)$
$$
\sigma^2(\bX)-\hat{\sigma}^2(\bX) = (\sigma(\bX)-\hat{\sigma}(\bX)) ( \sigma(\bX) + \hat{\sigma}(\bX)) \geq \left(  \sigma_0 + \dfrac{1}{\sqrt{s}} \right) (\sigma(\bX)-\hat{\sigma}(\bX))\enspace,
$$
where in the last inequality, we use Assumption~\ref{ass:assSigma} and the fact that $\hat{\sigma}(X) \geq 1/\sqrt{s}$.
Written differently, this means that $$ | \sigma(\bX)-\hat{\sigma}(\bX) |  \leq  \dfrac{\sqrt{s}}{1 +  \sigma_0 \sqrt{s} }  | \sigma^2(\bX)-\hat{\sigma}^2(\bX) | \leq \dfrac{ \sigma_0 \sqrt{s}}{1 +  \sigma_0 \sqrt{s} }  \dfrac{| \sigma^2(\bX)-\hat{\sigma}^2(\bX) |  } {\sigma_0}  \leq C | \sigma^2(\bX)-\hat{\sigma}^2(\bX) |\enspace,  $$
since $1/\sigma_0 \leq C$. The same reasoning holds in the case where $\sigma(\bX) < \hat{\sigma}(\bX)$ and then we conclude that 
$$
| \sigma(\bX)-\hat{\sigma}(\bX) |  \leq C | \sigma^2(\bX)-\hat{\sigma}^2(\bX) |\enspace.
$$
Injecting this bound into~\eqref{eq:eq1bis} and using again Assumption~\ref{ass:assSigma}, we deduce that
\begin{multline}
\label{eq:eq1}
\int_{\mathbb{R}}\left|\dfrac{1}{\sqrt{2\pi\hat{\sigma}^2(\bX)}} \exp\left(-\dfrac{(y-\hat{f}(\bX))^2}{2\hat{\sigma}^2(\bX)}\right)   -  \dfrac{1}{\sqrt{2\pi{\sigma^2(\bX)}}} \exp\left(-\dfrac{(y-\hat{f}(\bX))^2}{2\hat{\sigma}^2(\bX)}\right)\right|dy\\
\leq C  \left|\sigma(\bX)-\hat{\sigma}(\bX)\right|
\leq C 
\left|\sigma^2(\bX)-\hat{\sigma}^2(\bX)\right| \enspace .
\end{multline}
Let us now consider the second term in the decomposition of $\left|\hat{p}(y|\bX)-p(y|\bX)\right|$.
Since $x \mapsto \exp(-x)$ is $1$-Lipschitz on $\mathbb{R}_{+}$, from Assumption~\ref{ass:assSigma} we have that in the case where $(y-\hat{f}(\bX))^2 \geq  (y-f^*(\bX))^2$
\begin{multline*}
\left|\dfrac{1}{\sqrt{2\pi{\sigma^2(\bX)}}} \exp\left(-\dfrac{(y-\hat{f}(\bX))^2}{2\hat{\sigma}^2(\bX)}\right) -  \dfrac{1}{\sqrt{2\pi{\sigma^2(\bX)}}} \exp\left(-\dfrac{(y-f^*(\bX))^2}{2\hat{\sigma}^2(\bX)}\right)\right| \\
 =
\dfrac{1}{\sqrt{2\pi{\sigma^2(\bX)}}}
\exp\left(-\dfrac{(y-f^*(\bX))^2}{2{\hat{\sigma}}^2(\bX)}\right) 
\left| \exp\left( -\left( \dfrac{(y-\hat{f}(\bX))^2}{2\hat{\sigma}^2(\bX)} - \dfrac{(y-f^*(\bX))^2}{2{\hat{\sigma}}^2(\bX)} \right)\right) -1 \right|
\\
\leq \dfrac{1}{\sqrt{2\pi{\sigma}^2(\bX)} \times 2\hat{\sigma}^2(\bX)} \exp\left(-\dfrac{(y-f^*(\bX))^2}{2\hat{\sigma}^2(\bX)}\right)\left|(y-\hat{f}(\bX))^2- (y-f^*(\bX))^2\right|
\\
\leq \dfrac{C}{\sqrt{2\pi\hat{\sigma}^2(\bX)}\hat{\sigma}(\bX)} \exp\left(-\dfrac{(y-f^*(\bX))^2}{2\hat{\sigma}^2(\bX)}\right)\left|(y-\hat{f}(\bX))^2- (y-f^*(\bX))^2\right| \enspace.
\end{multline*}
Using the following decomposition
$$(y-\hat{f}(\bX))^2- (y-f^*(\bX))^2 = (\hat{f}(\bX)-f^*(\bX))^2 +2(y-f^*(\bX)(f^*(\bX)-\hat{f}(\bX))\enspace,$$
we deduce
\begin{multline}
\label{eq:eq2bis}
\left|\dfrac{1}{\sqrt{2\pi{\sigma^2(\bX)}}} \exp\left(-\dfrac{(y-\hat{f}(\bX))^2}{2\hat{\sigma}^2(\bX)}\right) -  \dfrac{1}{\sqrt{2\pi{\sigma^2(\bX)}}} \exp\left(-\dfrac{(y-f^*(\bX))^2}{2{\hat{\sigma}}^2(\bX)}\right)\right| \\
\leq \dfrac{C}{\sqrt{2\pi\hat{\sigma}^2(\bX)}\hat{\sigma}(\bX)} \exp\left(-\dfrac{(y-f^*(\bX))^2}{2\hat{\sigma}^2(\bX)}\right)\left(\left(\hat{f}(\bX)-f^*(\bX)\right)^2 + |y- f^*(\bX)||\hat{f}(\bX)-f^*(\bX)|\right) \enspace.
\end{multline}
In the case where $(y-\hat{f}(\bX))^2 \leq (y-f^*(\bX))^2$, we obtain similar bound as in the above equation by switching the role of $\hat{f}$ by $f^*$. 
Notice that $\int_{\mathbb{R}}  \dfrac{1}{\sqrt{2\pi\hat{\sigma}^2(\bX)}}  \exp \left(-\dfrac{(y-f^*(\bX))^2}{2\hat{\sigma}^2(\bX)}\right) \times |y- f^*(\bX)|
dy $ is the expectation of the r.v. $| Y-f^*(\bX) |$ where $Y$ is Gaussian with expectation $f^*(\bX)$ and variance $\hat{\sigma}^2(\bX)$.
Therefore, using the fact that
$\mathbb{E}\left[\left|Z-\mathbb{E}\left[Z\right] \right|\right] \leq \sqrt{{\rm Var}(Z)}$ for any real valued random variable $Z$, we get
\begin{multline*}
\int_{\mathbb{R}} \left|\dfrac{1}{\sqrt{2\pi{\sigma^2(\bX)}}} \exp\left(-\dfrac{(y-\hat{f}(\bX))^2}{2\hat{\sigma}^2(\bX)}\right) -  \dfrac{1}{\sqrt{2\pi{\sigma^2(\bX)}}} \exp\left(-\dfrac{(y-f^*(\bX))^2}{2{\hat{\sigma}}^2(\bX)}\right)\right| dy \\
\leq \dfrac{C}{\hat{\sigma}(\bX)} \left(\left(\hat{f}(\bX)-f^*(\bX)\right)^2 + \hat{\sigma}(\bX)|\hat{f}(\bX)-f^*(\bX)|\right) \enspace.
\end{multline*}
Finally, using that $\hat{\sigma}(\bX) \geq 1/\sqrt{s}$, we deduce
\begin{multline}
\label{eq:eq2}
\int_{\mathbb{R}}\left|\dfrac{1}{\sqrt{2\pi{\sigma^2(\bX)}}} \exp\left(-\dfrac{(y-\hat{f}(\bX))^2}{2\hat{\sigma}^2(\bX)}\right) -  \dfrac{1}{\sqrt{2\pi{\sigma^2(\bX)}}} \exp\left(-\dfrac{(y-f^*(\bX))^2}{2{\hat{\sigma}}^2(\bX)}\right)\right| dy \\
\leq C \left(\sqrt{s}\left(\hat{f}(\bX)-f^*(\bX)\right)^2+|\hat{f}(\bX)-f^*(\bX)|\right)\enspace.
\end{multline}
The remaining term in the decomposition of $\left|\hat{p}(y|\bX)-p(y|\bX)\right|$ is
\begin{multline*}
\left|\dfrac{1}{\sqrt{2\pi{\sigma^2(\bX)}}} \exp\left(-\dfrac{(y-{f}^*(\bX))^2}{2\hat{\sigma}^2(\bX)}\right) -  \dfrac{1}{\sqrt{2\pi{\sigma^2(\bX)}}} \exp\left(-\dfrac{(y-f^*(\bX))^2}{2{\sigma}^2(\bX)}\right)\right| \\ =
\dfrac{1}{\sqrt{2\pi{\sigma^2(\bX)}}}
\exp\left(-\dfrac{(y-f^*(\bX))^2}{2{\sigma}^2(\bX)}\right) \left| \exp\left( -\left( \dfrac{(y-{f}^*(\bX))^2}{2\hat{\sigma}^2(\bX)} - \dfrac{(y-f^*(\bX))^2}{2{\sigma}^2(\bX)} \right)\right)-1 \right|\enspace .
\end{multline*}
Hence, if $\sigma^2(\bX) \geq \hat{\sigma}^2(\bX)$, since $x \mapsto \exp(-x)$ is $1$-Lipschitz on $\mathbb{R}_{+}$,
we deduce from the above inequality that
\begin{multline*}
\left|\dfrac{1}{\sqrt{2\pi{\sigma^2(\bX)}}} \exp\left(-\dfrac{(y-{f}^*(\bX))^2}{2\hat{\sigma}^2(\bX)}\right) -  \dfrac{1}{\sqrt{2\pi{\sigma^2(\bX)}}} \exp\left(-\dfrac{(y-f^*(\bX))^2}{2{\sigma}^2(\bX)}\right)\right| \\ \leq
\dfrac{1}{\sqrt{2\pi{\sigma^2(\bX)}}}
\exp\left(-\dfrac{(y-f^*(\bX))^2}{2{\sigma}^2(\bX)}\right)\left|\dfrac{(y-{f}^*(\bX))^2}{2\hat{\sigma}^2(\bX)} - \dfrac{(y-f^*(\bX))^2}{2{\sigma}^2(\bX)} \right| \enspace .
\end{multline*}
Therefore, from Assumption~\ref{ass:assSigma}, and since $\hat{\sigma}^2(\bX) \geq 1/s$, we get in the case where  $\sigma^2(\bX) \geq \hat{\sigma}^2(\bX)$
\begin{multline}
\label{eq:eqAux1}
\left|\dfrac{1}{\sqrt{2\pi{\sigma^2(\bX)}}} \exp\left(-\dfrac{(y-{f}^*(\bX))^2}{2\hat{\sigma}^2(\bX)}\right) -  \dfrac{1}{\sqrt{2\pi{\sigma^2(\bX)}}} \exp\left(-\dfrac{(y-f^*(\bX))^2}{2{\sigma}^2(\bX)}\right)\right| \\ \leq   \dfrac{1}{\sqrt{2\pi{\sigma^2(\bX)}}}
\exp\left(-\dfrac{(y-f^*(\bX))^2}{2{\sigma}^2(\bX)}\right) Cs(y-f^*(\bX))^2\left|\hat{\sigma}^2(\bX)-\sigma^2(\bX)\right| \enspace.
\end{multline}
In the case where $\sigma^2(\bX) \leq \hat{\sigma}^2(\bX)$, using same arguments and additionally the fact that $\hat{\sigma}^2(\bX) \leq s$, we can obtain
\begin{multline}
\label{eq:eqAux2}
\left|\dfrac{1}{\sqrt{2\pi{\sigma^2(\bX)}}} \exp\left(-\dfrac{(y-{f}^*(\bX))^2}{2\hat{\sigma}^2(\bX)}\right) -  \dfrac{1}{\sqrt{2\pi{\sigma^2(\bX)}}} \exp\left(-\dfrac{(y-f^*(\bX))^2}{2{\sigma}^2(\bX)}\right)\right| \\ \leq   \dfrac{\sqrt{s}}{\sigma_0\sqrt{2\pi{\hat{\sigma}^2(\bX)}}}
\exp\left(-\dfrac{(y-f^*(\bX))^2}{2{\hat{\sigma}}^2(\bX)}\right) Cs(y-f^*(\bX))^2\left|\hat{\sigma}^2(\bX)-\sigma^2(\bX)\right|\enspace .
\end{multline}
Therefore, from Equation~\eqref{eq:eqAux1}, and~\eqref{eq:eqAux2}, we get
\begin{multline}
\label{eq:eq3}
\int_{\mathbb{R}} \left|\dfrac{1}{\sqrt{2\pi{\sigma^2(\bX)}}} \exp\left(-\dfrac{(y-{f}^*(\bX))^2}{2\hat{\sigma}^2(\bX)}\right) -  \dfrac{1}{\sqrt{2\pi{\sigma^2(\bX)}}} \exp\left(-\dfrac{(y-f^*(\bX))^2}{2{\sigma}^2(\bX)}\right)\right| dy \\ \leq C s^{5/2} \left|\hat{\sigma}^2(\bX)-\sigma^2(\bX)\right|\enspace,
\end{multline}
where we used the fact that the integral \wrt $y$ is the variance of Gaussian r.v. with variance $\hat{\sigma}^2(\bX)$ and is then such that
 $ \int_{\mathbb{R}} \dfrac{1}{\sqrt{2\pi{\hat{\sigma}^2(\bX)}}}
\exp\left(-\dfrac{(y-f^*(\bX))^2}{2{\hat{\sigma}}^2(\bX)}\right) (y-f^*(\bX))^2  dy = \hat{\sigma}^2(\bX)  \leq s$.
The combination of Equations~\eqref{eq:eq1},~\eqref{eq:eq2}, and~\eqref{eq:eq3} yields the result.
\end{proof}

Now, we provide the proof of Theorem~\ref{thm:consistance}.

\begin{proof}[Proof of Theorem~\ref{thm:consistance}]

We prove the consistency $\hat{\Gamma}$ {\it w.r.t.}
the symmetric difference distance $\mathcal{H}$.
We have that
\begin{equation}
\label{eq:eqDecompHamming0}
\mathcal{H}(\hat{\Gamma}) \leq \mathbb{E}\left[\int_{\hat{\Gamma}(\bX,\zeta) \triangle \bar{\Gamma}(\bX,\zeta) } dy \right] + \mathbb{E}\left[\int_{\bar{\Gamma}(\bX,\zeta) \triangle \Gamma^*(\bX)} dy\right] \enspace .    
\end{equation}
We bound the first term in the {\it r.h.s.} in 
the above inequality.
\begin{multline*}
\mathbb{E}\left[\int_{\hat{\Gamma}(\bX,\zeta) \triangle \bar{\Gamma}(\bX,\zeta) } dy \right]
 = \mathbb{E}\left[\int_{\mathbb{R}} \left|\one_{\{y \in \hat{\Gamma}(\bX,\zeta)\}}-\one_{\{y \in \bar{\Gamma}(\bX,\zeta)\}}\right|\right]
\\= \mathbb{E}\left[\int_{\mathbb{R}}\bigg|\one_{\{\hat{G}(\hat{p}(y|\bX,\zeta)) \leq \ell\}}-\one_{\{\bar{G}(\hat{p}(y|\bX,\zeta)) \leq \ell\}}\bigg|dy\right]\enspace .
\end{multline*}
Therefore, from Equations~\eqref{eq:decompSize} and~\eqref{eq:sizeProbRate}, we deduce
\begin{equation}
\label{eq:eqdecompHamming1}
\mathbb{E}\left[\int_{\hat{\Gamma}(\bX,\zeta) \triangle \bar{\Gamma}(\bX,\zeta) } dy \right] \leq  \dfrac{Cs}{\sqrt{N}}\enspace.
\end{equation}
Now, we study the second term in the {\it r.h.s.}
of Equation~\eqref{eq:eqDecompHamming0}.
We observe that if $y \in \bar{\Gamma}(\bX, \zeta) \setminus \Gamma_{\ell}^*(\bX)$ the following holds
\begin{itemize}
    \item on the event $\{\bar{G}^{-1}(\ell) \geq G^{-1}(\ell)\}$, $\left|\hat{p}(y|\bX,\zeta) -p(y|\bX)\right|\geq \left|p(y|\bX)-G^{-1}(\ell) \right| ,$ 
    \item on the event $\{\bar{G}^{-1}(\ell) < G^{-1}(\ell)\}$, 
    \begin{equation*}
    {\rm either} \;\; \hat{p}(y|\bX,\zeta) \in (\bar{G}^{-1}(\ell), G^{-1}(\ell)) \;\;
    {\rm or} \;\; \left|\hat{p}(y|\bX,\zeta) -p(y|\bX)\right|\geq \left|p(y|\bX)-G^{-1}(\ell) \right|\enspace. 
\end{equation*}    
\end{itemize}
Note that similar reasoning holds if $y \in  \Gamma_{\ell}^*(\bX) \setminus \bar{\Gamma}(\bX, \zeta)$. 
Therefore, we deduce that conditional on $\mathcal{D}_n$,
\begin{multline*}
\mathbb{E}\left[\int_{\bar{\Gamma}(\bX,\zeta) \triangle \Gamma^*(\bX)} dy\right] \leq 
\mathbb{E}\left[\int_{\mathbb{R}}\one_{\{\left|\hat{p}(y|\bX,\zeta) -p(y|\bX)\right|\geq  \left|p(y|\bX)-G^{-1}(\ell) \right|\}} dy \right] \\+ \one_{\{\bar{G}^{-1}(\ell) < G^{-1}(\ell)\}} \mathbb{E}\left[\int_{\mathbb{R}} \one_{\{\hat{p}(y|\bX,\zeta) \in (\bar{G}^{-1}(\ell), G^{-1}(\ell))\}} dy\right] \\+ 
\one_{\{\bar{G}^{-1}(\ell) \geq G^{-1}(\ell)\}} \mathbb{E}\left[\int_{\mathbb{R}} \one_{\{\hat{p}(y|\bX,\zeta) \in ({G}^{-1}(\ell), \bar{G}^{-1}(\ell))\}} dy\right]\enspace.
\end{multline*}
Using first the definition of $\bar{G}$ and then the fact that $\bar{G}(\bar{G}^{-1}(\ell)) = G(G^{-1}(\ell)) = \ell $ in this last inequality, we deduce the following
\begin{multline*}
\mathbb{E}\left[\int_{\bar{\Gamma}(\bX,\zeta) \triangle \Gamma^*(\bX)} dy\right] \leq 
\mathbb{E}\left[\int_{\mathbb{R}}\one_{\{\left|\hat{p}(y|\bX,\zeta) -p(y|\bX)\right|\geq  \left|p(y|\bX)-G^{-1}(\ell) \right|\}} dy \right] \\+
\mathbb{E}\left[\left|G(G^{-1}(\ell)) - \bar{G}(G^{-1}(\ell)) \right|\right]\enspace.
\end{multline*}
Now, we observe that
\begin{multline*}
\mathbb{E}\left[\left|G(G^{-1}(\ell)) - \bar{G}(G^{-1}(\ell)) \right|\right] \leq \mathbb{E}\left[\int_{\mathbb{R}} \left|\one_{\{p(y|\bX) \geq G^{-1}(\ell)\}} - \one_{\{\hat{p}(y|\bX, \zeta) \geq G^{-1}(\ell)\}}\right| dy\right]\\
\leq \mathbb{E}\left[\int_{\mathbb{R}} \one_{\{\left|\hat{p}(y|\bX,\zeta) -p(y|\bX)\right|\geq  \left|p(y|\bX)-G^{-1}(\ell) \right|\}} dy\right]\enspace.
\end{multline*}
Therefore, we have obtained
\begin{eqnarray}
\label{eq:eqDecompHamming2}
\mathbb{E}\left[\int_{\bar{\Gamma}(\bX,\zeta) \triangle \Gamma^*(\bX)} dy\right] &\leq& 2
\mathbb{E}\left[\int_{\mathbb{R}}\one_{\{\left|\hat{p}(y|\bX,\zeta) -p(y|\bX)\right|\geq  \left|p(y|\bX)-G^{-1}(\ell) \right|\}} dy \right] \\ \nonumber
& \leq & 2 \int_{\mathbb{R}} \mathbb{P}\left(\left|\hat{p}(y|\bX,\zeta) -p(y|\bX)\right|\geq  \left|p(y|\bX)-G^{-1}(\ell) \right|\right) dy \enspace.
\end{eqnarray}
Let us consider the term in the {\it r.h.s} of Equation~\eqref{eq:eqDecompHamming2}. 
Let $\delta > 0$, we have that
\begin{multline*}
\int_{\mathbb{R}} \mathbb{P}\left(\left|\hat{p}(y|\bX,\zeta) -p(y|\bX)\right|\geq  \left|p(y|\bX)-G^{-1}(\ell) \right|\right) dy \leq \int_{\mathbb{R}}\mathbb{P}\left(\left|\hat{p}(y|\bX,\zeta) -p(y|\bX)\right|\geq \delta\right) dy\\ + \int_{\mathbb{R}}  \mathbb{P}\left(\left|p(y|\bX)-G^{-1}(\ell)\right| \leq  \delta\right)  dy \enspace .  
\end{multline*}
From Markov's inequality, we deduce
\begin{multline}
\label{eq:eqDecompHamming3}
\int_{\mathbb{R}} \mathbb{P}\left(\left|\hat{p}(y|\bX,\zeta) -p(y|\bX)\right|\geq  \left|p(y|\bX)-G^{-1}(\ell) \right|\right) dy \leq   \dfrac{1}{\delta} \mathbb{E}\left[\int_{\mathbb{R}}\left|\hat{p}(y|\bX,\zeta) -p(y|\bX)\right|dy\right] \\+ G(G^{-1}(\ell)-\delta)-G(G^{-1}(\ell)+\delta) \enspace.
\end{multline}
Since $\hat{p}$ is supported on $[-s,s]$, we observe that
\begin{multline}
\label{eq:decompL1Risk}
\mathbb{E}\left[\int_{\mathbb{R}}\left|\hat{p}(y|\bX,\zeta)-p(y|\bX)\right|{\rm d}y\right] 
  = 
\mathbb{E}\left[\int_{[-s,s]}\left|\hat{p}(y|\bX, \zeta)-p(y|\bX)\right|{\rm d}y\right] +
\mathbb{E}\left[\int_{|y| \geq s}^{+\infty}p(y|\bX) {\rm d}y\right]\\
 \leq  \mathbb{E}\left[\int_{[-s,s]}\left|\hat{p}(y|\bX)-p(y|\bX)\right|{\rm d}y\right]+ 2su + \mathbb{E}\left[\int_{|y| \geq s}^{+\infty}p(y|\bX) {\rm d}y\right]\enspace.
\end{multline}
Now, we observe that
\begin{equation*}
\mathbb{E}\left[\int_{s}^{+\infty}p(y|\bX) {\rm d}y\right]
= \mathbb{E}\left[\one_{\{|f^*(X)|\leq \frac{s}{2}\}}\int_{s}^{+\infty}p(y|\bX) {\rm d}y\right]+
\mathbb{E}\left[\one_{\{|f^*(X)|> \frac{s}{2}\}}\int_{s}^{+\infty}p(y|\bX) {\rm d}y\right]\enspace.
\end{equation*}
From Markov inequality and Assumption~\ref{ass:assFstar}, the second term of the r.h.s. in the above inequality is bounded by
\begin{equation*}
\mathbb{E}\left[\one_{\{|f^*(\bX)|> \frac{s}{2}\}}\int_{s}^{+\infty}p(y|\bX) {\rm d}y\right] \leq \mathbb{E}\left[\one_{\{|f^*(\bX)|> \frac{s}{2}\}}\right] \leq
\dfrac{2C_1}{s} \enspace.
\end{equation*}
On the other hand, we observe that
\begin{multline*}
\mathbb{E}\left[\one_{\{|f^*(\bX)|\leq \frac{s}{2}\}}\int_{s}^{+\infty}p(y|\bX) {\rm d}y\right]
\leq \mathbb{E}\left[\int_s^{+\infty} \dfrac{1}{\sqrt{2\pi\sigma^2(\bX)}} \exp\left(-\dfrac{(y-s/2)^2}{2\sigma^2(\bX)}\right) {\rm d}y\right] \\
= \mathbb{E}\left[\int_{s/2}^{+\infty} \dfrac{1}{\sqrt{2\pi\sigma^2(\bX)}} \exp\left(-\dfrac{y^2}{2\sigma^2(\bX)}\right) {\rm d}y\right]\enspace.
\end{multline*}
Therefore, Assumption~\ref{ass:assSigma} and standard result on Gaussian tails yields for $s \geq 1$
\begin{equation*}
\mathbb{E}\left[\one_{\{|f^*(\bX)|\leq \frac{s}{2}\}}\int_{s}^{+\infty}p(y|\bX) {\rm d}y\right]
\leq \dfrac{1}{\sqrt{2\pi}}\exp\left(- \dfrac{s^2}{8\sigma_1^2}\right)\enspace.
\end{equation*}
Hence combining the above inequalities, we get for $s \geq 1$
\begin{equation*}
\mathbb{E}\left[\int_{s}^{+\infty}p(y|\bX) {\rm d}y\right] \leq C'\left[\exp\left(-\frac{s^2}{8\sigma_1^2}\right) + \frac{C}{s}\right]\enspace,  
\end{equation*}
where $C$ and $C'$ are two positive constants. Note that similar arguments yields
\begin{equation*}
 \mathbb{E}\left[\int_{-\infty}^{-s}p(y|\bX) {\rm d}y\right] \leq C'\left[\exp\left(-\frac{s^2}{8\sigma_1^2}\right) + \frac{C}{s}\right]\enspace. 
\end{equation*}
Therefore considering Equation~\eqref{eq:decompL1Risk} and 
Proposition~\ref{prop:propEstim} and defining $s = \log(\min(n,N))$, we get 
\begin{equation*}
\lim_n    \dfrac{1}{\delta} \mathbb{E}\left[\int_{\mathbb{R}}\left|\hat{p}(y|\bX,\zeta) -p(y|\bX)\right|dy\right] = 0 \enspace.
\end{equation*}    
Hence we obtain from Equation~\eqref{eq:eqDecompHamming3} that for all $\delta > 0$
\begin{equation*}
\limsup_n \int_{\mathbb{R}} \mathbb{P}\left(\left|\hat{p}(y|\bX,\zeta) -p(y|\bX)\right|\geq  \left|p(y|\bX)-G^{-1}(\ell) \right|\right) dy \leq G(G^{-1}(\ell)-\delta)-G(G^{-1}(\ell)+\delta)\enspace.
\end{equation*}
Since $G$ is continuous, with $\delta \rightarrow 0$,
we get
\begin{equation*}
\lim_n \int_{\mathbb{R}} \mathbb{P}\left(\left|\hat{p}(y|\bX,\zeta) -p(y|\bX)\right|\geq  \left|p(y|\bX)-G^{-1}(\ell) \right|\right) dy  = 0 \enspace. 
\end{equation*}
The above equation together with  Equation~\eqref{eq:eqDecompHamming0}, ~\eqref{eq:eqdecompHamming1},~\eqref{eq:eqDecompHamming2} yields the desired result.
\end{proof}

\subsection{Rates of convergence}
We start this section with a result on the estimation error of $\hat{p}$
\wrt the sup-norm. 
\begin{proposition}  
\label{prop:supNormEstimator} 
Let $s = \log(\min(n,N))$.
Under Assumptions~\ref{ass:assSigma},~\ref{ass:regularity}, and \ref{ass:StrongDensityAssumption}, we have that
\begin{multline*}
\sup_{(\bx,y) \in \mathcal{C} \times [-s,s]} \left|\hat{p}(y|\bx)-p(y|\bx)\right|
\leq \\ C\left(s\sup_{\bx \in \mathcal{C}}\left(\hat{f}(\bx)-f^*(\bx)\right)^2+ s^{2} \sup_{\bx \in \mathcal{C}}\left|\hat{f}(\bx)-f^*(\bx)\right|+ s^3\sup_{\bx \in \mathcal{C}}\left|\hat{\sigma}^2(\bx) -\sigma^2(\bx)\right|\right) \enspace,
\end{multline*}
where $C > 0$ is a constant which depends on $f^*$, $\sigma^2$, and on the set $\mathcal{C}$.
\end{proposition}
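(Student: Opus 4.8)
The plan is to reproduce, pointwise in $\bx\in\mathcal{C}$, the three-term triangle-inequality decomposition of $\left|\hat p(y|\bx)-p(y|\bx)\right|$ used in the proof of Proposition~\ref{prop:propEstim}, and then to bound the supremum over $(\bx,y)\in\mathcal{C}\times[-s,s]$ rather than the integral over $y\in\mathbb{R}$. On $[-s,s]$ the estimator of Equation~\eqref{eq:EstimateurSueille} is exactly the Gaussian density with mean $\hat f(\bx)$ and variance $\hat\sigma^2(\bx)$, so the triangle inequality produces three terms: (i) the term replacing $\sigma^2(\bx)$ by $\hat\sigma^2(\bx)$ in the normalising constant only; (ii) the term replacing $\hat f(\bx)$ by $f^*(\bx)$ in the exponent; (iii) the term replacing $\hat\sigma^2(\bx)$ by $\sigma^2(\bx)$ in the exponent. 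I would bound each of these and then take $\sup_{\bx\in\mathcal{C}}$.

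The elementary inputs are those already used in the proof of Proposition~\ref{prop:propEstim}: $1/s\le\hat\sigma^2(\bx)\le s$ by construction of $\hat\sigma$, $\sigma_0\le\sigma(\bx)\le\sigma_1$ by Assumption~\ref{ass:assSigma}, the $1$-Lipschitzness of $x\mapsto e^{-x}$ on $\mathbb{R}_+$, the identity $(y-\hat f)^2-(y-f^*)^2=(\hat f-f^*)^2+2(y-f^*)(f^*-\hat f)$, and $|\sigma(\bx)-\hat\sigma(\bx)|\le C|\sigma^2(\bx)-\hat\sigma^2(\bx)|$. Two changes are needed to pass from $L_1$ to sup-norm: (a) every Gaussian normalising constant $1/\sqrt{2\pi\hat\sigma^2(\bx)}$ is bounded deterministically by $\sqrt{s/2\pi}$ and every $e^{-(\cdot)}$ factor by $1$, instead of exploiting that a Gaussian density integrates to $1$; and (b) since $\mathcal{C}$ is compact and $f^*$ is Lipschitz (Assumption~\ref{ass:regularity}), $f^*$ is bounded on $\mathcal{C}$, hence for $|y|\le s$ and $s\ge 1$ one has $|y-f^*(\bx)|\le s+\sup_{\bx\in\mathcal{C}}|f^*(\bx)|\le Cs$; this replaces the Gaussian-moment identities $\int|y-f^*|\,p=O(\hat\sigma)$ and $\int(y-f^*)^2p=\hat\sigma^2$ used in the $L_1$ argument. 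It is here that the dependence of the final constant on $f^*$ and $\mathcal{C}$ enters, while the dependence on $\sigma^2$ enters through $\sigma_0$.

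Then the bookkeeping runs as follows. Term (i): from the bound in~\eqref{eq:eq1bis} one gets $\le\sqrt{s/2\pi}\,|\sigma(\bx)-\hat\sigma(\bx)|\le C\sqrt s\,|\sigma^2(\bx)-\hat\sigma^2(\bx)|$. Term (ii): from the bound of~\eqref{eq:eq2bis}, namely a factor $\tfrac{C}{\sqrt{2\pi\hat\sigma^2}\,\hat\sigma}e^{-(y-f^*)^2/2\hat\sigma^2}$ times $\bigl((\hat f-f^*)^2+|y-f^*|\,|\hat f-f^*|\bigr)$, one uses $1/(\sqrt{2\pi\hat\sigma^2}\,\hat\sigma)=1/(\sqrt{2\pi}\hat\sigma^2)\le Cs$, $e^{-(\cdot)}\le1$ and $|y-f^*|\le Cs$ to obtain $\le Cs(\hat f(\bx)-f^*(\bx))^2+Cs^2|\hat f(\bx)-f^*(\bx)|$. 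Term (iii): from the bounds of~\eqref{eq:eqAux1}--\eqref{eq:eqAux2}, which are of the form $Cs(y-f^*)^2|\hat\sigma^2-\sigma^2|$ times a Gaussian density, one bounds that density directly by $O(1)$ using $\sigma\ge\sigma_0$ and uses $(y-f^*)^2\le Cs^2$, obtaining $\le Cs^3|\hat\sigma^2(\bx)-\sigma^2(\bx)|$. Taking $\sup_{\bx\in\mathcal{C}}$ of the sum of the three bounds and absorbing the $\sqrt s$ of term (i) into the $s^3$ term (valid for $s\ge 1$) gives the announced inequality.

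No genuine obstacle is expected: the one subtlety is the power of $s$ in term (iii). The $L_1$ proof routes the subcase $\sigma^2(\bx)\le\hat\sigma^2(\bx)$ through the prefactor estimate $1/\sqrt{2\pi\sigma^2}\le\sqrt s/(\sigma_0\sqrt{2\pi\hat\sigma^2})$ of~\eqref{eq:eqAux2}, which costs an extra $\sqrt s$ that is recovered by the Gaussian-variance identity after integration; since there is no integration here, one must instead keep $1/\sqrt{2\pi\sigma^2}\le1/(\sqrt{2\pi}\sigma_0)=O(1)$, which is precisely what keeps term (iii) at order $s^3$ and not $s^4$. Everything else is routine tracking of constants and powers of $s$.
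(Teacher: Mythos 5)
Your proposal is correct and follows essentially the same route as the paper, which also reuses the three-term decomposition from Proposition~\ref{prop:propEstim} and converts it to a sup-norm bound via $\hat{\sigma}^2(\bx)\in[1/s,s]$, $\sigma(\bx)\geq\sigma_0$, boundedness of $f^*$ on the compact $\mathcal{C}$, and $|y|\leq s$. Your handling of the third term (keeping the prefactor bounded by $1/(\sqrt{2\pi}\sigma_0)$ rather than reusing the $\sqrt{s}/\sqrt{2\pi\hat{\sigma}^2(\bX)}$ rewriting of~\eqref{eq:eqAux2}) is exactly what is needed to land on $s^3$, a point the paper passes over silently.
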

\begin{proof}
We consider the same decomposition into $3$ that we used in the proof of Proposition~\ref{prop:propEstim}. 
Using the fact that 
$\hat{\sigma}(\bx) \geq \frac{1}{\sqrt{s}}$ and Assumption~\ref{ass:assSigma}, we get for all $\bx \in \mathcal{C}$, and $y\in [-s,s]$ (\emph{c.f.,} Eq.~\eqref{eq:eq1bis}), the first term is controlled as follows:
\begin{equation}
\label{eq:eqsupnorm1}
\left|\dfrac{1}{\sqrt{2\pi\hat{\sigma}^2(\bx)}} \exp\left(-\dfrac{(y-\hat{f}(\bx))^2}{2\hat{\sigma}^2(\bx)}\right)   -  \dfrac{1}{\sqrt{2\pi{\sigma^2(\bx)}}} \exp\left(-\dfrac{(y-\hat{f}(\bx))^2}{2\hat{\sigma}^2(\bx)}\right)\right| \leq 
C s \sup_{x \in \mathcal{C}} \left|\hat{\sigma}^2(\bx) -\sigma^2(\bx)\right| \enspace.
\end{equation}
According to the second term, from Assumptions~\ref{ass:regularity} and~\ref{ass:StrongDensityAssumption}, and since $f^*$ is a Lipschitz function on the compact $\mathcal{C}$, we have that $\left|f^*(\bx) \right|\leq s$ for $n,N$ large enough. Therefore,
using the fact that $x \mapsto \exp(-x)$ is 1-Lipschitz on $\mathbb{R}_{+}$ and that $\frac{1}{s} \leq \hat{\sigma}^2(\bx)$, we get (\emph{c.f.,} Eq.~\eqref{eq:eq2bis})
\begin{multline}
\label{eq:eqsupnorm2}
\left|\dfrac{1}{\sqrt{2\pi{\sigma^2(\bx)}}} \exp\left(-\dfrac{(y-\hat{f}(\bx))^2}{2\hat{\sigma}^2(\bx)}\right) -  \dfrac{1}{\sqrt{2\pi{\sigma^2(\bx)}}} \
\exp\left(-\dfrac{(y-f^*(\bx))^2}{2{\hat{\sigma}}^2(\bx)}\right)\right|  \leq\\
C\left(s\sup_{\bx \in \mathcal{C}}\left(\hat{f}(\bx)-f^*(\bx)\right)^2+ s^{2} \sup_{\bx \in \mathcal{C}}\left|\hat{f}(\bx)-f^*(\bx)\right| \right) \enspace .  
\end{multline}
Finally, considering the last term, we deduce from~\eqref{eq:eqAux1} and~\eqref{eq:eqAux2} that
\begin{equation}
\label{eq:eqsupnorm3}
\left|\dfrac{1}{\sqrt{2\pi{\sigma^2(\bx)}}} \exp\left(-\dfrac{(y-{f}^*(\bx))^2}{2\hat{\sigma}^2(\bx)}\right) -  \dfrac{1}{\sqrt{2\pi{\sigma^2(\bx)}}} \exp\left(-\dfrac{(y-f^*(\bx))^2}{2{\sigma}^2(\bx)}\right)\right|\leq Cs^3 \sup_{\bx \in \mathcal{C}}\left|\hat{\sigma}^2(\bx) -\sigma^2(\bx)\right|\enspace.
\end{equation}
The combination of Equations~\eqref{eq:eqsupnorm1},~\eqref{eq:eqsupnorm2}, and~\eqref{eq:eqsupnorm3} gives the proposition.
\end{proof}

\begin{proof}[Proof of Proposition~\ref{prop:excessRiskUnderMargin}]
We recall that
\begin{equation*}
\mathcal{E}_{\ell}\left(\bar{\Gamma}\right) =\mathbb{E}\left[\int_{\bar{\Gamma}(\bX, \zeta) \triangle \Gamma^*_{\ell}(\bX)} \left|p(y|\bX)-\lambda^*_{\ell} \right| dy\right]\enspace. \end{equation*}
Now, we observe that for $y \in \bar{\Gamma}(\bX, \zeta) \triangle \Gamma^*_{\ell}(\bX)$
\begin{equation*}
\left|p(y|\bX)-\lambda^*_{\ell} \right| \leq \left|\hat{p}(y|\bX,\zeta)- p(y|\bX)\ \right| + \left|\bar{\lambda}_{\ell} - \lambda^*_{\ell}\right|\enspace,
\end{equation*}
where we recall that $\bar{\lambda}_{\ell} := \bar{G}^{-1}(\ell)$, with $\bar{G}$ defined in Eq.~\eqref{eq:pseudoOracle}.
Using similar arguments as those used in the proof of Theorem 4.4 in~\cite{Denis_Hebiri_Zaoui20} that is inspired by Theorem 2.12 in~\cite{Bobkov_Ledoux14}, 
it is not difficult to see that conditional on $\mathcal{D}_n$\enspace,
\begin{equation*}
 \left|\bar{\lambda}_{\ell} - \lambda^*_{\ell}\right| \leq \sup_{(\bx,y) \in \mathcal{C} \times \mathbb{R}} \left|\hat{p}(y|\bx)-p(y|\bx) \right| + u := \hat{m}(u)\enspace.
\end{equation*}
Therefore, we deduce that
\begin{equation*}
\mathbb{E}\left[\int_{\bar{\Gamma}(\bX, \zeta) \triangle \Gamma^*_{\ell}(\bX)} \left|p(y|\bX)-\lambda^*_{\ell} \right| dy \right]
\leq 2\hat{m}(u) \mathbb{E}\left[\int_{\mathbb{R}} \one_{\{\left|p(y|\bX)-\lambda^*_{\ell} \right| \leq 2\hat{m}(u)\}} {\rm d}y\right] \enspace .
\end{equation*}
Hence from the above inequality, and Assumption~\ref{ass:marginAss} we get
\begin{equation*}
\mathbb{E}\left[\mathcal{E}_{\ell}\left(\bar{\Gamma}\right)\right] \leq 2^{1+\alpha}c_0\mathbb{E}\left[\hat{m}(u)^{1+\alpha}\right] \enspace .   
\end{equation*}
Therefore, from Equations~\eqref{eq:excessRiskFirstdecomp} and~\eqref{eq:diffRiskR}, we obtain the following with $s  = \log(\min(n,N))$
\begin{equation*}
\mathbb{E}\left[\mathcal{E}_\ell \left(\hat{\Gamma}\right)\right] \leq C\left( \mathbb{E}\left[\left(\sup_{(x,y) \in \mathbb{R} \times \mathcal{C}} \left|\hat{p}(y|\bx)-p(y|\bx) \right|\right)^{1+\alpha}\right] + u^{1+\alpha} + \dfrac{\log(N)}{N}\right) \enspace
.    
\end{equation*}
Finally, since $\hat{p}$ is supported on $[-s,s]$, we have
\begin{equation*}
\sup_{(\bx,y) \in \mathcal{C} \times \mathbb{R}} \left|\hat{p}(y|\bx)-p(y|\bx) \right|  \leq  \sup_{(\bx,y) \in \mathcal{C} \times [-s,s]} \left|\hat{p}(y|\bx)-p(y|\bx) \right|
+ \sup_{(\bx,y) \in \mathcal{C} \times \mathbb{R}\setminus[-s,s]} p(y|\bx) \enspace .
\end{equation*}
For $n,N$ large enough, we can assume, since $f^*$ is bounded,  that $\left|f^*(\bX)\right| \leq s/2$. From Assumption~\ref{ass:assSigma}, we have for $n,N$  large enough 
\begin{equation*}
 \sup_{(\bx,y) \in \mathcal{C} \times \mathbb{R}\setminus[-s,s]} p(y|\bx) \leq C\exp\left(-\frac{s^2}{8\sigma_1^2}\right) \leq \exp(-s) \leq \frac{C}{\min(n,N)}\enspace, 
 \end{equation*}
which yields the desired result.

\end{proof}

\begin{proof}[Proof of Theorem~\ref{thm:RatesOfcve}]

The proof is a straightforward application of Proposition~\ref{prop:excessRiskUnderMargin}, ~\ref{prop:supNormEstimator}, and Theorem~\ref{thm:cveKnn}, where we also use the fact that for $n,N$ large enough
\begin{equation*}
\left|\hat{\sigma}^2(\bx)-\sigma^2(\bx) \right| \leq   \left|\tilde{\sigma}^2(\bx)-\sigma^2(\bx) \right| \enspace. 
\end{equation*}
\end{proof}

\end{document}